\newtheorem{theorem}{Theorem}[section]
\newtheorem{claim}[theorem]{Proposition}
\newtheorem{lemma}[theorem]{Lemma}
\newtheorem{corollary}[theorem]{Corollary}
\begin{document}
\title{Derivations on ideals in commutative $AW^*$-algebras}

\author{V. I. Chilin }
\address{Department of Mathematics, National University of Uzbekistan,
Vuzgorodok, 100174, Tashkent, Uzbekistan}
\email{chilin@ucd.uz, vladimirchil@gmail.com}

\author{G. B. Levitina }
\address{Department of Mathematics, National University of Uzbekistan,
Vuzgorodok, 100174, Tashkent, Uzbekistan}
\email{bob\_galina@mail.ru}

\date{\today}
\begin{abstract}
Let $\mathcal{A}$ be a commutative $AW^*$-algebra, let $S(\mathcal{A})$ be the $*$-algebra of all measurable operators affiliated with $\mathcal{A}$, let $\mathcal{I}$ be an ideal in $\mathcal{A}$, let $s(\mathcal{I})$ be the support of the ideal $\mathcal{I}$ and let $\mathbb{Y}$ be a solid subspace in $S(\mathcal{A})$. The necessary and sufficient conditions of existence of non-zero band preserving derivations from $\mathcal{I}$ to $\mathbb{Y}$ are given. We show that, in case when $\mathbb{Y}\subset\mathcal{A}$, or $\mathbb{Y}$ is a quasi-normed solid space, any band preserving derivation from $\mathcal{I}$ into
$\mathbb{Y}$ is always trivial. At the same time, there exist non-zero band preserving derivations from $\mathcal{I}$ with values in  $S(\mathcal{A})$, if and only if the Boolean algebra of all projections from the $AW^*$-algebra $s(\mathcal{I})\mathcal{A}$ is not $\sigma$-distributive.
\end{abstract}
\maketitle
{\small Keywords: Boolean algebra, commutative $AW^*$-algebra, ideal, derivation.

\phantom{A}

\small Mathematics Subject Classification: 46A40, 46B40, 46L57

\phantom{A}

}

\newcounter{gl}

\section{Introduction}
It is well known (\cite{Sak}, Lemma 4.1.3) that every derivation on a $C^*$-algebra is norm continuous and, when a $C^*$-algebra $\mathcal{A}$ is commutative, any derivation on $\mathcal{A}$ is trivial. In particular, for a commutative  von Neumann algebra and a commutative $AW^*$-algebra $\mathcal{A}$ any derivation $\delta\colon\mathcal{A}\to\mathcal{A}$ is identically zero. Development of the theory of algebras $S(\mathcal{A})$ of measurable operators affiliated with a von Neumann algebra or a $AW^*$-algebra $\mathcal{A}$ (\cite{Berberian}, \cite{Seg}) allowed to construct and  study the new significant examples of  $*$-algebras of unbounded operators. One of interesting problems in this theory is the problem of description of derivations acting in  $S(\mathcal{A})$. For a von Neumann algebra and an $AW^*$-algebra $\mathcal{A}$ it is known (\cite{Olesen}, (\cite{Sak}, 4.1.6)) that any derivation $\delta\colon\mathcal{A}\to\mathcal{A}$ is inner, i.e. it has a form $\delta(x)=[a,x]=ax-xa$ for some $a\in\mathcal{A}$ and all $x\in\mathcal{A}$. For the algebra $S(\mathcal{A})$ it is not the same. In case of a commutative von Neumann algebra $\mathcal{A}$ in  \cite{B-Ch-S-matzam},\cite{B-Ch-S} it was established that any derivation on $S(\mathcal{A})$ is inner, i.e. trivial, if and only if $\mathcal{A}$ is an atomic algebra. For a commutative  $AW^*$-algebra $\mathcal{A}$ there are non-zero derivations on $S(\mathcal{A})$, if and only if the Boolean algebra of all projections from $\mathcal{A}$ is not $\sigma$-distributive \cite{Kusraev}. In case of a von Neumann algebra $\mathcal{A}$ of type $I$, all derivations from $S(\mathcal{A})$ into $S(\mathcal{A})$ are described in \cite{AAK}. The next step in the study of properties of derivations in operator algebras has become the research of derivations acting on an ideal in a von Neumann algebra $\mathcal{A}$ with values in a Banach solid space in $S(\mathcal{A})$ \cite{B-S_d}. In particular, in \cite{B-S_d} it is proven that any derivation from a commutative von Neumann algebra $\mathcal{A}$ with values in a Banach solid space  $\mathbb{Y}\subset S(\mathcal{A})$ is always trivial.

In this paper we consider derivations acting on an ideal $\mathcal{I}$ in a commutative $AW^*$-algebra $\mathcal{A}$ (respectively, in the algebra $C(Q)$ of all continuous real-valued functions defined on the Stone space $Q$ of a complete Boolean algebra $\mathcal{B}$) with values in a solid space $\mathbb{Y}\subset S(\mathcal{A})$ (respectively, in $\mathbb{Y}\subset C_\infty(Q)$).

Denote by $s(x)$ the support of an element $x\in S(\mathcal{A})$ (respectively, $x\in C_\infty(Q)$) (see Section \ref{sec_prel}) and for any $E\subset S(\mathcal{A})$ (respectively, $E\subset C_\infty(Q)$) define the support of  $E$ by the equality
 $s(E)=\sup\{s(x):x\in E\}$. The support of the image $\delta(\mathcal{I})$ of a derivation $\delta\colon\mathcal{I}\to \mathbb{Y}$ is denoted by $s(\delta)$. A derivation $\delta$ is called band preserving (compare (\cite{G-K-K}, 1.1.1), (\cite{Kusraev}, 2.3)), if $s(\delta(x))\leqslant s(x)$ for all $x\in\mathcal{I}$. The inequality $s(\delta)\leqslant s(\mathcal{I})$ is a necessary and sufficient condition for a derivation $\delta\colon\mathcal{I}\to \mathbb{Y}$ to be band preserving (see Section \ref{sec_der}).

In this paper it is proven that there exits a non-zero band preserving derivation $\delta\colon\mathcal{I}\to \mathbb{Y}$ if and only if there exists a non-zero projection $e\in\mathcal{A}$ such that the following conditions hold:

1). $e\leqslant s(\mathcal{I})s(\mathbb{Y})$;

2). $e\mathbb{Y}=eS(\mathcal{A})$;

3). The Boolean algebra of projections from $e\mathcal{A}$ is not  $\sigma$-distributive.

Using this criterion it is established that in case, when $\mathbb{Y}\subset\mathcal{A}$ (respectively, $\mathbb{Y}\subset C(Q)$) or when $\mathbb{Y}$ is a quasi-normed solid space, any band preserving derivation  $\delta\colon\mathcal{I}\to \mathbb{Y}$ is always trivial.

At the same time, there exist non-zero band preserving derivations from $\mathcal{I}$ into  $S(\mathcal{A})$ (respectively,  in $C_\infty(Q)$), if and only if the Boolean algebra of all projections from $s(\mathcal{I})\mathcal{A}$ (respectively, $s(\mathcal{I})\mathcal{B}$) is not  $\sigma$-distributive  (compare \cite{Kusraev}). In particular, if $\mathcal{A}$ is a commutative von Neumann algebra, then any band preserving derivation  $\delta\colon\mathcal{I}\to S(\mathcal{A})$ is trivial, if and only if the algebra  $s(\mathcal{I})\mathcal{A}$ is atomic (compare \cite{B-Ch-S}).

\section{Preliminaries}\label{sec_prel}
Let $\mathcal{B}$ be an arbitrary complete Boolean algebra with zero $\mathbf{0}$ and unity $\mathbf{1}$. For an arbitrary non-zero $e\in\mathcal{B}$ set $\mathcal{B}_e=\{q\in\mathcal{B}:q\leqslant e\}$. With respect to the partial order induced from $\mathcal{B}$ the set $\mathcal{B}_e$ is a Boolean algebra with zero $\mathbf{0}$ and unity $e$.

A set $D\subset \mathcal{B}$ minorizes a subset $E\subset\mathcal{B}$ if for each non-zero $e\in E$ there exists $\mathbf{0}\neq q\in D$ such that $q\leqslant e$. We need the following important property of complete Boolean algebras.

\begin{theorem}[\cite{book-Kusraev}, 1.1.6]\label{principle}  Let $\mathcal{B}$ be a complete Boolean algebra, $\mathbf{0}\neq e\in\mathcal{B}$ and let $D$ minorizes the subset $\mathcal{B}_e$. Then there exists a disjoint subset $D_0\subset D$ such that $\sup D_0=e$.
\end{theorem}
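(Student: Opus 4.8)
The plan is to extract $D_0$ as a maximal pairwise-disjoint subfamily of $D$ lying under $e$, and then to use the minorization hypothesis together with completeness of $\mathcal{B}$ to force its supremum up to $e$. First I would pass to $\mathcal{D}=\{q\in D:\mathbf{0}\neq q\leqslant e\}$: since every witness $q\leqslant e'$ produced by the minorization of $\mathcal{B}_e$ automatically satisfies $q\leqslant e$ and is non-zero, the family $\mathcal{D}$ still minorizes $\mathcal{B}_e$, so it suffices to find the required $D_0$ inside $\mathcal{D}$, where all elements are bounded by $e$.

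Next I would order by inclusion the collection of all pairwise-disjoint subfamilies of $\mathcal{D}$, that is, those $F\subseteq\mathcal{D}$ with $p\wedge q=\mathbf{0}$ for distinct $p,q\in F$. The union of any chain of such subfamilies is again pairwise-disjoint, because disjointness is a condition on pairs and any two elements of the union already lie together in one member of the chain; hence Zorn's lemma yields a maximal pairwise-disjoint $D_0\subseteq\mathcal{D}$. By completeness of $\mathcal{B}$ the supremum $d=\sup D_0$ exists, and $d\leqslant e$ since every element of $D_0$ is $\leqslant e$.

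It remains to show $d=e$, which is where the argument really turns. Suppose $d\neq e$; then the relative complement $f=e\wedge\bar d$ (where $\bar d$ is the complement of $d$ in $\mathcal{B}$) is non-zero, because $d\vee f=e$, so $f=\mathbf{0}$ would force $d=e$; moreover $f\leqslant e$. Applying the minorization of $\mathcal{B}_e$ to $f$ gives some $\mathbf{0}\neq q\in\mathcal{D}$ with $q\leqslant f$. Then $q\wedge d=\mathbf{0}$, and since $p\leqslant d$ for each $p\in D_0$ we get $q\wedge p\leqslant q\wedge d=\mathbf{0}$; also $q\notin D_0$, as $q\leqslant d$ would force $q=q\wedge d=\mathbf{0}$. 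Thus $D_0\cup\{q\}$ is a strictly larger pairwise-disjoint subfamily, contradicting maximality. Hence $d=e$ and $D_0$ is the desired disjoint subset.

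I expect the only genuinely delicate point to be the verification that the union of a chain of pairwise-disjoint families stays pairwise-disjoint, so that Zorn's lemma applies; the rest is a routine complementation computation, invoking completeness only to guarantee that $\sup D_0$ exists.
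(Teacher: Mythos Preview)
Your argument is correct and is the standard Zorn's lemma proof of the exhaustion principle: restrict to elements of $D$ below $e$, pick a maximal pairwise-disjoint subfamily, and use minorization of the complementary part $e\wedge\bar d$ to contradict maximality if $\sup D_0\neq e$. Each step is sound, including the chain condition for Zorn and the complementation computation showing $f=e\wedge\bar d\neq\mathbf{0}$ when $d<e$.

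There is nothing to compare against in the paper itself: Theorem~\ref{principle} is merely \emph{quoted} from \cite{book-Kusraev}, 1.1.6, and no proof is given here. Your write-up is exactly the classical argument one finds for this result, so it is entirely appropriate.
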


A non-zero element $q$ from a Boolean algebra $\mathcal{B}$ is called an atom, if  $\mathcal{B}_e=\{\mathbf{0},q\}$. A Boolean algebra $\mathcal{B}$ is called atomic if, for every non-zero  $e\in \mathcal{B}$ there exists an atom $q\in \mathcal{B}$ such that $q\leqslant e$. Each complete atomic Boolean algebra is isomorphic to the Boolean algebra $2^\Delta$ of all subsets of the set $\Delta$ of all atoms in  $\mathcal{B}$ (\cite{Vl}, ch. \setcounter{gl}{3}\Roman{gl}, \S 2).

A Boolean algebra $\mathcal{B}$ is said to be continuous if $\mathcal{B}$ does not have atoms. If $\mathcal{B}$ is a complete Boolean algebra and  $\Delta$ is non-empty set of all atoms from $\mathcal{B}$, then for $e=\sup\Delta$ we have that $\mathcal{B}_e$ is an atomic Boolean algebra and $\mathcal{B}_{\mathbf{1}-e}$ is a continuous Boolean algebra.

Denote by $\mathbb{N}$ the set of all natural numbers and by $\mathbb{N^N}$ the set of all mappings from $\mathbb{N}$ into $\mathbb{N}$. A $\sigma$-complete Boolean algebra $\mathcal{B}$ is called \textit{$\sigma$-distributive}, if for any double sequence  $\{e_{n,m}\}_{n,m\in\mathbb{N}}$ in $\mathcal{B}$ the following condition holds
$$\bigvee\limits_{n\in\mathbb{N}}\bigwedge\limits_{m\in\mathbb{N}}e_{n,m}= \bigwedge\limits_{\varphi\in\mathbb{N^N}}\bigvee\limits_{n\in\mathbb{N}} e_{n,\varphi(n)}.$$
Each complete atomic Boolean algebra is $\sigma$-distributive (\cite{Kusraev},  3.1). In general, the converse does not hold. Moreover, there exist continuous $\sigma$-distributive complete Boolean algebras (\cite{book-Kusraev}, 5.1.7). At the same time, the following proposition holds.

\begin{claim}[\cite{G-K-K}, 5.3.3]\label{atomic-distrib}
If $\mathcal{B}$ is a continuous $\sigma$-complete Boolean algebra and there  exists a finite strictly positive countably additive measure $\mu$ on $\mathcal{B}$, then $\mathcal{B}$ is not $\sigma$-distributive.
\end{claim}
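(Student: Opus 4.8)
The plan is to argue directly, by exhibiting a single double sequence $\{e_{n,m}\}$ for which the identity defining $\sigma$-distributivity fails. Since the inequality $\bigvee_n \bigwedge_m e_{n,m} \leqslant \bigwedge_\varphi \bigvee_n e_{n,\varphi(n)}$ holds automatically for any family, it suffices to produce one family whose left-hand side is $\mathbf 0$ while its right-hand side is $\mathbf 1$. After normalizing $\mu(\mathbf 1)=1$, the whole construction will be driven by the measure: because $\mu$ is finite, strictly positive and countably additive while $\mathcal{B}$ is atomless, the values $\{\mu(f):f\leqslant e\}$ fill the whole interval $[0,\mu(e)]$ (the classical interval-range property of a non-atomic measure), so every non-zero $e$ splits into two disjoint non-zero halves of equal measure.

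First I would build a refining dyadic tree $\{c_s : s\in\{0,1\}^{<\omega}\}$ of non-zero elements: set $c_\varnothing=\mathbf 1$ and, given $c_s$, split it into disjoint $c_{s0},c_{s1}$ with $c_{s0}\vee c_{s1}=c_s$ and $\mu(c_{s0})=\mu(c_{s1})=\tfrac12\mu(c_s)$. Then $\mu(c_s)=2^{-|s|}$, and for each $n$ the level set $\{c_s:|s|=n\}$ is a finite partition of $\mathbf 1$, with level $n+1$ refining level $n$.

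The key step is to show that every "cell" $\bigwedge_n c_{t_n}$, obtained by choosing one block $c_{t_n}$ from each level $n$, equals $\mathbf 0$. If the selection is coherent, i.e. the $t_n$ lie along one branch $\beta\in\{0,1\}^{\omega}$, then $\bigwedge_n c_{\beta\restriction n}$ is a decreasing meet with $\mu(c_{\beta\restriction n})=2^{-n}\to 0$, so by continuity from above and strict positivity of $\mu$ this meet is $\mathbf 0$; if the selection is incoherent, then for some $n$ the block $c_{t_{n+1}}$ is not below $c_{t_n}$, hence lies below a different level-$n$ block and $c_{t_{n+1}}\wedge c_{t_n}=\mathbf 0$, so the whole meet is already $\mathbf 0$. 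This is the part I expect to be the main obstacle, since it is exactly where atomlessness must be used quantitatively, through the even splitting that forces the measures along every branch to vanish, rather than merely qualitatively.

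Finally I would set $e_{n,m}=c_s^{\perp}$ as $s$ runs over the $2^n$ nodes of level $n$, padding the remaining indices by $e_{n,m}=\mathbf 1$. Then $\bigwedge_m e_{n,m}=\bigl(\bigvee_{|s|=n}c_s\bigr)^{\perp}=\mathbf 1^{\perp}=\mathbf 0$ for every $n$, so the left-hand side of the distributive identity is $\mathbf 0$; while for every $\varphi$ one has $\bigvee_n e_{n,\varphi(n)}=\bigl(\bigwedge_n c_{t_n}\bigr)^{\perp}=\mathbf 0^{\perp}=\mathbf 1$ by the cell computation, so the right-hand side equals $\bigwedge_\varphi\mathbf 1=\mathbf 1$. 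As $\mathbf 0\neq\mathbf 1$, the identity fails and $\mathcal{B}$ is not $\sigma$-distributive. I note that all the genuinely uncountable lattice operations here collapse to meets or joins of copies of $\mathbf 0$ and $\mathbf 1$, so the argument uses only the $\sigma$-completeness assumed in the statement.
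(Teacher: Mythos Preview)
The paper does not give its own proof of this proposition: it is quoted verbatim from \cite{G-K-K}, 5.3.3, and used as a black box, so there is nothing in the paper to compare your argument against.

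Your argument itself is correct and is essentially the classical one. The dyadic halving is justified by the interval-range property of a non-atomic finite countably additive measure (Sierpi\'nski's theorem, which holds on any $\sigma$-complete Boolean algebra carrying such a measure); the branch meets vanish by continuity from above of $\mu$ together with strict positivity; and the complement trick at the end correctly converts the tree into a double sequence witnessing failure of the $\sigma$-distributive identity. Your remark that the padding indices and the final uncountable meet over $\varphi$ collapse to trivial joins/meets of $\mathbf 0$'s and $\mathbf 1$'s is also accurate, so only $\sigma$-completeness is used. One small clarification you might add: in the case $\varphi(n)>2^n$ for some $n$ the term $e_{n,\varphi(n)}=\mathbf 1$ already forces $\bigvee_n e_{n,\varphi(n)}=\mathbf 1$, so the cell computation is only needed when $\varphi(n)\leqslant 2^n$ for every $n$; you implicitly handle this, but stating it separates the two cases cleanly.
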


A complete Boolean algebra $\mathcal{B}$ is said to be multinormed, if the set of all
finite completely additive measures separates the points of $\mathcal{B}$ (\cite{book-Kusraev}, 1.2.9). If a Boolean algebra $\mathcal{B}$ is multinormed, then there exists a partition $\{e_i\}_{i\in I}$  of unity $\mathbf{1}$ such that the Boolean algebra $\mathcal{B}_{e_i}$ has a finite strictly positive countably additive measure for all $i\in I$ (\cite{book-Kusraev}, 1.2.10). Therefore, from Proposition \ref{atomic-distrib} we have the following

\begin{corollary}[\cite{G-K-K}, 5.3.3]\label{cor_atomic-distrib} A multinormed Boolean algebra $\mathcal{B}$ is  $\sigma$-distributive, if and only if $\mathcal{B}$ is an atomic Boolean algebra.
\end{corollary}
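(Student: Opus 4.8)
The plan is to prove the two implications separately, using that a multinormed Boolean algebra is complete by definition. Sufficiency is immediate: if $\mathcal{B}$ is atomic, then, being complete, it is $\sigma$-distributive by the fact recalled above that every complete atomic Boolean algebra is $\sigma$-distributive. So the whole content lies in necessity, which I would establish in contrapositive form, namely that a multinormed $\mathcal{B}$ which fails to be atomic must fail to be $\sigma$-distributive.

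First I would isolate a continuous piece of $\mathcal{B}$. Writing $\Delta$ for the set of atoms of $\mathcal{B}$ and $f=\mathbf{1}-\sup\Delta$ (with the convention $\sup\varnothing=\mathbf{0}$), the decomposition recalled before Proposition \ref{atomic-distrib} shows that $\mathcal{B}_f$ is continuous, and one checks easily that $\mathcal{B}$ is atomic precisely when $f=\mathbf{0}$. Thus non-atomicity produces a non-zero $f$ with $\mathcal{B}_f$ continuous. Next I would bring in multinormedness via the partition $\{e_i\}_{i\in I}$ of unity (\cite{book-Kusraev}, 1.2.10), on whose pieces $\mathcal{B}_{e_i}$ there live finite strictly positive countably additive measures $\mu_i$. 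Since $f=\sup_i(f\wedge e_i)\neq\mathbf{0}$, some index $i_0$ gives $g:=f\wedge e_{i_0}\neq\mathbf{0}$; then $\mathcal{B}_g$ is continuous (a principal ideal of the continuous algebra $\mathcal{B}_f$ has no atoms), complete, and carries the restriction of $\mu_{i_0}$, which is still a finite strictly positive countably additive measure. Proposition \ref{atomic-distrib} then yields that $\mathcal{B}_g$ is not $\sigma$-distributive.

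Finally I would transfer this failure of $\sigma$-distributivity from $\mathcal{B}_g$ up to $\mathcal{B}$. The key observation is that for any family of elements lying below $g$, their supremum and their infimum computed in $\mathcal{B}$ coincide with those computed in $\mathcal{B}_g$, because the upper bound $g$ is preserved under arbitrary joins and meets of elements below it. Consequently, applied to a double sequence in $\mathcal{B}_g$, the defining $\sigma$-distributive identity reads literally the same in $\mathcal{B}$ as in $\mathcal{B}_g$; hence $\sigma$-distributivity of $\mathcal{B}$ would force that of $\mathcal{B}_g$. Since the latter fails, $\mathcal{B}$ is not $\sigma$-distributive, which completes the contrapositive.

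I expect the only genuinely delicate point to be this last transfer step: one must verify that the countable meets and joins occurring in the $\sigma$-distributive law are absolute between $\mathcal{B}_g$ and $\mathcal{B}$, so that Proposition \ref{atomic-distrib}, stated for the intrinsic algebra $\mathcal{B}_g$, really witnesses non-distributivity of the ambient $\mathcal{B}$. Everything else is a routine assembly of the recalled atomic–continuous decomposition and the multinormed-partition theorem.
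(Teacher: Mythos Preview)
Your proposal is correct and follows exactly the route the paper intends: the paper's ``proof'' is the single sentence ``Therefore, from Proposition~\ref{atomic-distrib} we have the following,'' preceded by the partition result (\cite{book-Kusraev}, 1.2.10), and your argument is precisely the unpacking of that sentence --- isolate a continuous piece via the atomic/continuous decomposition, intersect with a block of the multinormed partition to obtain a continuous principal ideal carrying a finite strictly positive countably additive measure, apply Proposition~\ref{atomic-distrib}, and lift the failure of $\sigma$-distributivity to $\mathcal{B}$. Your concern about the transfer step is well-placed but easily resolved, since suprema and infima of families in $\mathcal{B}_g$ coincide whether computed in $\mathcal{B}_g$ or in $\mathcal{B}$.
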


A complete Boolean algebra $\mathcal{B}$ is called \textit{properly non $\sigma$-distributive}, if for any non-zero $e\in\mathcal{B}$ the Boolean algebra $\mathcal{B}_e$ is not $\sigma$-distributive. By Corollary \ref{cor_atomic-distrib}, every multinorned continuous Boolean algebra  $\mathcal{B}$ is properly non $\sigma$-distributive.

Let $Q$ be the Stone space of a complete Boolean algebra $\mathcal{B}$. Denote by $C_\infty(Q)$ the set of
all continuous functions $x: Q \rightarrow [-\infty, +\infty]$  assuming the values $\pm\infty$ possibly on a nowhere-dense set. The space $C_\infty(Q)$ with naturally defined algebraic operations and partial order is an algebra over the field $\mathbb{R}$ of real numbers and is a universally complete vector lattice. The identically one function
$\mathbf{1}_Q$ is the unity of the algebra $C_\infty(Q)$ and an order-unity of the vector lattice $C_\infty(Q)$ (\cite{book-Kusraev}, 1.4.2).

An element $x\in C_\infty(Q)$ is an idempotent, i.e. $x^2=x$, if and only if  $$x(t)=\chi_{Q(e)}(t)=\left\{\begin{aligned}1,t\in Q(e)\\0,t\notin Q(e)\end{aligned}\right.=:e(t)$$
for some clopen set $Q(e)\subset Q$ corresponding to the element $e\in \mathcal{B}$, in addition, $e\leqslant q \Leftrightarrow e(t)\leqslant q(t)$ for all $t\in Q$, where $e,q\in \mathcal{B}$. Thus, the Boolean algebra $\mathcal{B}$ is identified with the Boolean algebra of all idempotents from $C_\infty(Q)$. In this identification the unity $\mathbf{1}$ of $\mathcal{B}$ coincides with the function $\mathbf{1}_Q$, and zero $\mathbf{0}$ of $\mathcal{B}$ coincides with identically zero function. Further we suppose that $\mathcal{B}\subset C_\infty(Q)$, and the algebra $C_\infty(Q)$ we denote by $L^0(\mathcal{B})$.

As in an arbitrary vector lattice, for every $x\in L^0(\mathcal{B})$ denote by $x_+:=x\vee 0$ (respectively, $x_-:=-(x\wedge 0)$) the positive (negative) part of $x$ and by $|x|:=x_++x_-$ denote the modulus of $x$. The set of all positive elements of $L^0(\mathcal{B})$ is denoted by $L^0_+(\mathcal{B})$.

For every $x\in L^0(\mathcal{B})$ define the support of $x$ by the equality \\$s(x)=\mathbf{1}-\sup\{e\in\mathcal{B}: ex=0\}$. It is clear that $s(x)\in\mathcal{B}$ and $s(q)=q$ for all  $q\in\mathcal{B}$. Note, that an idempotent $q\in\mathcal{B}$ is the support of $x\in L^0(\mathcal{B})$, if and only if $qx=x$ and from $e\in\mathcal{B}, ex=x$, it follows that $e\geqslant q$.

It is easy to see that supports of elements have the following properties:

\begin{claim}\label{pr5} If $x,y\in L^0(\mathcal{B}), 0\neq\lambda\in \mathbb{R}$, then

(i). $s(\lambda x)= s(x);$

(ii). $s(xy)=s(x)s(y)$;

(iii). $s(|x|)=s(x)$;

(iv). If $xy=0$, then $s(x+y)=s(x)+s(y)$, in particular, $s(x)=s(x_+)+s(x_-)$.
\end{claim}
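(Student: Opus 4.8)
The plan is to reduce every statement to the characterization recorded just above the proposition, namely that $s(x)$ is the \emph{least} idempotent $q\in\mathcal{B}$ with $qx=x$; equivalently, passing to the Stone space, $s(x)$ is the clopen set $\overline{\{t\in Q:x(t)\neq 0\}}$ obtained by closing the cozero set of $x$ (the closure is clopen because $Q$ is extremally disconnected). With this description, parts (i) and (iii) are immediate: for $0\neq\lambda\in\mathbb{R}$ one has $\{x\neq 0\}=\{\lambda x\neq 0\}$, and likewise $\{x\neq 0\}=\{|x|\neq 0\}$, so the three elements $x,\lambda x,|x|$ share the same cozero set and hence the same support. Purely algebraically, $e(\lambda x)=\lambda(ex)$ and $e|x|=|ex|$ show that $\{e:e(\lambda x)=0\}$ and $\{e:e|x|=0\}$ both coincide with $\{e:ex=0\}$, whence the supports agree.

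For (ii) I would first dispose of the easy inequality $s(xy)\leq s(x)s(y)$. Since $s(x)x=x$, multiplying by $y$ gives $s(x)(xy)=xy$, so $s(x)$ is an idempotent fixing $xy$ and therefore $s(xy)\leq s(x)$; by symmetry $s(xy)\leq s(y)$, and the meet yields $s(xy)\leq s(x)s(y)$. The reverse inequality is the heart of the matter. The clean route is to use that $L^0(\mathcal{B})=C_\infty(Q)$ admits relative inverses: for each $z$ there is $z^{\dagger}\in L^0(\mathcal{B})$ (pointwise $1/z$ on the cozero set of $z$ and $0$ elsewhere) with $zz^{\dagger}=s(z)$. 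Taking $w=x^{\dagger}y^{\dagger}$ we get $(xy)w=(xx^{\dagger})(yy^{\dagger})=s(x)s(y)$, and since $s(x)s(y)$ is idempotent, $s(x)s(y)=s\bigl(s(x)s(y)\bigr)=s\bigl((xy)w\bigr)\leq s(xy)$ by the easy inequality applied to $u=xy$ and $v=w$. Combining the two inequalities gives $s(xy)=s(x)s(y)$. Equivalently, in the Stone picture the reverse inequality is exactly the extremally disconnected identity $\overline{U}\cap\overline{V}=\overline{U\cap V}$ for the open sets $U=\{x\neq 0\}$ and $V=\{y\neq 0\}$, valid because $\{xy\neq 0\}=U\cap V$ off a nowhere-dense set.

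Finally (iv) follows algebraically from (ii). If $xy=0$ then $s(x)s(y)=s(xy)=s(0)=0$, so $p:=s(x)$ and $q:=s(y)$ are disjoint idempotents and $p+q=p\vee q$. From $y=qy$ and $pq=0$ we get $py=(pq)y=0$, and symmetrically $qx=0$; hence $(p+q)(x+y)=px+qy=x+y$, which shows $s(x+y)\leq p+q$. Conversely, writing $r=s(x+y)$ and using $p(x+y)=x$, we obtain $rx=r\bigl(p(x+y)\bigr)=p\bigl(r(x+y)\bigr)=p(x+y)=x$, so $r$ fixes $x$ and $r\geq s(x)=p$; symmetrically $r\geq q$, so $r\geq p+q$ and equality holds. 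For the last assertion, apply this to $x_+$ and $-x_-$: the parts satisfy $x_+\cdot x_-=0$, so $s(x)=s\bigl(x_++(-x_-)\bigr)=s(x_+)+s(-x_-)=s(x_+)+s(x_-)$, the final step by (i). The main obstacle throughout is the reverse inequality in (ii); once relative inverses (or equivalently the extremally disconnected closure identity) are in hand, everything else is formal manipulation with the defining property of the support.
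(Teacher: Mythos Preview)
Your argument is correct in every part. The paper itself gives no proof of this proposition; it is introduced with the sentence ``It is easy to see that supports of elements have the following properties'' and then immediately moves on, so there is nothing to compare your approach against. Your use of the relative inverse $z^{\dagger}$ to obtain the nontrivial inequality $s(x)s(y)\le s(xy)$ is clean and entirely in the spirit of the $C_\infty(Q)$ framework set up in Section~\ref{sec_prel}; the alternative topological justification via $\overline{U}\cap\overline{V}=\overline{U\cap V}$ for open sets in an extremally disconnected space is also valid (disjoint open sets in such a space have disjoint closures, from which the identity follows by a standard localization argument). The deduction of (iv) from (ii) is exactly right.
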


For an arbitrary non-empty subset $E\subset L^0(\mathcal{B})$ define the support $s(E)$ of $E$ by setting $s(E)=\sup\{s(x)\colon x\in E\}$.

A non-zero linear subspace $X$ of $L^0(\mathcal{B})$ is called a solid space, if $x\in X, y\in L^0(\mathcal{B})$ and $|y|\leqslant|x|$ implies that $y\in X$. If $X$ is a solid space in $L^0(\mathcal{B})$ and $s(X)=\mathbf{1}$, then $X$ is said to be a order-dense solid space in $L^0(\mathcal{B})$.

Denote by $C(Q)$ the algebra of all continuous functions on $Q$ with values in $\mathbb{R}$. It is clear that $C(Q)$ is a subalgebra and a order-dense solid space in $L^0(\mathcal{B})$, in addition, $C(Q)$ is a Banach algebra with the norm $\|x\|_\infty=\sup\limits_{t\in Q}|x(t)|,\\x\in C(Q)$. As usual, a subalgebra $\mathcal{A}$ of $C(Q)$ is called an ideal, if $xa\in \mathcal{A}$ for all $x\in C(Q), a\in\mathcal{A}$.

\begin{claim}\label{pr6} (i). A linear subspace $X$ in $L^0(\mathcal{B})$ is solid, if and only if $C(Q) X=X$;

(ii). If $X$ is a solid space and $X\subset C(Q)$, then $X$ is an ideal in $C(Q)$, in particular, $X$ is a subalgebra of $C_\infty(Q)$. Conversely, if $X$ is an ideal in $C(Q)$, then $X$ is a solid space in $L^0(\mathcal{B})$.
\end{claim}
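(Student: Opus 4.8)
The plan is to prove both parts by reducing them to the single multiplicative characterization of solidity asserted in part (i): a linear subspace $X\subseteq L^0(\mathcal{B})$ is solid precisely when it absorbs multiplication by $C(Q)$, i.e. $C(Q)X=X$. Once part (i) is established, part (ii) follows with little extra work, since an ideal is by definition closed under multiplication by $C(Q)$, while a solid subspace contained in $C(Q)$ is automatically closed under products of its own elements because those elements already lie in $C(Q)$.

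For the forward implication of part (i), suppose $X$ is solid. As $\mathbf{1}_Q\in C(Q)$ and $\mathbf{1}_Q x=x$, we have $X\subseteq C(Q)X$, so it suffices to prove $C(Q)X\subseteq X$. Given $c\in C(Q)$ and $x\in X$, the bound $|c|\leqslant\|c\|_\infty\mathbf{1}_Q$ gives $|cx|=|c||x|\leqslant\|c\|_\infty|x|=|\,\|c\|_\infty x\,|$; since $\|c\|_\infty x\in X$, solidity yields $cx\in X$. Hence $C(Q)X=X$.

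The converse is the crux. Assume $C(Q)X=X$ and take $x\in X$, $y\in L^0(\mathcal{B})$ with $|y|\leqslant|x|$; the goal is to realize $y$ as $cx$ for a suitable $c\in C(Q)$. I use that $C_\infty(Q)=L^0(\mathcal{B})$ is a commutative algebra in which every element $x$ has an inverse $x^{-1}\in C_\infty(Q)$ (given pointwise by $1/x$ where $x$ is finite and non-zero, and by $0$ elsewhere) satisfying $xx^{-1}=s(x)$. Set $c:=yx^{-1}$. From $|y|\leqslant|x|$ and the definition of the support one gets $s(y)\leqslant s(x)$, whence $cx=yx^{-1}x=y\,s(x)=y$. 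The main obstacle is to verify $c\in C(Q)$, that is, that $c$ is bounded and real-valued rather than merely an element of $C_\infty(Q)$. Here one estimates $|c|=|y||x|^{-1}\leqslant s(x)\leqslant\mathbf{1}_Q$ on the support of $x$, with $c=0$ off it, so $|c|\leqslant\mathbf{1}_Q$; the decisive lattice-theoretic fact is that any element of $C_\infty(Q)$ dominated by a scalar multiple of $\mathbf{1}_Q$ can take only finite values, since otherwise continuity would force it to exceed the bound on the dense set where it is finite, and such an element therefore lies in $C(Q)$. Thus $c\in C(Q)$ and $y=cx\in C(Q)X=X$, so $X$ is solid.

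Finally, part (ii) is deduced from part (i). If $X$ is solid with $X\subseteq C(Q)$, then part (i) gives $cx\in X$ for all $c\in C(Q)$, $x\in X$, which is the ideal condition; in particular, taking $c=x\in C(Q)$ for $x,y\in X$ shows $xy\in X$, so $X$ is a subalgebra of $C(Q)$, hence of $C_\infty(Q)$. Conversely, if $X$ is an ideal in $C(Q)$, then $X$ is a linear subspace of $L^0(\mathcal{B})$ with $C(Q)X\subseteq X$ and $X\subseteq C(Q)X$ via $\mathbf{1}_Q$, so $C(Q)X=X$ and part (i) makes $X$ solid. The only point needing care is that solidity is tested against all of $L^0(\mathcal{B})$; but any $y$ with $|y|\leqslant|x|$ and $x\in C(Q)$ is automatically bounded, hence already in $C(Q)$, so no genuine enlargement of the test class occurs.
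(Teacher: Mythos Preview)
Your proof is correct and follows essentially the same approach as the paper. Both directions of (i) and all of (ii) match: the forward direction bounds $|cx|$ by a scalar multiple of $|x|$, and the converse constructs a bounded multiplier via the partial inverse in $C_\infty(Q)$. The only cosmetic difference is that the paper first writes $|y|=a|x|$ with $0\leqslant a\leqslant\mathbf{1}$ and then recovers $y$ from $|y|$ via the sign idempotent $s(y_+)-s(y_-)$, whereas you build the multiplier $c=yx^{-1}$ for $y$ directly; the underlying idea is the same.
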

\begin{proof} (i) It is clear that $X=\mathbf{1}X\subset C(Q) X$. Let $X$ be a solid space in $L^0(\mathcal{B})$ and let $y\in C(Q), x\in X$. Select $c>0$ such that $|y|\leqslant c \mathbf{1}$. Then $|yx|\leqslant c|x|$, that implies $yx\in X$. Consequently, $C(Q) X\subset X$, and therefore $C(Q) X=X$.

Conversely, if $C(Q) X=X$, $x\in X, y\in L^0(\mathcal{B})$ and $|y|\leqslant |x|$, then $|y|=a|x|$, where $a\in C(Q)$ and $0\leqslant a\leqslant \mathbf{1}$. Hence, $|y|\in X$, and therefore $y=(s(y_+)-s(y_-))|y|\in X$.

(ii) If $X$ is a solid space and $X\subset C(Q)$, then $C(Q) X=X$ (see item (i)), and therefore $X$ is an ideal in $C(Q)$. Conversely, if $X$ is an ideal in $C(Q)$, then $C(Q) X=X$ and, by item (i), $X$ is a solid space in $L^0(\mathcal{B})$.
\end{proof}

\begin{claim}\label{fund}Let $X$ be a solid space in  $L^0(\mathcal{B})$. Then

(i). For any non-zero $e\in s(X)\mathcal{B}$ there exists $0\neq p\in X\cap\mathcal{B}$ such that $p\leqslant e$.

(ii). There exists a partition $\{e_i\}_{i\in I}$ of the support $s(X)$ contained in $X$.
\end{claim}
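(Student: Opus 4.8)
The plan is to prove (i) first and then derive (ii) from (i) together with the exhaustion principle (Theorem \ref{principle}). For (i), fix a non-zero $e \in s(X)\mathcal{B}$, i.e. $\mathbf{0} \neq e \leqslant s(X) = \sup\{s(x) : x \in X\}$. The family $\{s(x) : x \in X\}$ cannot be disjoint from $e$ in its entirety, since otherwise $e$ would be disjoint from its supremum $s(X) \geqslant e$, forcing $e = \mathbf{0}$. Hence I can choose $x \in X$ with $q := e\, s(x) \neq \mathbf{0}$. Replacing $x$ by $qx$, which still lies in $X$ because $|qx| = q|x| \leqslant |x|$ and $X$ is solid, and passing to $f := |qx| \in L^0_+(\mathcal{B}) \cap X$ (again by solidity), I obtain an element $f \geqslant 0$ of $X$ with $s(f) = s(qx) = q\, s(x) = q \leqslant e$, using Proposition \ref{pr5}.

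The main point is now to extract from $f$ a non-zero idempotent that still belongs to $X$; note that the obvious candidate $s(f) = q$ need not lie in $X$. For $\lambda > 0$ consider the spectral idempotent $p_\lambda := s\bigl((f - \lambda\mathbf{1})_+\bigr) \in \mathcal{B}$. On the clopen set $Q(p_\lambda)$ one has $f \geqslant \lambda$, so $\lambda p_\lambda \leqslant f p_\lambda \leqslant f$, whence $p_\lambda \leqslant \tfrac{1}{\lambda} f$. I will check that $\sup_{\lambda > 0} p_\lambda = s(f) = q$: the inequality $\leqslant$ is clear from $(f-\lambda\mathbf{1})_+ \leqslant f$, and if the supremum $r$ were strictly below $q$ then $f$ would vanish on $Q(q - r)$, contradicting $s(f) = q$. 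Since $q \neq \mathbf{0}$, there is $\lambda_0 > 0$ with $p := p_{\lambda_0} \neq \mathbf{0}$. Then $p \leqslant q \leqslant e$, and from $p = |p| \leqslant \tfrac{1}{\lambda_0}|f|$ and the solidity of $X$ it follows that $p \in X$. Thus $\mathbf{0} \neq p \in X \cap \mathcal{B}$ and $p \leqslant e$, which proves (i). I expect this extraction step to be the only real obstacle, since it is where the structure of $L^0(\mathcal{B}) = C_\infty(Q)$ and the spectral calculus are actually used.

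For (ii), set $D := \{p \in X \cap \mathcal{B} : \mathbf{0} \neq p \leqslant s(X)\}$. By part (i), $D$ minorizes the Boolean algebra $\mathcal{B}_{s(X)}$: every non-zero $e \leqslant s(X)$ dominates some non-zero member of $D$. Applying Theorem \ref{principle} with the element $s(X)$ produces a disjoint family $D_0 = \{e_i\}_{i \in I} \subset D$ with $\sup_{i \in I} e_i = s(X)$. By construction the $e_i$ are pairwise disjoint idempotents lying in $X$ with supremum $s(X)$, i.e. $\{e_i\}_{i \in I}$ is a partition of $s(X)$ contained in $X$, establishing (ii).
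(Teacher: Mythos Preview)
Your proof is correct and follows essentially the same route as the paper's: find $x\in X$ with $e\,s(x)\neq 0$, pass to the spectral idempotents of $|x|$ (your $p_\lambda=s((f-\lambda\mathbf{1})_+)$ is precisely the paper's $\mathbf{1}-e_\lambda(|x_0|)$), use solidity and $\lambda p_\lambda\leqslant f$ to place $p_\lambda$ in $X$, and then invoke Theorem~\ref{principle} for (ii). The only cosmetic difference is that you first cut down to $f=|qx|$ with $s(f)\leqslant e$, whereas the paper keeps $|x_0|$ and multiplies by $e$ at the end.
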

\begin{proof}
(i). Since $s(X)=\sup\{s(x):x\in X\}$, for every given $0\neq e\in s(X)\mathcal{B}$ there exists a non-zero $x_0\in X$ such that $s(|x_0|)e\neq 0$. For every $\lambda>0$ consider the spectral idempotent $e_\lambda(|x_0|)=\{t\in Q:|x_0(t)|\leqslant\lambda\}$ of $|x_0|$. Since  $\lambda\bigl(\mathbf{1}-e_\lambda(|x_0|)\bigr)\leqslant \bigl(\mathbf{1}-e_\lambda(|x_0|)\bigr)|x_0|$ and $X$ is a solid space in $L^0(\mathcal{B})$, it follows that $\bigl(\mathbf{1}-e_\lambda(|x_0|)\bigr)\in X$. Using the convergence $\bigl(\mathbf{1}-e_\lambda(|x_0|)\bigr)\uparrow s(|x_0|)$ for $\lambda\rightarrow 0$, we have$\bigl(\mathbf{1}-e_\lambda(|x_0|)\bigr) e\uparrow s(|x_0|)e\neq 0$. Hence, there exists $\lambda_0>0$, such that $p=\bigl(\mathbf{1}-e_{\lambda_0}(|x_0|)\bigr)e\neq 0$,  in addition $p\in X$ and $p\leqslant e$.

Item (ii) follows from Theorem \ref{principle} and item (i).
\end{proof}

Let $X$ be a linear space over the field $\mathbb{K}$, where $\mathbb{K}$ is either the field $\mathbb{C}$ of complex numbers, or the field  $\mathbb{R}$ of real numbers. A mapping $\|\cdot\|: X\to \mathbb{R}$ is called a quasi-norm, if there exists $C\geqslant 1$ such that for all
$x,y\in X,\alpha \in\mathbb{K}$ the following properties hold

1) $\|x\|\geqslant 0, \|x\|=0 \Leftrightarrow x=0$;

2) $\|\alpha x\|=|\alpha|\|x\|$;

3) $\|x+y\|\leqslant C(\|x\|+\|y\|)$.


A quasi-norm $\|\cdot\|_X$ on a solid space $X$ is said to be monotone, if  $x,y\in X, |y|\leqslant|x|$ implies that $\|y\|_X\leqslant\|x\|_X$. A solid space $X$ in $L^0(\mathcal{B})$ is called a quasi-normed solid space, if $X$ is equipped with a monotone quasi-norm.

For every non-zero $e\in\mathcal{B}$ consider the subalgebra \\$eL^0(\mathcal{B})=\{ex:x\in L^0(\mathcal{B})\}$. If $Q$ is the Stone space of a complete Boolean algebra $\mathcal{B}$, and
$Q(e)$ is a clopen set corresponding to the idempotent $e\in\mathcal{B}$, then $Q_e=Q(e)\cap Q$ is the Stone space of the Boolean algebra $\mathcal{B}_e$. In addition, the mapping $\Phi\colon eL^0(\mathcal{B})\to L^0(\mathcal{B}_e)$ defined by the equality $\Phi(x)=x|_{Q_e}$, where $x|_{Q_e}$ is the restriction of a function $x$ on the compact $Q_e$, is an algebraic and lattice isomorphism of  $eL^0(\mathcal{B})$ onto $L^0(\mathcal{B}_e)$, such that $\Phi(p)=p$ for all $p\in e\mathcal{B}$. Consequently, for every solid space $X$ in $L^0(\mathcal{B})$ and every $0\neq e\in\mathcal{B}$ the image $Y=\Phi(eX)$ is a solid space in $L^0(\mathcal{B}_e)$, in addition, if $\|\cdot\|_X$ is a monotone quasi-norm on $X$, then $\|y\|_Y=\|\Phi^{-1}(y)\|_X,y\in\Phi(eX),$ is a monotone quasi-norm on  $Y$. Hence the following proposition holds.

\begin{claim}\label{eY} If $X$ is a solid space (respectively, $(X, \|\cdot\|_X)$ is a quasi-normed solid space) in $L^0(\mathcal{B}),0\neq e\in\mathcal{B}$, then $\Phi(eX)$ (respectively, \\$(\Phi(eX),\|\cdot\|_{\Phi(eX)})$ ) is a solid space (respectively, a quasi-normed solid space) in $L^0(\mathcal{B}_e)$, where $\|\Phi(ex)\|_{\Phi(eX)}=\|ex\|_X$ for all $x\in X$. In addition, $\Phi(eX)$ is a order-dense solid space in $L^0(\mathcal{B}_e)$, if $e=s(X)$.
\end{claim}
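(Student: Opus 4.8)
The plan is to obtain every assertion by transporting structure along the isomorphism $\Phi\colon eL^0(\mathcal{B})\to L^0(\mathcal{B}_e)$ recalled just above, which is simultaneously an algebra and a lattice isomorphism and fixes every idempotent $p\in e\mathcal{B}$. Write $Y=\Phi(eX)$ and let $\Phi^{-1}\colon L^0(\mathcal{B}_e)\to eL^0(\mathcal{B})$ be its inverse; being a lattice isomorphism, both $\Phi$ and $\Phi^{-1}$ commute with the modulus and preserve order. A preliminary remark I would record first is that $eX\subseteq X$: for $x\in X$ one has $|ex|=e|x|\leqslant|x|$, so solidity of $X$ gives $ex\in X$. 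This guarantees that $\|ex\|_X$ is meaningful and, since $\Phi$ is injective, that the assignment $\|\Phi(ex)\|_{\Phi(eX)}:=\|ex\|_X$ is unambiguously defined.

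For solidity of $Y$, I would take $y\in Y$ and $z\in L^0(\mathcal{B}_e)$ with $|z|\leqslant|y|$, and pull back. Writing $y=\Phi(ex)$ with $x\in X$ and putting $w=\Phi^{-1}(z)\in eL^0(\mathcal{B})$, the fact that $\Phi^{-1}$ commutes with $|\cdot|$ and is order preserving yields $|w|=\Phi^{-1}(|z|)\leqslant\Phi^{-1}(|y|)=|ex|\leqslant|x|$. Solidity of $X$ then forces $w\in X$, and since $w\in eL^0(\mathcal{B})$ we get $w=ew\in eX$, whence $z=\Phi(w)\in Y$. One may assume $e\leqslant s(X)$ throughout, since $eX=(e\wedge s(X))X$ by Proposition \ref{pr5}, so that $Y\neq\{0\}$ and $Y$ is genuinely a solid space.

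The quasi-norm assertion is then a formal verification. Since $\|y\|_Y=\|\Phi^{-1}(y)\|_X$ and $\Phi^{-1}$ is linear, positivity and definiteness follow from injectivity of $\Phi^{-1}$, absolute homogeneity is immediate, and the quasi-triangle inequality holds with the same constant $C$ as for $\|\cdot\|_X$. Monotonicity transfers in the same way: if $y_1,y_2\in Y$ satisfy $|y_1|\leqslant|y_2|$, then $|\Phi^{-1}(y_1)|\leqslant|\Phi^{-1}(y_2)|$ with both preimages lying in $eX\subseteq X$, so monotonicity of $\|\cdot\|_X$ gives $\|y_1\|_Y\leqslant\|y_2\|_Y$.

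The step I expect to require the most care is order-density. Here I would compute the support of $Y$ inside $\mathcal{B}_e$. For $x\in X$, Proposition \ref{pr5}(ii) gives $s(ex)=e\,s(x)\in e\mathcal{B}$, and because $\Phi$ fixes the idempotents of $e\mathcal{B}$ and carries the algebraic characterization of the support through an algebra isomorphism, the support of $\Phi(ex)$ in $L^0(\mathcal{B}_e)$ equals $e\wedge s(x)$. Taking the supremum over $x\in X$ and invoking the infinite distributive law in the complete Boolean algebra $\mathcal{B}$ yields $s(Y)=\sup_{x\in X}(e\wedge s(x))=e\wedge s(X)$. When $e=s(X)$ this is exactly $e$, the unit of $\mathcal{B}_e$, so $Y$ is order-dense in $L^0(\mathcal{B}_e)$. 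The only delicate points are justifying that supports computed in $\mathcal{B}_e$ correspond to those in $e\mathcal{B}$ under $\Phi$ (which rests on $\Phi$ fixing $e\mathcal{B}$ and being an algebra isomorphism) and the interchange of $\wedge$ with an arbitrary supremum, valid precisely because $\mathcal{B}$ is complete.
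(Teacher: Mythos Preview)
Your argument is correct and follows the same idea as the paper: transport everything through the algebraic and lattice isomorphism $\Phi$. The paper, however, treats Proposition~\ref{eY} as essentially self-evident---the entire justification is the sentence preceding the statement (``Consequently, for every solid space $X$ \ldots\ the image $Y=\Phi(eX)$ is a solid space \ldots''), with no separate proof environment. You have simply written out in full the verifications the paper leaves implicit, including the support computation $s(Y)=e\wedge s(X)$ via Proposition~\ref{pr5}(ii) and the distributive law, which the paper does not mention at all.
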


Now, consider the complexification $L^0_\mathbb{C}(\mathcal{B})=L^0(\mathcal{B})\varoplus iL^0(\mathcal{B})$ (with $i$ standing for the imaginary unity) of a real vector lattice $L^0(\mathcal{B})$ (see \cite{book-Kusraev}, 1.3.11). As usual, for an element $z=x+iy,x,y\in L^0(\mathcal{B})$ the adjoint element of $z$ is defined by the equality $\overline{z}=x-iy$, in addition, $\mathrm{Re}z=\frac{1}{2}(z+\overline{z})=x$ is called the real part of  $z$ and $\mathrm{Im}z=\frac{1}{2i}(z-\overline{z})=y$ is called the imaginary part of $z$. The modulus $|z|$ of every element $z\in L^0_\mathbb{C}(\mathcal{B})$ is defined by the equality $|z|:=\sup\{\mathrm{Re}(e^{i\theta}z):0\leqslant\theta<2\pi\}.$ An element $z$ of $L^0_\mathbb{C}(\mathcal{B})$ may be interpreted as continuous function $z\colon Q\to \mathbb{\overline{C}}=\mathbb{C}\cup \{\infty\}$, assuming the values $\infty$ possibly on a nowhere-dense set, where \  $\mathbb{\overline{C}}$ is the one-point compactification of $\mathbb{C}$. In addition, the algebraic operations in $L^0_\mathbb{C}(\mathcal{B})$ coincide with pointwise algebraic operations for the functions from $L^0_\mathbb{C}(\mathcal{B})$, defined up to non-where dense sets. In particular, $L^0_\mathbb{C}(\mathcal{B})$ is a commutative $*$-algebra and the modulus  $|z|$ of an element $z\in L^0_\mathbb{C}(\mathcal{B})$ is defined by the equality $|z|(t)=(\overline{z}(t)z(t))^{\frac{1}{2}}=\left((\mathrm{Re}z(t))^2+(\mathrm{Im}z(t))^2\right)^{\frac{1}{2}}$ for all $t$ from some open everywhere dense set from $Q$. The selfadjoint part $(L^0_\mathbb{C}(\mathcal{B}))_h=\{z\in L^0_\mathbb{C}(\mathcal{B}):z=\overline{z}\}$ of a complex vector lattice $L^0_\mathbb{C}(\mathcal{B})$ coincides with $L^0(\mathcal{B})$. In particular, $|z|\in L^0_+(\mathcal{B})$ for all $z\in L^0_\mathbb{C}(\mathcal{B})$. The algebra $\mathbb{C}(Q)$ of all continuous complex functions $z\colon Q\to \mathbb{C}$ coincides with the complexification $C(Q)\varoplus iC(Q)$ and $\mathbb{C}(Q)$ is a commutative $C^*$-algebra with the norm $\|z\|_\infty=\sup\limits_{t\in Q} |z(t)|$.

For any $z\in L^0_\mathbb{C}(\mathcal{B})$ the support of $z$ is defined by the equality $s(z):=s(|z|)$, and the support $s(E)$ of a subset $E\subset L^0_\mathbb{C}(\mathcal{B})$, by the equality \\$s(E)=\sup\{s(z):z\in E\}$, respectively.

A solid space $\mathbb{X}$ and a quasi-normed solid space $(\mathbb{X},\|\cdot\|_\mathbb{X})$ in $L^0_\mathbb{C}(\mathcal{B})$ are defined as in $L^0(\mathcal{B})$. It is clear that for a solid space $\mathbb{X}$ in $L^0_\mathbb{C}(\mathcal{B})$ the set $X:=\mathbb{X}_h=\mathbb{X}\cap L^0(\mathcal{B})$ is a solid space in  $L^0(\mathcal{B})$, in addition, $\mathbb{X}=X\varoplus iX$. Moreover, an element $z\in L^0_\mathbb{C}(\mathcal{B})$ contained in $\mathbb{X}$ if and only if $|z|\in \mathbb{X}_h\cap L^0_+(\mathcal{B})$, in addition, $s(\mathbb{X})=s(\mathbb{X}_h)$. If $\|\cdot\|_\mathbb{X}$ is a monotone quasi-norm on  $\mathbb{X}$, then $(X,\|\cdot\|_\mathbb{X})$ is a quasi-normed solid space in  $L^0(\mathcal{B})$.

Conversely, if $X$ is a solid space in $L^0(\mathcal{B})$ and $\|\cdot\|_X$ is a monotone quasi-norm on  $X$, then $\mathbb{X}=X\varoplus iX$ is a solid space in $L^0_\mathbb{C}(\mathcal{B}),\mathbb{X}_h=X$ and the function $\|z\|_\mathbb{X}=\| |z| \|_X,z\in\mathbb{X}$ is a monotone quasi-norm on $\mathbb{X}$.

\section{Derivations on solid spaces in  $L^0(\mathcal{B})$}\label{sec_der}

A linear mapping  $\delta$ from $L^0(\mathcal{B})$ (respectively, $L^0_\mathbb{C}(\mathcal{B})$) into $L^0(\mathcal{B})$ (respectively, $L^0_\mathbb{C}(\mathcal{B})$) is called a \textit{derivation} if
\begin{gather}\label{def-der}
\delta(xy)=\delta(x)y+x\delta(y)
\end{gather}
 for all $x,y\in L^0(\mathcal{B})$ (respectively, $x,y\in L^0_\mathbb{C}(\mathcal{B})$).

The following theorem gives a necessary and sufficient condition for existence of non-zero derivations.

\begin{theorem}[\cite{Kusraev}, Corollary 3.5]\label{Kus} For a complete Boolean algebra $\mathcal{B}$ the following conditions are equivalent:

(i). $\mathcal{B}$ is a $\sigma$-distributive Boolean algebra;

(ii). There are no non-zero derivations from $L^0_\mathbb{C}(\mathcal{B})$ into $L^0_\mathbb{C}(\mathcal{B})$.
\end{theorem}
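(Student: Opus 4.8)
The plan is to reduce the statement to a single dichotomy for $L^0_\mathbb{C}(\mathcal{B})$: whether some element can fail to be ``locally constant''. First I would record the elementary algebra of any derivation $\delta$. From $\delta(\mathbf 1)=\delta(\mathbf 1^2)=2\delta(\mathbf 1)$ we get $\delta(\mathbf 1)=0$, and for an idempotent $e\in\mathcal B$ the identity $\delta(e)=\delta(e^2)=2e\delta(e)$ forces $e\delta(e)=0$, hence $\delta(e)=0$. By $\mathbb C$-linearity $\delta$ then annihilates every finite $\mathbb C$-combination of idempotents, i.e.\ every step function. Moreover, if $e\in\mathcal B$ and $ex=0$ then $0=\delta(ex)=e\delta(x)$, so taking $e=\mathbf 1-s(x)$ yields $s(\delta(x))\leqslant s(x)$; thus every derivation is automatically band preserving and the whole problem sits inside the theory of band preserving operators.

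The bridge to $\sigma$-distributivity is a standard reformulation I would invoke (it is equivalent to the diagonal identity defining $\sigma$-distributivity, cf.\ \cite{book-Kusraev}): $\mathcal B$ is $\sigma$-distributive if and only if every countable family of partitions of unity in $\mathcal B$ admits a common refinement that is again a partition of unity; equivalently, every $x\in L^0(\mathcal B)$ is \emph{locally constant}, meaning there is a partition $\{e_i\}_{i\in I}$ of $\mathbf 1$ and scalars $\{\lambda_i\}$ with $e_ix=\lambda_i e_i$. In the direction I need, for $0\leqslant x\leqslant\mathbf 1$ I would form the dyadic spectral partitions $P_n=\{e_{n,m}\}_m$, $e_{n,m}=\{\tfrac m{2^n}\leqslant x<\tfrac{m+1}{2^n}\}$; a common refinement $\{e_i\}$ of the sequence $(P_n)_n$ forces $x$ to be constant on each $e_i$ by letting $n\to\infty$. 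Granting this, (i)$\Rightarrow$(ii) is immediate: for each $i$ we get $e_i\delta(x)=\delta(e_ix)=\delta(\lambda_ie_i)=0$, and since $\sup_ie_i=\mathbf 1$, Proposition \ref{pr5} gives $s(\delta(x))=0$, so $\delta(x)=0$ and $\delta\equiv 0$.

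For the converse I would argue by contraposition, constructing a non-zero derivation when $\mathcal B$ is not $\sigma$-distributive. Failure of local constancy yields a non-zero $e_0\in\mathcal B$ and an element $u\in L^0_\mathbb{C}(\mathcal B)$ that is \emph{nowhere locally constant} below $e_0$ (no $eu$ with $0\ne e\leqslant e_0$ is a scalar multiple of $e$); passing to $\mathcal B_{e_0}$ as in Proposition \ref{eY} I may assume this over all of $\mathcal B$. The key observation is that such a $u$ is \emph{locally transcendental} over the subalgebra $\mathbb C_{\mathrm{loc}}$ of locally constant elements: if a monic polynomial with locally constant coefficients vanished on a non-zero band, then on a further refinement $u$ would take finitely many values and so be locally constant, a contradiction. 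I would then define a derivation $\delta_0$ on the regular-ring analogue $F$ of $\mathbb C_{\mathrm{loc}}(u)$ inside $L^0_\mathbb{C}(\mathcal B)$ by the formal rule $\delta_0(u)=\mathbf 1$ (well defined exactly because $u$ is transcendental), and extend $\delta_0$ to all of $L^0_\mathbb{C}(\mathcal B)$ by the usual derivation-extension theorem along field extensions in characteristic zero (choose a transcendence basis, send its members to $0$; algebraic steps extend uniquely by separability). The resulting $\delta$ satisfies $\delta(u)=\mathbf 1\ne 0$, contradicting (ii).

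The main obstacle is making this extension legitimate: $L^0_\mathbb{C}(\mathcal B)$ is not a field but a commutative von Neumann regular ring with many zero divisors coming from the band structure, so the transcendence-basis and derivation-extension arguments must be run uniformly over $\mathcal B$, and the resulting map must be shown to take values inside $L^0_\mathbb{C}(\mathcal B)$ and to be everywhere defined. The clean way to discharge this is Boolean-valued analysis: inside the model $V^{(\mathcal B)}$ the algebra $L^0_\mathbb{C}(\mathcal B)$ descends from an honest field $\mathfrak C$ (the internal complexes), ``nowhere locally constant'' becomes ``transcendental over the standard field $\mathbb C^\wedge$'', and by transfer the whole statement reduces to the classical fact that a characteristic-zero field admits a non-zero derivation over a subfield precisely when it is not algebraic over it. Finally $\mathcal B$ is $\sigma$-distributive exactly when it adds no new reals, i.e.\ when the internal reals $\mathfrak R$ are algebraic over $\mathbb R^\wedge$; combining these identifications yields the stated equivalence.
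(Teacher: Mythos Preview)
The paper does not prove this theorem at all: it is quoted as Corollary~3.5 of \cite{Kusraev} and used as a black box, so there is no ``paper's own proof'' to compare against. Your outline is in fact the strategy of the cited source---characterize $\sigma$-distributivity by the property that every element of $L^0(\mathcal B)$ is locally constant (a mixing of scalars), then for $(i)\Rightarrow(ii)$ use $\delta(e)=0$ on idempotents together with band preservation, and for $(ii)\Rightarrow(i)$ build a non-zero derivation from a transcendental element via Boolean-valued transfer to the internal complex field.

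As a plan this is sound, but two steps deserve more than a one-line gesture. First, the passage from ``$\mathcal B$ is not $\sigma$-distributive'' to ``there exists $u$ that is \emph{nowhere} locally constant below some $e_0$'' is not automatic: you only get an element that is not locally constant, and you must invoke an exhaustion argument (Theorem~\ref{principle}) to strip off the maximal band on which it \emph{is} locally constant and show the remainder is non-zero. Second, the derivation-extension step really does require the Boolean-valued machinery (or an equivalent uniform/cyclic construction over $\mathcal B$); the naive Zorn argument on $L^0_\mathbb{C}(\mathcal B)$ itself runs into trouble because transcendence bases and polynomial relations are band-local notions, and a globally chosen basis need not be compatible with mixing. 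Your final paragraph acknowledges this and points to the correct fix, so the proposal is correct in spirit, but in a write-up you would need either to import the Boolean-valued transfer principle explicitly or to carry out the cyclic-set version of the extension by hand.
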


In case, when the Boolean algebra $\mathcal{B}$ is multinormed, $\mathcal{B}$ is $\sigma$-distributive if and only if $\mathcal{B}$ is an atomic Boolean algebra $\mathcal{B}$ (see Corollary \ref{cor_atomic-distrib}). Therefore for a multinormed Boolean algebra $\mathcal{B}$, there exist nonzero derivations on  $L^0_\mathbb{C}(\mathcal{B})$, if and only if the Boolean algebra $\mathcal{B}$ is not atomic (this fact was also established independently of  \cite{Kusraev} in  (\cite{B-Ch-S}, Theorem 3.4)).

By Theorem \ref{Kus}, in case when $\mathcal{B}$ is not a $\sigma$-distributive Boolean algebra there exists non-zero derivations from $L^0_\mathbb{C}(\mathcal{B})$ into $L^0_\mathbb{C}(\mathcal{B})$. 

A derivation $\delta\colon L^0_\mathbb{C}(\mathcal{B})\to L^0_\mathbb{C}(\mathcal{B})$ is called a $*$-derivation, if $\delta(\overline{z})=\overline{\delta(z)}$ for all $z\in L^0_\mathbb{C}(\mathcal{B})$. In this case, $\delta(L^0(\mathcal{B}))\subset L^0(\mathcal{B})$ and the restriction of $\delta$ on $L^0(\mathcal{B})$ is a (real) derivation from $L^0(\mathcal{B})$ into $L^0(\mathcal{B})$.

For every derivation $\delta\colon L^0_\mathbb{C}(\mathcal{B})\to L^0_\mathbb{C}(\mathcal{B})$ consider the mappings
$\delta_\mathbbm{Re}$ and $\delta_\mathbbm{Im}$ from $L^0_\mathbb{C}(\mathcal{B})$ into $L^0_\mathbb{C}(\mathcal{B})$, defined by the equalities:
\begin{gather}\label{self_adjoint_der}
\delta_\mathbbm{Re}(x)=\frac{\delta(x)+\overline{\delta(x)}}{2}, \delta_\mathbbm{Im}(x)=\frac{\delta(x)-\overline{\delta(x)}}{2i},x\in L^0_\mathbb{C}(\mathcal{B}).
\end{gather}
It is clear that $\delta_\mathbbm{Re},\delta_\mathbbm{Im}$ are $*$-derivations from $L^0_\mathbb{C}(\mathcal{B})$ into $L^0_\mathbb{C}(\mathcal{B})$, in addition $\delta=\delta_\mathbbm{Re}+i\delta_\mathbbm{Im}$.

Theorem \ref{Kus} implies the following
\begin{corollary}\label{cor-Kus} For a complete Boolean algebra $\mathcal{B}$ the following conditions are equivalent:

(i). The Boolean algebra $\mathcal{B}$ is $\sigma$-distributive;

(ii). There are no non-zero derivations from $L^0(\mathcal{B})$ into $L^0(\mathcal{B})$.
\end{corollary}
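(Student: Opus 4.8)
The plan is to deduce this real-coefficient statement from the complex version in Theorem \ref{Kus} by passing through the complexification $L^0_\mathbb{C}(\mathcal{B})=L^0(\mathcal{B})\oplus iL^0(\mathcal{B})$. The two implications are handled separately, each by transporting a derivation between the real algebra and its complexification, so that Theorem \ref{Kus} can be applied on the complex side.

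For (i) $\Rightarrow$ (ii), I would take an arbitrary real derivation $\delta\colon L^0(\mathcal{B})\to L^0(\mathcal{B})$ and extend it to the map $\tilde\delta\colon L^0_\mathbb{C}(\mathcal{B})\to L^0_\mathbb{C}(\mathcal{B})$ defined by $\tilde\delta(x+iy)=\delta(x)+i\delta(y)$ for $x,y\in L^0(\mathcal{B})$. A short check shows that $\tilde\delta$ is additive, $\mathbb{R}$-homogeneous, and satisfies $\tilde\delta(iz)=i\tilde\delta(z)$, hence is $\mathbb{C}$-linear; expanding the product $(x_1+iy_1)(x_2+iy_2)$ into its real and imaginary parts and applying the Leibniz rule for $\delta$ to each of the four resulting real products verifies that $\tilde\delta$ satisfies (\ref{def-der}), so $\tilde\delta$ is a complex derivation. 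If $\mathcal{B}$ is $\sigma$-distributive, Theorem \ref{Kus} forces $\tilde\delta=0$, and restricting to $L^0(\mathcal{B})$ gives $\delta=0$; thus no non-zero real derivation exists.

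For the converse (ii) $\Rightarrow$ (i), I would argue contrapositively: assume $\mathcal{B}$ is not $\sigma$-distributive. By Theorem \ref{Kus} there is a non-zero complex derivation $\delta\colon L^0_\mathbb{C}(\mathcal{B})\to L^0_\mathbb{C}(\mathcal{B})$. Using the decomposition $\delta=\delta_\mathbbm{Re}+i\delta_\mathbbm{Im}$ into $*$-derivations recorded in (\ref{self_adjoint_der}), at least one of $\delta_\mathbbm{Re},\delta_\mathbbm{Im}$ is non-zero; call it $d$. As noted in the text, a $*$-derivation maps $L^0(\mathcal{B})$ into $L^0(\mathcal{B})$ and its restriction there is a real derivation. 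It remains to see this restriction is non-zero: since $d$ is $\mathbb{C}$-linear and every $z\in L^0_\mathbb{C}(\mathcal{B})$ is $z=x+iy$ with $x,y\in L^0(\mathcal{B})$, a vanishing of $d$ on $L^0(\mathcal{B})$ would give $d(z)=d(x)+id(y)=0$ for all $z$, contradicting $d\neq 0$. Hence the restriction $d|_{L^0(\mathcal{B})}$ is a non-zero real derivation, so (ii) fails.

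The computations involved are routine; the one point that will demand a little care is the verification that the complexified map $\tilde\delta$ genuinely obeys the Leibniz rule, since this is where commutativity and the splitting of the product into real and imaginary parts must be tracked explicitly. Everything else reduces directly to Theorem \ref{Kus} together with the already-recorded facts that $\delta_\mathbbm{Re},\delta_\mathbbm{Im}$ are $*$-derivations and that every $*$-derivation preserves $L^0(\mathcal{B})$.
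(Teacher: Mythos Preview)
Your proposal is correct and follows essentially the same route as the paper: complexify a real derivation via $\tilde\delta(x+iy)=\delta(x)+i\delta(y)$ for $(i)\Rightarrow(ii)$, and use the decomposition $\delta=\delta_\mathbbm{Re}+i\delta_\mathbbm{Im}$ into $*$-derivations together with Theorem~\ref{Kus} for the converse. The only cosmetic difference is that you phrase $(ii)\Rightarrow(i)$ contrapositively and spell out why the restriction of a non-zero $*$-derivation to $L^0(\mathcal{B})$ remains non-zero, which the paper leaves implicit.
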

\begin{proof}
$(i)\Rightarrow(ii)$. If $\delta$ is an arbitrary (real) derivation from  $L^0(\mathcal{B})$ into $L^0(\mathcal{B})$, then $\hat{\delta}(x+iy)=\delta(x)+i\delta(y)$ is a (complex) derivation from $L^0_\mathbb{C}(\mathcal{B})$ into $L^0_\mathbb{C}(\mathcal{B})$. By Theorem \ref{Kus}, we have $\hat{\delta}=0$, and therefore $\delta=0$.

$(ii)\Rightarrow(i)$. Since any derivation $\delta\colon L^0_\mathbb{C}(\mathcal{B})\to L^0_\mathbb{C}(\mathcal{B})$ has a form  $\delta=\delta_\mathbbm{Re}+i\delta_\mathbbm{Im}$, where the restriction of  $\delta_\mathbbm{Re}$ and $\delta_\mathbbm{Im}$ on $L^0(\mathcal{B})$ are (real) derivations from $L^0(\mathcal{B})$ into $L^0(\mathcal{B})$, it follows that there are no non-zero derivations on $L^0_\mathbb{C}(\mathcal{B})$. By Theorem \ref{Kus}, the Boolean algebra $\mathcal{B}$ is not $\sigma$-distributive.
\end{proof}

Note, that the assertion of Corollary \ref{cor-Kus} has been mentioned in 5.3.4 of \cite{G-K-K}. Since in \cite{G-K-K} it is given without the proof, for complete expression we give the proof.

By Corollary \ref{cor_atomic-distrib}, for a multinormed Boolean algebra $\mathcal{B}$, there are no non-zero derivations on $L^0(\mathcal{B})$ if and only if the Boolean algebra is $\mathcal{B}$ atomic.

Let $X,Y$ be solid spaces in $L^0(\mathcal{B})$, $X\subset C(Q)$. By Proposition \ref{pr6}, \\$C(Q)Y=Y$, i.e. $xy\in Y$ for all $x\in X,y\in Y$. This circumstance provides the correctness of the following definition.

A linear mapping $\delta\colon X\to Y$ is called \textit{derivation}, if equality (\ref{def-der}) holds for all $x,y\in X$. A (complex) derivation for solid spaces $\mathbb{X}$ and $\mathbb{Y}$ in $L^0_\mathbb{C}(\mathcal{B})$, where $\mathbb{X}\subset \mathbb{C}(Q)$, is defined in the same manner.
For every derivation $\delta\colon X\to Y$ (respectively, $\delta\colon \mathbb{X}\to\mathbb{Y}$) the idempotent $s(\delta)=\sup\{s(\delta(x)):x\in X\}$ (respectively, $s(\delta)=\sup\{s(\delta(z)):z\in\mathbb{X}\}$) is called the support of the derivation $\delta$. It is clear that $s(\delta)\leqslant s(Y)$ (respectively, $s(\delta)\leqslant s(\mathbb{Y})$).

\begin{claim}[compare \cite{B-Ch-S}, \S 2, Proposition 2.3]\label{der} If $\delta\colon X\rightarrow Y$ is a derivation from $X$ into $Y$, then for all $e\in X\cap\mathcal{B}, x\in X$ the equalities $\delta(e)=0$ and $\delta(ex)=e\delta(x)$ hold.
\end{claim}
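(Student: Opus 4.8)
The plan is to exploit the idempotency relation $e^2=e$ together with the Leibniz rule (\ref{def-der}) and the commutativity of $L^0(\mathcal{B})$, the whole argument being purely algebraic. A preliminary point to settle is that all the products to which I wish to apply $\delta$ actually lie in its domain $X$: since $X\subset C(Q)$ is a solid space, Proposition \ref{pr6}(ii) guarantees that $X$ is an ideal in $C(Q)$, hence a subalgebra of $C_\infty(Q)$. Consequently $e\cdot e=e\in X$ and, for every $x\in X$, also $ex\in X$ (this last fact already follows from solidity, since $|ex|\leqslant|x|$), so the defining identity (\ref{def-der}) may legitimately be invoked for these products.

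With this in hand, I would first apply the derivation identity to the factorization $e=e\cdot e$ and use commutativity of $L^0(\mathcal{B})$ to obtain
\[
\delta(e)=\delta(e)\,e+e\,\delta(e)=2e\,\delta(e).
\]
Multiplying this equality by $e$ and using $e^2=e$ gives $e\,\delta(e)=2e\,\delta(e)$, whence $e\,\delta(e)=0$. Substituting this back into the displayed equation yields $\delta(e)=2e\,\delta(e)=0$, which is the first asserted equality. For the second, I would apply (\ref{def-der}) to the product $e\cdot x$ (which lies in $X$ by the remark above) and use $\delta(e)=0$:
\[
\delta(ex)=\delta(e)\,x+e\,\delta(x)=e\,\delta(x).
\]

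I do not expect a genuine obstacle here, since the computation mirrors the classical fact that a derivation annihilates idempotents. The only step that deserves explicit attention is the domain check described in the first paragraph, namely that $e\cdot e$ and $e\cdot x$ belong to $X$ so that $\delta$ can be evaluated on them; once the subalgebra property from Proposition \ref{pr6}(ii) is recorded, the remainder is the short algebraic manipulation above.
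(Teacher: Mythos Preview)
Your proof is correct and follows exactly the paper's argument: apply the Leibniz rule to $e=e^2$ to get $\delta(e)=2e\delta(e)$, deduce $e\delta(e)=0$ and hence $\delta(e)=0$, then apply Leibniz to $ex$. The only difference is that you make the domain check explicit via Proposition~\ref{pr6}(ii), which the paper leaves implicit.
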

\begin{proof} If $e\in X\cap\mathcal{B}$, then  $\delta(e)=\delta(e^2)=\delta(e)e+e\delta(e)=2e\delta(e)$, and therefore  $e\delta(e)=2e\delta(e)$, i.e. $e\delta(e)=0$, that implies the equality $\delta(e)=0$. Further, for  $x\in X$ we have
$\delta(ex)=\delta(e)x+e\delta(x)=e\delta(x)$.
\end{proof}

Let $\mathcal{B}$ be a complete Boolean algebra, let $Q$ be the Stone space of $\mathcal{B}$. We need the following useful property of derivations on $L^0(\mathcal{B})$.

\begin{theorem}\label{sequence}If $\delta\colon C(Q)\to L^0(\mathcal{B})$ is a non-zero derivation, then there exist a sequence $\{a_n\}_{n=1}^\infty$ in $C(Q)$ and a non-zero idempotent $q\in\mathcal{B}$ such that $|a_n|\leqslant\mathbf{1}$ and $|\delta(a_n)|\geqslant nq $ for all $ n\in \mathbb{N}$.
\end{theorem}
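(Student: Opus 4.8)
The plan is to locate a clopen region on which $\delta$ is ``uniformly unbounded'' by measuring, level by level, where the images of the unit ball of $C(Q)$ can be made large. For $a\in C(Q)$ with $|a|\leqslant\mathbf{1}$ and $n\in\mathbb{N}$ write $e_n(|\delta(a)|)$ for the spectral idempotent $\{t\in Q:|\delta(a)(t)|\leqslant n\}$ of $|\delta(a)|\in L^0_+(\mathcal{B})$, and set
$$P_n=\sup\bigl\{\mathbf{1}-e_n(|\delta(a)|):a\in C(Q),\ |a|\leqslant\mathbf{1}\bigr\},\qquad q=\bigwedge_{n\in\mathbb{N}}P_n.$$
Since $e_n(|\delta(a)|)\leqslant e_{n+1}(|\delta(a)|)$, the sequence $\{P_n\}$ is decreasing, so $q$ is well defined. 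I claim that $q$ is the required idempotent; the two things to establish are that for each $n$ a \emph{single} function $a_n$ realizes $|\delta(a_n)|\geqslant nq$, and that $q\neq\mathbf{0}$.

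For the first point, fix $n$ and consider $D_n=\{e\in\mathcal{B}:e\neq\mathbf{0},\ \exists\,a\in C(Q),\ |a|\leqslant\mathbf{1},\ e|\delta(a)|\geqslant ne\}$. Using the spectral inequality $(\mathbf{1}-e_n(|\delta(a)|))|\delta(a)|\geqslant n(\mathbf{1}-e_n(|\delta(a)|))$ (as in the proof of Proposition \ref{fund}), one checks that $D_n$ minorizes $\mathcal{B}_{P_n}$: a non-zero $f\leqslant P_n$ cannot be disjoint from every $\mathbf{1}-e_n(|\delta(a)|)$, so some $a$ gives $\mathbf{0}\neq f\wedge(\mathbf{1}-e_n(|\delta(a)|))\in D_n$. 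By Theorem \ref{principle} there is then a disjoint family $\{e_i\}_{i\in I}\subset D_n$ with $\sup_i e_i=P_n$, together with witnesses $a_i$ satisfying $|a_i|\leqslant\mathbf{1}$ and $e_i|\delta(a_i)|\geqslant ne_i$. Gluing over the disjoint family, put $a^{(n)}=\sum_{i}e_ia_i$; this is a bounded element of $L^0(\mathcal{B})=C_\infty(Q)$ with $|a^{(n)}|\leqslant\mathbf{1}$, hence $a^{(n)}\in C(Q)$. Since $e_ia^{(n)}=e_ia_i$, Proposition \ref{der} gives $e_i\delta(a^{(n)})=\delta(e_ia^{(n)})=\delta(e_ia_i)=e_i\delta(a_i)$, so that $e_i|\delta(a^{(n)})|\geqslant ne_i$ for all $i$; taking the supremum over the disjoint family (using $\sup_i e_i=P_n$) yields $|\delta(a^{(n)})|\geqslant nP_n\geqslant nq$.

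The main obstacle is the second point, $q\neq\mathbf{0}$, and this is where the non-triviality of $\delta$ must be used. Suppose $q=\mathbf{0}$ and set $R_n=\mathbf{1}-P_n$, so that $R_n\uparrow\mathbf{1}$. By construction $R_n\leqslant e_n(|\delta(a)|)$ for every $a$ with $|a|\leqslant\mathbf{1}$, that is $R_n|\delta(a)|\leqslant nR_n$, and by homogeneity $R_n|\delta(a)|\leqslant n\|a\|_\infty R_n$ for all $a\in C(Q)$. Thus $a\mapsto R_n\delta(a)$ takes bounded values, and by Proposition \ref{der} it factors as a derivation $\widetilde{\delta}$ of the commutative $C^*$-algebra $C(Q_{R_n})=R_nC(Q)$ into itself, via $\widetilde{\delta}(R_na)=R_n\delta(a)$. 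Since every derivation of a commutative $C^*$-algebra is trivial (\cite{Sak}, Lemma 4.1.3), we get $R_n\delta(a)=\mathbf{0}$ for all $a$ and all $n$. Consequently $s(\delta(a))\leqslant P_n$ for every $n$, whence $s(\delta(a))\leqslant\bigwedge_n P_n=q=\mathbf{0}$ and $\delta=0$, contradicting the hypothesis. Therefore $q\neq\mathbf{0}$, and setting $a_n=a^{(n)}$ completes the proof.
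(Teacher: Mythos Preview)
Your proof is correct, but it follows a different route from the paper's.

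The paper first proves an abstract order-theoretic lemma (Lemma~\ref{unbounded}): if a cyclic subset $E\subset L^0_+(\mathcal{B})$ is order-unbounded, then there exist $0\neq q\in\mathcal{B}$ and $x_n\in E$ with $qx_n\geqslant nq$. It then applies this lemma to $E=\{|\delta(a)|:|a|\leqslant\mathbf{1}\}$, after verifying that $E$ is cyclic (via mixing and Proposition~\ref{der}) and order-unbounded (via a self-contained continuity argument: bounded image would force $\delta$ to vanish on step elements, hence everywhere by $\|\cdot\|_\infty$-density). You instead bypass the abstract lemma entirely: you define $q=\bigwedge_n P_n$ explicitly through spectral idempotents, manufacture each $a^{(n)}$ by a direct exhaustion-and-gluing argument on $\mathcal{B}_{P_n}$, and for the non-triviality of $q$ you invoke the $C^*$-algebra fact (\cite{Sak}, Lemma~4.1.3) that a derivation of the commutative $C^*$-algebra $R_nC(Q)\cong C(Q_{R_n})$ into itself must vanish. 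Your argument is more concrete and avoids isolating a separate lemma, at the cost of importing the Sakai result as a black box where the paper gives an elementary density argument; conversely, the paper's Lemma~\ref{unbounded} is a reusable statement about cyclic sets that clarifies the underlying mechanism. Two small presentational points: your ``$\sum_i e_ia_i$'' over a possibly uncountable disjoint family is really the mixing $\mathrm{mix}(e_ia_i)$ extended by zero on $\mathbf{1}-P_n$, and the inequality $|\delta(a^{(n)})|\geqslant nP_n$ should pass through $P_n|\delta(a^{(n)})|\geqslant nP_n$ together with $|\delta(a^{(n)})|\geqslant P_n|\delta(a^{(n)})|$; neither affects correctness.
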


In the proof of Theorem \ref{sequence} we use the notion of a cyclic set in $L^0(\mathcal{B})$ given below.

Let $\{e_i\}_{i\in I}$ be a partition of unity in  $\mathcal{B},x_i\in L^0(\mathcal{B}),i\in I$, where $I$ is an arbitrary index set. Select a unique $x\in L^0(\mathcal{B})$ such that $e_ix=e_ix_i$ for all $i\in I$ (the uniqueness of $x$ follows from the equality $\sup\limits_{i\in I} e_i=\mathbf{1}$ (\cite{book-Kusraev}, 7.3.1)). The element $x$ is called a mixing of the family $\{x_i\}_{i\in I}$ by the partition of unity $\{e_i\}_{i\in I}$ and is denoted by $\underset{i \in I}{\mathrm{mix}}(e_ix_i)$. If $x_i\geqslant 0$ for all ${i\in I}$, then $\underset{i \in I}{\mathrm{mix}}(e_ix_i)=\sup\limits_{i\in I} e_ix_i$.

The set of all mixing of families of elements from $E\subset L^0(\mathcal{B})$ is called a cyclic envelope of a subset $E$ in $L^0(\mathcal{B})$ and is denoted by  $\mathrm{mix}(E)$. Clearly, $E\subset \mathrm{mix}(E)$. If $\mathrm{mix}(E)=E$, then $E$ is said to be a cyclic subset in $L^0(\mathcal{B})$ (\cite{book-Kusraev-2}, 0.3.5). It is clear that $\mathrm{mix(mix}(E))=\mathrm{mix}(E)$. Since for every $x\in L^0(\mathcal{B})$ there exists a partition $\{e_n\}_{n=1}^\infty$ such that $e_nx\in C(Q)$ for all $n$, it follows that $x=\underset{n}{\mathrm{mix}}(e_nx)\in \mathrm{mix}(C(Q))$. Hence, $\mathrm{mix}(C(Q))=L^0(\mathcal{B})$.

\begin{lemma}\label{unbounded} If $E\subset L^0_+(\mathcal{B}),E=\mathrm{mix}(E)$ and $E$ is an order unbounded set in  $L^0(\mathcal{B})$, there exist $0\neq q\in\mathcal{B},\{x_n\}_{n=1}^\infty\subset E$ such that $qx_n\geqslant nq$ for all $n\in\mathbb{N}$.
\end{lemma}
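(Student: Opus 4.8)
The plan is to combine the truncation (spectral) idempotents of the elements of $E$ with the exhaustion principle (Theorem \ref{principle}) and the mixing-closedness $E=\mathrm{mix}(E)$, locating the desired $q$ as the meet of the supports on which $E$ exceeds each level $n$.

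First, for each $x\in E$ (recall $x\geqslant 0$) and each $n\in\mathbb{N}$ I would set $p_n(x)=\mathbf{1}-e_n(|x|)$, the spectral idempotent where $x$ exceeds level $n$; exactly as in the proof of Proposition \ref{fund} one has $n\,p_n(x)\leqslant p_n(x)\,x$. Put $q_n=\sup\{p_n(x):x\in E\}$, so that $q_{n+1}\leqslant q_n$. The first key step is to produce, for each fixed $n$, a single $y_n\in E$ with $q_ny_n\geqslant n q_n$. For this I would check that $D=\{e\in\mathcal{B}:e\neq\mathbf{0},\ e\leqslant p_n(x)\text{ for some }x\in E\}$ minorizes $\mathcal{B}_{q_n}$: a nonzero $e\leqslant q_n=\sup_x p_n(x)$ cannot be disjoint from every $p_n(x)$, so some $e\wedge p_n(x)\neq\mathbf{0}$ lies below $e$ and in $D$. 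Theorem \ref{principle} then yields a disjoint family $\{e_i\}_{i\in I}\subset D$ with $\sup_i e_i=q_n$, and witnesses $x_i\in E$ with $e_i\leqslant p_n(x_i)$, whence $e_ix_i\geqslant n e_i$. Completing $\{e_i\}$ to a partition of unity (adjoining $\mathbf{1}-q_n$ with any fixed element of $E$ attached, when that idempotent is nonzero) and forming $y_n=\underset{i}{\mathrm{mix}}(e_ix_i)$, the hypothesis $E=\mathrm{mix}(E)$ gives $y_n\in E$; since $e_iy_n=e_ix_i$ one computes $q_ny_n=\sup_i e_iy_n\geqslant\sup_i n e_i=n q_n$.

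Next set $q=\bigwedge_n q_n$ and take $x_n=y_n$. Multiplying $q_ny_n\geqslant n q_n$ by $q\leqslant q_n$ yields $q y_n\geqslant n q$ for every $n$, so it remains only to verify $q\neq\mathbf{0}$, which is the crux. I would argue by contradiction: if $q=\mathbf{0}$ then $r_n:=\mathbf{1}-q_n\uparrow\mathbf{1}$, and since $r_n\leqslant\mathbf{1}-p_n(x)=e_n(|x|)$ for every $x\in E$, one gets $r_n x\leqslant n r_n$ for all $x\in E$. Forming the disjoint refinement $d_1=r_1,\ d_n=r_n-r_{n-1}$ (a partition of unity because $\sup_n r_n=\mathbf{1}$) and the mixing $y=\sup_n n\,d_n\in L^0(\mathcal{B})$, one has $d_nx\leqslant n d_n=d_ny$ for every $x\in E$, hence $x=\sup_n d_nx\leqslant y$. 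Thus $y$ would be an order bound for $E$, contradicting order unboundedness; therefore $q\neq\mathbf{0}$, and $q$ together with $\{y_n\}$ is as required.

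The main obstacle is the first step: compressing the local excesses $p_n(x)$ of many different elements into one member $y_n\in E$ that exceeds level $n$ on the whole of $q_n$. This is precisely where both the completeness of $\mathcal{B}$ (to invoke the exhaustion principle) and the structural hypothesis $E=\mathrm{mix}(E)$ are indispensable. Once this localization is in hand, the passage to the common idempotent $q$ and the order-boundedness argument are routine lattice computations in $L^0(\mathcal{B})=C_\infty(Q)$.
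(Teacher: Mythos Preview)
Your argument is correct, but it is organized differently from the paper's proof. The paper first isolates the idempotent $q$ by a purely qualitative criterion: it shows (by contradiction, using exhaustion and mixing to build a global bound) that there exists a nonzero $q$ such that $pE$ is order unbounded for every nonzero $p\leqslant q$. Only after $q$ is fixed does the paper, for each $n$, run a second exhaustion-and-mixing to produce $x_n\in E$ with $qx_n\geqslant nq$. You instead work quantitatively from the outset via spectral idempotents: you define $q_n=\sup_{x\in E}p_n(x)$, use exhaustion and mixing to get $y_n\in E$ with $q_ny_n\geqslant nq_n$, and then take $q=\bigwedge_n q_n$; the contradiction step (if $q=\mathbf{0}$ then the complements $r_n=\mathbf{1}-q_n$ increase to $\mathbf{1}$ and the mixing $y=\underset{n}{\mathrm{mix}}(n\,d_n)$ bounds $E$) plays the role that the paper's first step plays. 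Both proofs rest on the same two ingredients---Theorem~\ref{principle} and the hypothesis $E=\mathrm{mix}(E)$---but your route gives an explicit formula for $q$ and packages the ``boundedness if $q=\mathbf{0}$'' argument more concretely through the level sets, whereas the paper's route separates the location of $q$ from the construction of the $x_n$ and never names the spectral idempotents.
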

\begin{proof}[Proof of the lemma] Let us show that there exists a non-zero idempotent $q\in\mathcal{B}$ such that the set $pE$ is an order unbounded in $L^0(\mathcal{B})$ for all $0\neq p\in \mathcal{B}_q$. If this is not the case, for every  $0\neq q\in\mathcal{B}$ there exist $0\neq p_q\in\mathcal{B}_q, a_p\in p_qL^0_+(\mathcal{B})$ such that $0\leqslant p_q x\leqslant a_p$ for all $x\in E$. By Theorem \ref{principle}, there exist a partition of unity $\{e_i\}_{i\in I}$ and elements $a_i\in e_iL^0_+(\mathcal{B})$, such that $0\leqslant e_ix\leqslant a_i$ for all $x\in E$ and $i\in I$. Set $a=\underset{i \in I}{\mathrm{mix}}(e_ia_i)$. Then $a\in L^0_+(\mathcal{B})$ and for $x\in E$ the relationships hold $e_ix\leqslant e_i a_i=e_i a$ for all $i\in I$, that implies $0\leqslant x\leqslant a$. It means that $E$ is order bounded in  $L^0(\mathcal{B})$, which is a contradiction. Consequently, there exists a non-zero idempotent  $q\in\mathcal{B}$ such that $pE$ is order unbounded in  $L^0(\mathcal{B})$ for all $0\neq p\in\mathcal{B}_q$.

Fix $n\in\mathbb{N}$ and for every $0\neq p\in\mathcal{B}_q$ select an element $x_{n,p}\in pE$ which is not dominated by the element $np$. It means that there exists  $0\neq r_p\in\mathcal{B}_p$ such that $r_px_{n,p}\geqslant nr_p$. Using Theorem \ref{principle} again, select a partition  $\{z_j\}_{j\in J}$ of the element $q$ and elements $x_{n,j}\in z_jE$, such that $nz_j\leqslant z_jx_{n,j}$. Since $E$ is a cyclic subset, then $x_n=\underset{j \in J}{\mathrm{mix}}(z_jx_{n,j})\in qE$, in addition,
$z_jx_n=z_jx_{n,j}\geqslant nz_j$ for all $j\in J$. Hence $qx_n=x_n\geqslant nq$ for all $n\in\mathbb{N}$.
\end{proof}

Let us proceed to the proof of Theorem \ref{sequence}.

Assume that the set $E=\{|\delta(a)|:a\in C(Q),|a|\leqslant\mathbf{1}\}$ is order bounded in  $L^0(\mathcal{B})$, i.e. there exists $x\in L^0_+(\mathcal{B})$ such that $|\delta(a)|\leqslant x$ for all $a\in C(Q)$ with $|a|\leqslant\mathbf{1}$. Let us show that in this case $\delta=0$.

Let $\{y_n\}\in C(Q)$ and $t_n=\|y_n-y\|_\infty\rightarrow 0$ for some $y\in C(Q)$. Then
\begin{gather}\label{(2)}
|\delta(y_n)-\delta(y)|=|\delta(y_n-y)|=t_n\left|\delta\left(\frac{y_n-y}{t_n}\right)\right|\leqslant t_n x
\end{gather}
for all $n\in\mathbb{N}$ with $t_n\neq 0$.

Since $\delta(e)=0$ for all $e\in\mathcal{B}$ (see Proposition \ref{der}), it follows that $\delta(x)=0$ for all step elements $x=\sum\limits_{i=1}^n\lambda_ie_i$, where $\lambda_i\in\mathbb{R},e_i\in\mathcal{B},i=\overline{1,n},n\in\mathbb{N}$. For every $b\in C(Q)$ there exists a sequence of step elements  $\{x_n\}_{n=1}^\infty$ such that $\|b-x_n\|_\infty\rightarrow 0$. Due to (\ref{(2)}), we have  $|\delta(b)|=|\delta(b)-\delta(x_n)|\leqslant\|b-x_n\|_\infty \,x$, that implies $\delta(b)=0$, which is contradiction to the assertion of Theorem \ref{sequence}.

Thus, the set  $E$ is order unbounded in $L^0(\mathcal{B})$. Let us show that $E=\mathrm{mix}(E)$. Let $\{e_i\}_{i\in I}$ be a partition of unity and let $\{x_i\}_{i\in I}$ be a family of elements from $E$. Since  $x_i=|\delta(a_i)|,a_i\in C(Q),|a_i|\leqslant\mathbf{1}$, then $e_ix_i=|e_i\delta(a_i)|=|\delta(e_ia_i)|,i\in I$. Setting $a=\underset{i \in I}{\mathrm{mix}}(e_ia_i)$ we have that $a\in C(Q),|a|\leqslant\mathbf{1}$ and
$$e_i|\delta(a)|=|e_i\delta(a)|=|\delta(e_ia)|=|\delta(e_ia_i)|=e_ix_i,i\in I, $$
i.e. $\underset{i \in I}{\mathrm{mix}}(e_ix_i)=|\delta(a)|\in E$. Consequently, $E=\mathrm{mix}(E)$. Therefore, by Lemma \ref{unbounded}, there exist $0\neq q\in\mathcal{B},\{a_n\}\in C(Q)$ with $|a_n|\leqslant\mathbf{1}$, such that \\$|\delta(a_n)|\geqslant  q|\delta(a_n)|\geqslant nq$ for all $n\in\mathbb{N}$.\qquad\qquad\qquad\qquad\qquad\qquad\qquad\qquad\qquad$\Box$

Theorem \ref{sequence} implies the following significant property of the support $s(\delta)$ of a derivation $\delta$ from $C(Q)$ into $L^0(\mathcal{B})$.
\begin{claim}
\label{ad_claim_foralln}
Let $\mathcal{B}$ be a complete Boolean algebra, let $Q$ be the Stone space of $\mathcal{B}$, let $\delta\colon C(Q)\to L^0(\mathcal{B})$ be a non-zero derivation. Then for every $n\in\mathbb{N}$ there exists $b_n\in C(Q)$, such that $|b_n|\leqslant s(\delta)$ and $|\delta(b_n)|\geqslant ns(\delta)$, in particular, $s(\delta(b_n))=s(\delta)$.
\end{claim}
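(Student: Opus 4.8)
The plan is to upgrade the local estimate of Theorem \ref{sequence} --- which only produces a large value of $|\delta|$ on \emph{some} non-zero piece $q$ --- to a uniform estimate over the whole support $s(\delta)$, by localizing, exhausting, and gluing. The first step is to observe that $\delta$ stays non-zero after localization below any piece of its support. For a non-zero $e\in\mathcal{B}$ the map $\delta_e(x)=e\delta(x)$ is again a derivation from $C(Q)$ into $L^0(\mathcal{B})$ (commutativity gives $\delta_e(xy)=\delta_e(x)y+x\delta_e(y)$), and, using $s(e\delta(x))=e\,s(\delta(x))$ from Proposition \ref{pr5} together with the distributive law $e\wedge\sup_x s(\delta(x))=\sup_x\bigl(e\wedge s(\delta(x))\bigr)$ of the complete Boolean algebra $\mathcal{B}$, one computes $s(\delta_e)=e\,s(\delta)$. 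Hence $\delta_e\neq 0$ whenever $\mathbf{0}\neq e\leqslant s(\delta)$.

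Next, fix $n$. Applying Theorem \ref{sequence} to $\delta_e$ for a non-zero $e\leqslant s(\delta)$ yields a non-zero $q$ and an $a\in C(Q)$ with $|a|\leqslant\mathbf{1}$ and $|\delta_e(a)|\geqslant nq$; since $s(\delta_e(a))\leqslant e$ we get $q\leqslant e$, and since $q\leqslant s(\delta)$, Proposition \ref{der} gives $\delta(s(\delta)a)=s(\delta)\delta(a)$, so replacing $a$ by $s(\delta)a$ we may assume $|a|\leqslant s(\delta)$ while keeping $|\delta(a)|\geqslant q|\delta(a)|\geqslant nq$. Thus the set
$$D=\{q\in\mathcal{B}:q\neq\mathbf{0},\ \exists\,a\in C(Q),\ |a|\leqslant s(\delta),\ |\delta(a)|\geqslant nq\}$$
minorizes $\mathcal{B}_{s(\delta)}$. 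By Theorem \ref{principle} there is a disjoint family $\{q_j\}_{j\in J}\subset D$ with $\sup_j q_j=s(\delta)$; for each $j$ choose $a_j\in C(Q)$ with $|a_j|\leqslant s(\delta)$ and $|\delta(a_j)|\geqslant nq_j$.

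Finally, I would glue the $a_j$ together. Completing $\{q_j\}$ to a partition of unity by $q_*=\mathbf{1}-s(\delta)$ (with $a_*=0$), set $b_n=\underset{j}{\mathrm{mix}}(q_j a_j)$. Then $|b_n|=\sup_j q_j|a_j|\leqslant\sup_j q_j=s(\delta)\leqslant\mathbf{1}$, and an element of $L^0(\mathcal{B})$ whose modulus is dominated by $\mathbf{1}$ is continuous, so $b_n\in C(Q)$ and $|b_n|\leqslant s(\delta)$. Since $q_j b_n=q_j a_j$, Proposition \ref{der} gives $q_j\delta(b_n)=\delta(q_j b_n)=\delta(q_j a_j)=q_j\delta(a_j)$, whence $q_j|\delta(b_n)|=q_j|\delta(a_j)|\geqslant nq_j$ for every $j$. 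Taking the supremum over the disjoint family, and noting $s(\delta(b_n))\leqslant s(\delta)$ so that $s(\delta)|\delta(b_n)|=|\delta(b_n)|$, yields $|\delta(b_n)|=s(\delta)|\delta(b_n)|=\sup_j q_j|\delta(b_n)|\geqslant n\sup_j q_j=n\,s(\delta)$; then $s(\delta(b_n))=s(\delta)$ follows from $|\delta(b_n)|\geqslant n\,s(\delta)\geqslant s(\delta)$.

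The main obstacle I anticipate is not any single computation but making the gluing step legitimate: one must ensure that the mixed element $b_n$ actually lies in $C(Q)$ --- which is precisely why the sharper estimate $|a_j|\leqslant s(\delta)\leqslant\mathbf{1}$, rather than merely $|a_j|\leqslant\mathbf{1}$, is arranged in advance --- and that $\delta$ genuinely commutes with the Boolean cut-offs $q_j$ on this mixing, for which Proposition \ref{der} is the essential tool.
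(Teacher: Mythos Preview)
Your proof is correct and follows essentially the same localize--exhaust--glue strategy as the paper's: show that below every non-zero $e\leqslant s(\delta)$ the derivation remains non-zero, invoke Theorem \ref{sequence} to produce a piece $q\leqslant e$ with the desired estimate, apply Theorem \ref{principle} to obtain a partition of $s(\delta)$, and mix. The one noteworthy difference is that the paper passes through the Stone-space isomorphism $\Phi\colon eL^0(\mathcal{B})\to L^0(\mathcal{B}_e)$ and applies Theorem \ref{sequence} to the transferred derivation on $C(Q_e)$, whereas you work directly with the derivation $\delta_e(x)=e\delta(x)$ from $C(Q)$ into $L^0(\mathcal{B})$; your route is a mild streamlining that avoids the bookkeeping of $\Phi$ but is otherwise the same argument.
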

\begin{proof}
If $0\neq e\leqslant s(\delta),e\in\mathcal{B}$, then there exists $x\in C(Q)$, such that \\$p=es(\delta(x))\neq 0$. Since $y=px\in pC(Q)$, $\delta(y)=p\delta(x)\neq 0$ and \\$\delta(pz)=p\delta(z)\in pL^0(\mathcal{B})$ (see Proposition \ref{der}) for all $z\in C(Q)$, it follows that the restriction $\delta_p$ of the derivation $\delta$ on $pC(Q)$ is a non-zero derivation from $pC(Q)$ into $pL^0(\mathcal{B})$. Let $\Phi$ be the algebraic and lattice isomorphism from $pL^0(\mathcal{B})$ onto  $L^0(\mathcal{B}_p)$, for which $\Phi(g)=g$ for all $g\in p\mathcal{B}$ (see Section \ref{sec_prel}) and let $Q_p$ be the Stone space of the Boolean algebra $\mathcal{B}_p$. It is clear that $\Phi(pC(Q))=C(Q_p)$. Consider the mapping $\hat{\delta}_p\colon C(Q_p)\to L^0(\mathcal{B}_p)$, defined by the equality $\hat{\delta}_p(x)=\Phi(\delta_p(\Phi^{-1}(x))),x\in C(Q_p)$.
Easy to see that $\hat{\delta}_p$ is a linear mapping, in addition, for all  $x,y\in C(Q_p)$ we have
\begin{gather*}
\begin{split}
\hat{\delta}_p(xy)&=\Phi\Bigl(\delta_p\bigl(\Phi^{-1}(x)\Phi^{-1}(y)\bigr)\Bigr)= \Phi\Bigl(\bigl(\delta_p(\Phi^{-1}(x))\Phi^{-1}(y)+\Phi^{-1}(x)\delta_p(\Phi^{-1}(y))\bigr)\Bigr)\\
&=\Phi\Bigl(\delta_p\bigl(\Phi^{-1}(x)\bigr)\Bigr)y +x\Phi\Bigl(\delta_p\bigl(\Phi^{-1}(y)\bigr)\Bigr)=\hat{\delta}_p(x)y+x\hat{\delta}_p(y).
\end{split}
\end{gather*}
Consequently, $\hat{\delta}_p$ is a derivation from $C(Q_p)$ into $L^0(\mathcal{B}_p)$. Since $\delta_p$ is a non-zero derivation, it follows that  $\hat{\delta}_p$ is also a non-zero derivation. By Theorem \ref{sequence}, there exist $a(p,n)\in C(Q_p)$ and $0\neq q_p\in\mathcal{B}_p$ such that $|a(p,n)|\leqslant p$ and $|\hat{\delta}_p(a(p,n))|\geqslant nq_p$. Hence, for $b(p,n)=\Phi^{-1}(a(p,n))$ we have $b(p,n)\in pC(Q),\\ |b(p,n)|\leqslant p$ and
$$|\delta(b(p,n))|=|\delta_p(b(p,n))|=|\Phi^{-1}(\hat{\delta}_p(a(p,n)))|\geqslant nq_p.$$

According to Theorem \ref{principle}, there exist a partition $\{e_i\}_{i\in I}$ of the idempotent $s(\delta)$ and $b(i,n)\in e_iC(Q)$, such that $|b(i,n)|\leqslant e_i$ and $|\delta(b(i,n))|\geqslant ne_i$ for all $i\in I$. For the element $b_n=\underset{i \in I}{\mathrm{mix}}(e_ib(i,n))$ we have that $b_n\in s(\delta)C(Q),\\|b_n|\leqslant s(\delta)$ and $$e_i|\delta(b_n)|=|\delta(e_ib_n)|=|\delta(b(i,n))|\geqslant ne_i$$
for all $i\in I$, and therefore $|\delta(b_n)|\geqslant ns(\delta)$.
\end{proof}

Using Proposition \ref{ad_claim_foralln} the following useful description of the range of values of non-zero derivations, defined on  $C(Q)$, is established.

\begin{theorem}\label{ad_domain} Let $\mathcal{B}$ be a complete Boolean algebra, let $Q$ be the Stone space of $\mathcal{B}$, let $Y$ be a solid space in $L^0(\mathcal{B})$ and let $\delta\colon C(Q)\to Y$ be a non-zero derivation. Then $s(\delta)Y=s(\delta)L^0(\mathcal{B})$, in particular, if $s(\delta)=\mathbf{1}$, then $Y=L^0(\mathcal{B})$.
\end{theorem}
\begin{proof}
By Proposition \ref{der} for every $a\in C(Q)$ we have that \\$\delta\bigl((\mathbf{1}-s(\delta))a\bigl)=\bigl(\mathbf{1}-s(\delta)\bigl)s(\delta(a))\delta(a)=0$, in addition, $\delta(C(Q))\subset s(\delta)Y$. Using Proposition \ref{eY} and passing to the derivation $\delta_{s(\delta)}\colon s(\delta)C(Q)\to s(\delta)Y$, defined by the equality $\delta_{s(\delta)}(a)=\delta(a),a\in s(\delta)C(Q)$, we may assume that $s(\delta)=\mathbf{1}$.

If $Y\neq L^0(\mathcal{B})$, then there exist a partition of unity $\{e_n\}_{n=1}^\infty$ and $x\in L^0(\mathcal{B})$, such that $e_nx=ne_n$ for all $n\in\mathbb{N}$ and $x\notin Y$. Using Proposition \ref{ad_claim_foralln}, select $b_n\in C(Q)$ such that $|b_n|\leqslant\mathbf{1}$ and $|\delta(b_n)|\geqslant n\mathbf{1}$ for all $n\in \mathbb{N}$. Since $|e_nb_n|\leqslant e_n$, we have that $b=\underset{n \in\mathbb{N}}{\mathrm{mix}}(e_nb_n)\in C(Q)$, in addition, $$e_n|\delta(b)|=|\delta(e_nb)|=|\delta(e_nb_n)|=|e_n\delta(b_n)|\geqslant ne_n=e_nx$$ for every $n\in\mathbb{N}$. Thus, $|\delta(b)|\geqslant x\geqslant 0$. Since $\delta(b)\in Y$ and $Y$ is a solid space, it follows that $x\in Y$. This contradiction implies that $Y=L^0(\mathcal{B})$.
\end{proof}

Let us give an example of a derivation $\delta\colon C(Q)\to L^0(\mathcal{B})$ with $s(\delta)=\mathbf{1}$. Let $\mathcal{B}$ be a properly non $\sigma$-distributive Boolean algebra. By Corollary \ref{cor-Kus}, for every non-zero idempotent $e\in\mathcal{B}$ there exists a non-zero derivation \\$\delta_e\colon L^0(\mathcal{B}_e)\to L^0(\mathcal{B}_e)$. Denote by $Q_e$ the Stone space of $\mathcal{B}_e$ and consider the restriction $\hat{\delta}_e$ of the derivation $\delta_e$ on $C(Q_e)$.

We claim that $\hat{\delta}_e$ is a non-zero derivation. Suppose that $\hat{\delta}_e(a)=0$ for all $a\in C(Q_e)$. For every $x\in L^0(\mathcal{B}_e)$ there exists a partition $\{e_n\}_{n=1}^\infty$ of the idempotent $e$, such that $e_nx\in C(Q_e)$ for all $n\in\mathbb{N}$. According to Proposition \ref{der}, we have that
$$e_n\delta_e(x)=\delta_e(e_nx)=\hat{\delta}_e(e_nx)=0, n\in\mathbb{N},$$
that implies the equality $\delta_e(x)=(\sup\limits_{n\in\mathbb{N}} e_n)\delta_e(x)=0, x\in L^0(\mathcal{B}_e)$, which is a contradiction since $\delta_e$ is a non-zero derivation. Consequently, \\
 $\hat{\delta}_e\colon C(Q_e)\to L^0(\mathcal{B}_e)$ is also a non-zero derivation.

Thus, identifying $L^0(\mathcal{B}_e)$ with $eL^0(\mathcal{B})$ and $C(Q_e)$ with $eC(Q)$ (see Section \ref{sec_prel}), we have that for every $0\neq e\in\mathcal{B}$ there exists a non-zero derivation $\delta_e\colon eC(Q)\to eL^0(\mathcal{B})$ with the non-zero support $s(\delta_e)\leqslant e$. For $a\in eC(Q)$ we have $\delta_e\bigl((e-s(\delta_e))a\bigr)=(e-s(\delta_e))s(\delta_e(a))\delta_e(a)=0$, i.e. $\delta_e(a)=\delta_e(s(\delta_e)a)$, and therefore the restriction $\delta_{s(\delta_e)}$ of the derivation $\delta_e$ on $s(\delta_e)C(Q)$ is a non-zero derivation with the support $s(\delta_{s(\delta_e)})=s(\delta_e)$.
By Theorem \ref{principle}, there exist a partition $\{e_i\}_{i\in I}$ of unity $\mathbf{1}$ and non-zero derivations $\delta_{e_i}\colon e_iC(Q)\to e_iL^0(\mathcal{B})$, such that $s(\delta_{e_i})=e_i$ for all $i\in I$.

Now, define the mapping $\delta\colon C(Q)\to L^0(\mathcal{B})$, by setting $\delta(x)=\underset{i\in I}{\mathrm{mix}}(e_i\delta_{e_i}(e_ix)),\\ x\in C(Q)$. It is clear that $\delta$ is a linear mapping, moreover, for all $x,y\in C(Q)$ we have that
\begin{gather*}
\begin{split}
\delta(xy)&=\underset{i\in I}{\mathrm{mix}}(e_i\delta_{e_i}(e_ixe_iy))=
\underset{i\in I}{\mathrm{mix}}(e_i(\delta_{e_i}(e_ix)e_iy+e_ix\delta_{e_i}(e_iy)))\\
&=\underset{i\in I}{\mathrm{mix}}(e_i\delta_{e_i}(e_ix)e_iy)+\underset{i\in I}{\mathrm{mix}}(e_ixe_i\delta_{e_i}(e_iy))=\delta(x)y+x\delta(y).
\end{split}
\end{gather*}
Consequently, $\delta\colon C(Q)\to L^0(\mathcal{B})$ is a derivation, in addition,
$$e_is(\delta(x))=s(e_i\delta(x))=s(\delta_{e_i}(e_ix))$$
for all $x\in C(Q), i\in I$. It means that
\begin{gather*}
\begin{split}
s(\delta)&=\sup\{s(\delta(x)):x\in C(Q)\}=\sup\{e_is(\delta(x)):x\in C(Q),i\in I\}\\
&=\sup\{e_is(\delta_{e_i}(a)):a\in e_iC(Q),i\in I\}=\sup\{e_is(\delta_{e_i}):i\in I\}=\sup\limits_{i\in I} e_i=\mathbf{1}.
\end{split}
\end{gather*}

Thus, we have the following
\begin{claim}\label{prop_sup_iden}
Let $\mathcal{B}$ be a properly non $\sigma$-distributive Boolean algebra, let $Q$ be the Stone space of $\mathcal{B}$. Then there exists a derivation $\delta\colon C(Q)\to L^0(\mathcal{B})$, such that $s(\delta)=\mathbf{1}$.
\end{claim}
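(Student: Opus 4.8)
The plan is to manufacture the derivation with full support by gluing together local derivations living on the blocks of a suitable partition of unity, using the hypothesis that $\mathcal{B}$ is \emph{properly} non $\sigma$-distributive to guarantee that a non-zero local derivation exists above every non-zero idempotent. Indeed, fix an arbitrary non-zero $e\in\mathcal{B}$. Since $\mathcal{B}_e$ is not $\sigma$-distributive, Corollary \ref{cor-Kus} produces a non-zero derivation $\delta_e\colon L^0(\mathcal{B}_e)\to L^0(\mathcal{B}_e)$, which via the isomorphism $\Phi$ from Section \ref{sec_prel} is identified with a non-zero derivation on $eL^0(\mathcal{B})$.

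The first substantive point is that restricting $\delta_e$ to the bounded algebra $eC(Q)$ keeps it non-zero. I would argue by contradiction using mixings: every $x\in eL^0(\mathcal{B})$ can be written as $x=\underset{n}{\mathrm{mix}}(e_n x)$ along a partition $\{e_n\}$ of $e$ with $e_n x\in eC(Q)$, and since $\delta_e(e_n x)=e_n\delta_e(x)$ by Proposition \ref{der}, a vanishing of $\delta_e$ on $eC(Q)$ would force $e_n\delta_e(x)=0$ for all $n$, whence $\delta_e(x)=0$ for every $x$, contradicting non-triviality. Passing to the support and restricting, I obtain for every non-zero $e$ a non-zero derivation on $s(\delta_e)C(Q)$ whose support is precisely $s(\delta_e)\leqslant e$. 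Thus the collection of all such supports minorizes $\mathcal{B}$, and Theorem \ref{principle} yields a partition of unity $\{e_i\}_{i\in I}$ together with non-zero derivations $\delta_{e_i}\colon e_iC(Q)\to e_iL^0(\mathcal{B})$ with $s(\delta_{e_i})=e_i$.

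I would then assemble the global map by mixing, setting $\delta(x)=\underset{i\in I}{\mathrm{mix}}(e_i\delta_{e_i}(e_ix))$ for $x\in C(Q)$, which is well defined and linear because $\mathcal{B}$ is complete. The Leibniz identity follows from $e_ix\cdot e_iy=e_ixy$ together with the Leibniz rule for each $\delta_{e_i}$, the product rule distributing cleanly through the mixing. For the support computation one uses $e_i\,s(\delta(x))=s(e_i\delta(x))=s(\delta_{e_i}(e_ix))$; taking suprema over $x\in C(Q)$ and over $i\in I$ gives $s(\delta)=\sup_i s(\delta_{e_i})=\sup_i e_i=\mathbf{1}$, as required.

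The step I expect to be the main obstacle is the local-to-global passage rather than any isolated calculation: specifically, verifying that restriction from $L^0(\mathcal{B}_e)$ to the bounded algebra $C(Q_e)$ does not annihilate the derivation, and that the mixing of the $\delta_{e_i}$ really defines a single derivation on all of $C(Q)$ and not merely on each block. Both hinge on Proposition \ref{der} (that derivations kill idempotents and commute with multiplication by them) together with completeness of $\mathcal{B}$, which is what makes mixings available and unique.
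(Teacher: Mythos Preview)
Your proposal is correct and follows essentially the same route as the paper: obtain a non-zero derivation on $L^0(\mathcal{B}_e)$ from Corollary~\ref{cor-Kus}, show its restriction to $eC(Q)$ is non-zero via the mixing/partition argument and Proposition~\ref{der}, pass to the support to get a derivation with full support on its block, apply Theorem~\ref{principle} to extract a partition $\{e_i\}$ with $s(\delta_{e_i})=e_i$, and then mix to build the global $\delta$ and compute $s(\delta)=\mathbf{1}$. The paper's argument differs only in presentation, not in substance.
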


Let $X$ be an ideal in $C(Q)$ and let $\delta$ be a derivation from $X$ into $L^0(\mathcal{B})$. Denote by $Y(\delta)$ the smallest solid space in $L^0(\mathcal{B})$, containing the image $\delta(X)$ of the derivation $\delta$.

Proposition \ref{prop_sup_iden} and Theorem \ref{ad_domain} imply that in case when $\mathcal{B}$ is a properly non $\sigma$-distributive Boolean algebra there exists a derivation $\delta\colon C(Q)\to L^0(\mathcal{B})$, such that $Y(\delta)=L^0(\mathcal{B})$.

Let $X$ be an arbitrary ideal in $C(Q)$. A derivation $\delta\colon X\to L^0(\mathcal{B})$ is called \textit{band preserving}, if $s(\delta(x))\leqslant s(x)$ for all $x\in X$ (compare \cite{G-K-K}, 1.1.1). It is clear that for band preserving derivations the inequality $s(\delta)\leqslant s(X)$ always holds. By Proposition \ref{der}, for a derivation $\delta\colon C(Q)\to L^0(\mathcal{B})$ (respectively, $\delta\colon L^0(\mathcal{B})\to L^0(\mathcal{B})$) we have that $\delta(x)=\delta(s(x)x)=s(x)\delta(x)$, i.e.\\ $s(\delta(x))\leqslant s(x)$ for all $x\in C(Q)$ (respectively, $x\in L^0(\mathcal{B})$). Consequently, any derivation from $C(Q)$ (respectively, $L^0(\mathcal{B})$) into $L^0(\mathcal{B})$ is band preserving. At the same time, there exist ideals $X$ in $C(Q)$ with $s(X)\neq\mathbf{1}$ and derivations $\delta\colon X\to L^0(\mathcal{B})$ which are not band preserving (see Example 5.2). Let us give useful criterion for a derivation to be band preserving.

\begin{theorem}\label{band_preser_der}
Let $\mathcal{B}$ be a complete Boolean algebra with the Stone space $Q$, let $X$ be an ideal in  $C(Q)$. For a derivation  $\delta\colon X\to L^0(\mathcal{B})$ the following conditions are equivalent:

(i). $\delta$ is a band preserving derivation;

(ii). $s(\delta)\leqslant s(X)$.
\end{theorem}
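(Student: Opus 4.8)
The plan is to prove the two implications separately, with the forward direction being essentially immediate and the reverse direction carrying all the content.

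For $(i)\Rightarrow(ii)$: if $\delta$ is band preserving, then $s(\delta(x))\leqslant s(x)\leqslant s(X)$ for every $x\in X$, where the second inequality is just the definition $s(X)=\sup\{s(x):x\in X\}$. Taking the supremum over $x\in X$ on the left-hand side gives $s(\delta)=\sup\{s(\delta(x)):x\in X\}\leqslant s(X)$, which is $(ii)$. This requires no further work.

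For $(ii)\Rightarrow(i)$: fix $x\in X$; I want to show $s(\delta(x))\leqslant s(x)$, equivalently that the idempotent $g:=s(\delta(x))(\mathbf{1}-s(x))$ equals $\mathbf{0}$. The strategy is proof by contradiction. Suppose $g\neq\mathbf{0}$. Since $g\leqslant s(\delta(x))\leqslant s(\delta)\leqslant s(X)$ by hypothesis $(ii)$, we have $g\in s(X)\mathcal{B}$, so Proposition \ref{fund}(i) applies and produces a non-zero idempotent $p\in X\cap\mathcal{B}$ with $p\leqslant g$. Now $p\leqslant g\leqslant\mathbf{1}-s(x)$ forces $p\,s(x)=\mathbf{0}$, hence $px=p\,s(x)\,x=0$. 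Since $p\in X\cap\mathcal{B}$, Proposition \ref{der} gives $p\delta(x)=\delta(px)=\delta(0)=0$. On the other hand, $p\leqslant g\leqslant s(\delta(x))$, so by Proposition \ref{pr5}(ii) the support of $p\delta(x)$ equals $p\,s(\delta(x))=p\neq\mathbf{0}$, contradicting $p\delta(x)=0$. Therefore $g=\mathbf{0}$ and $s(\delta(x))\leqslant s(x)$.

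The main obstacle --- and the only genuinely non-trivial step --- is the passage from the condition $s(\delta)\leqslant s(X)$ on the supremum of supports to a pointwise statement about the arbitrary element $x$. This is exactly where Proposition \ref{fund}(i) is indispensable: it converts the order inequality $g\leqslant s(X)$ into the existence of a concrete non-zero idempotent $p$ that lies inside the ideal $X$ itself (not merely in $\mathcal{B}$), which is what allows me to invoke the product rule via Proposition \ref{der}. Without the membership $p\in X$, the identity $\delta(px)=p\delta(x)$ would not be available, since $\delta$ is only defined on $X$. The remainder is routine support bookkeeping using Proposition \ref{pr5}.
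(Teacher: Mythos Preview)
Your proof is correct and follows essentially the same route as the paper's: both argue $(ii)\Rightarrow(i)$ by contradiction, using Proposition~\ref{fund}(i) to produce a non-zero idempotent $p\in X\cap\mathcal{B}$ below $(\mathbf{1}-s(x))s(\delta(x))$, and then Proposition~\ref{der} to derive $p\delta(x)=\delta(px)=0$ against $s(p\delta(x))=p\neq\mathbf{0}$. Your application of Proposition~\ref{fund}(i) directly to $g$ is in fact slightly more streamlined than the paper's version, which interposes an auxiliary element before invoking that proposition.
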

\begin{proof}
The implication $(i)\Rightarrow(ii)$ is obvious.

$(ii)\Rightarrow(i)$. Suppose that there exists $x_0\in X$, such that $e=(\mathbf{1}-s(x_0))s(\delta(x_0))\neq 0$. Since $e\leqslant s(\delta(x_0))\leqslant s(\delta)\leqslant s(X)$, there exists $x\in X$, such that $q=es(x)\neq 0$. By Proposition \ref{fund} (i), there exists $0\neq p\in X\cap\mathcal{B}$, such that $p\leqslant q\leqslant e\leqslant s(\delta(x_0))$. Consequently (see Proposition \ref{der}),
$$0\neq ps(\delta(x_0))=s(p\delta(x_0))=s(\delta(px_0))=s(\delta(pex_0))=s(\delta(0))=0.$$
This contradiction implies that $\delta$ is a band preserving derivation.
\end{proof}

\begin{corollary}\label{band_pres_der_FIP}
If $X$ is an ideal in $C(Q)$ with $s(X)=\mathbf{1}$, then any derivation $\delta\colon X\to L^0(\mathcal{B})$ is band preserving.
\end{corollary}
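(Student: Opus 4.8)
The plan is to deduce this directly from Theorem \ref{band_preser_der}, which already supplies the equivalence between a derivation being band preserving and the support inequality $s(\delta)\leqslant s(X)$. Since Theorem \ref{band_preser_der} applies to any ideal $X$ in $C(Q)$ and any derivation $\delta\colon X\to L^0(\mathcal{B})$, the only thing to verify here is that the hypothesis $s(X)=\mathbf{1}$ forces condition (ii) of that theorem to hold automatically.

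First I would recall that $s(\delta)=\sup\{s(\delta(x)):x\in X\}$ is by construction an element of the Boolean algebra $\mathcal{B}$, and that $\mathbf{1}$ is the unity of $\mathcal{B}$, so $s(\delta)\leqslant\mathbf{1}$ holds trivially. Substituting $s(X)=\mathbf{1}$, this gives $s(\delta)\leqslant\mathbf{1}=s(X)$, which is exactly condition (ii) of Theorem \ref{band_preser_der}. Invoking the implication $(ii)\Rightarrow(i)$ of that theorem then yields that $\delta$ is band preserving, which is the assertion.

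There is essentially no obstacle: the entire substance of the argument is contained in Theorem \ref{band_preser_der}, and the present statement is the special case obtained when the support of the ideal is the whole unit. The point worth emphasizing in the write-up is simply that, once one has the criterion $s(\delta)\leqslant s(X)$, the condition becomes vacuous precisely when $s(X)=\mathbf{1}$, since every idempotent is dominated by $\mathbf{1}$. Thus the proof reduces to a one-line application of the preceding theorem together with this trivial bound.

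\begin{proof}
By Theorem \ref{band_preser_der}, a derivation $\delta\colon X\to L^0(\mathcal{B})$ is band preserving if and only if $s(\delta)\leqslant s(X)$. Since $s(\delta)\in\mathcal{B}$, we always have $s(\delta)\leqslant\mathbf{1}$. As $s(X)=\mathbf{1}$, it follows that $s(\delta)\leqslant\mathbf{1}=s(X)$, and therefore $\delta$ is band preserving.
\end{proof}
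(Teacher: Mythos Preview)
Your proposal is correct and matches the paper's intent exactly: the paper states this corollary immediately after Theorem \ref{band_preser_der} with no proof at all, since the implication $s(\delta)\leqslant\mathbf{1}=s(X)$ together with $(ii)\Rightarrow(i)$ of that theorem is precisely the one-line argument you give.
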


\section{Extension of derivations}\label{sec_ext}
In this section we give the construction of extension of a band preserving derivation $\delta\colon X\to L^0(\mathcal{B})$, acting on an ideal of the algebra $C(Q)$, up to a derivation $\hat{\delta}\colon L^0(\mathcal{B})\to L^0(\mathcal{B})$ (compare \cite{B-Ch-S}, Theorem 3.1).

\begin{theorem}\label{extension}
Let $\mathcal{B}$ be a complete Boolean algebra, let $Q$ be the Stone space of $\mathcal{B}$, let $X$ be an ideal in the algebra $C(Q)$ and let $\delta\colon X\to L^0(\mathcal{B})$ be a band preserving derivation. Then there exists a derivation $\hat{\delta}\colon L^0(\mathcal{B})\to L^0(\mathcal{B})$ such that $\hat{\delta}(x)=\delta(x)$ for all $x\in X$. In addition, if $s(X)=\mathbf{1}$, then such derivation $\hat{\delta}$ is unique.
\end{theorem}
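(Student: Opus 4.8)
The plan is to build $\hat\delta$ in two stages --- first spreading $\delta$ across all of $C(Q)$ by mixing over a partition sitting inside the ideal, then pushing the resulting derivation up from the bounded functions $C(Q)$ to the universally complete lattice $L^0(\mathcal{B})$ by a second mixing --- and finally to settle uniqueness by a localization argument.

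First I would invoke Proposition \ref{fund}(ii) to fix a partition $\{e_i\}_{i\in I}$ of the support $s(X)$ with $e_i\in X$. Since $X$ is an ideal in $C(Q)$, for every $a\in C(Q)$ and every $i$ the product $e_i a$ lies in $X$, so $\delta(e_i a)$ is defined and, by Proposition \ref{der}, equals $e_i\delta(e_i a)\in e_iL^0(\mathcal{B})$. I would then define $\delta_1\colon C(Q)\to s(X)L^0(\mathcal{B})$ by $\delta_1(a)=\underset{i\in I}{\mathrm{mix}}(\delta(e_i a))$ (completing $\{e_i\}$ to a partition of $\mathbf 1$ by adjoining $\mathbf 1-s(X)$ with value $0$). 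Linearity is immediate, and the Leibniz identity follows componentwise: multiplying by $e_i$ turns $\delta_1(ab)$ into $\delta((e_ia)(e_ib))$ and, after applying the Leibniz rule for $\delta$ on the pair $e_ia,e_ib\in X$, into $e_i(\delta_1(a)b+a\delta_1(b))$; since this holds for all $i$ and all terms lie in $s(X)L^0(\mathcal{B})$, the uniqueness of the mixing (\cite{book-Kusraev}, 7.3.1) yields the identity. To see that $\delta_1$ extends $\delta$, note that for $x\in X$ one has $e_i\delta_1(x)=\delta(e_ix)=e_i\delta(x)$ by Proposition \ref{der}, and because $\delta$ is band preserving we have $s(\delta(x))\leqslant s(\delta)\leqslant s(X)$, so $\delta(x)\in s(X)L^0(\mathcal{B})$ and hence $\delta_1(x)=\delta(x)$.

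Next I would extend $\delta_1$ to $\hat\delta\colon L^0(\mathcal{B})\to L^0(\mathcal{B})$. Given $x\in L^0(\mathcal{B})$, the identity $\mathrm{mix}(C(Q))=L^0(\mathcal{B})$ supplies a partition $\{f_n\}$ with $f_nx\in C(Q)$, and I would set $\hat\delta(x)=\underset{n}{\mathrm{mix}}(f_n\delta_1(f_nx))$. The crucial point --- and the step I expect to be the main obstacle --- is well-definedness: passing to a common refinement of two such partitions, one must show $f_ng_m\delta_1(f_nx)=f_ng_m\delta_1(g_mx)$, which rests on the locality relation $\delta_1(ea)=e\delta_1(a)$ for $e\in\mathcal{B}$, $a\in C(Q)$ (Proposition \ref{der} applied to the derivation $\delta_1$) together with the uniqueness of mixings. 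Once $\hat\delta$ is well defined, linearity, the extension property $\hat\delta|_{C(Q)}=\delta_1$, and the Leibniz identity are all verified by choosing a single partition $\{f_n\}$ adapted to the elements involved, reducing each identity to the corresponding identity for $\delta_1$ multiplied through by $f_n$ and then reassembled by the uniqueness of mixing (using $(f_nx)(f_ny)=f_nxy\in C(Q)$ for the product rule).

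Finally, for uniqueness when $s(X)=\mathbf 1$, suppose $\hat\delta_1,\hat\delta_2$ both extend $\delta$ and put $\Delta=\hat\delta_1-\hat\delta_2$, a derivation on $L^0(\mathcal{B})$ vanishing on $X$. By Proposition \ref{der}, $\Delta(ea)=e\Delta(a)$ for $e\in\mathcal{B}$. Taking the partition $\{e_i\}\subset X$ of $\mathbf 1$, for any $a\in C(Q)$ we get $e_i\Delta(a)=\Delta(e_ia)=0$ (as $e_ia\in X$), whence $\Delta(a)=0$ since $\sup_i e_i=\mathbf 1$; extending this through a partition $\{f_n\}$ with $f_nx\in C(Q)$ gives $f_n\Delta(x)=\Delta(f_nx)=0$ for every $x\in L^0(\mathcal{B})$, so $\Delta=0$ and $\hat\delta$ is unique.
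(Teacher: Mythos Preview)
Your proof is correct and follows the same two-stage mixing strategy as the paper: first spread $\delta$ from the ideal $X$ to $C(Q)$ via a partition $\{e_i\}\subset X$ of $s(X)$, then push up from $C(Q)$ to $L^0(\mathcal{B})$ via countable partitions $\{f_n\}$ with $f_nx\in C(Q)$; uniqueness when $s(X)=\mathbf{1}$ is handled identically. The paper carries out the two stages in the opposite order and, for the case $s(X)\neq\mathbf{1}$, passes through the isomorphism $\Phi\colon s(X)L^0(\mathcal{B})\to L^0(\mathcal{B}_{s(X)})$ to reduce to the order-dense situation; your device of simply adjoining $\mathbf{1}-s(X)$ to the partition with value $0$ is a bit more direct and spares that reduction, at no cost in rigor.
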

\begin{proof}Firstly, let us assume that $X=C(Q)$. For every $x\in L^0(\mathcal{B})$ there exists a partition of unity $\{e_n\}_{n\in\mathbb{N}}$ such that $e_nx\in C(Q)$ for all $n\in\mathbb{N}$. Set $\hat{\delta}(x)=\underset{n \in \mathbb{N}}{\mathrm{mix}}(e_n\delta(e_nx))$. Let us show that this definition does not depend on a choice of the partition of unity $\{e_n\}_{n\in\mathbb{N}}$. If $\{q_n\}_{n\in\mathbb{N}}$ is another partition of unity, for which  $q_nx\in C(Q)$ for all $n\in\mathbb{N}$ and $y=\underset{n \in \mathbb{N}}{\mathrm{mix}}(q_n\delta(q_nx))$, then
$$e_mq_ny=e_mq_n\delta(q_nx)=q_ne_m\delta(e_mq_nx)=q_ne_m\delta(e_mx)=q_ne_m\hat{\delta}(x)$$
for all $n,m\in\mathbb{N}$. Since $\sup\limits_{m\in\mathbb{N}}e_m=\sup\limits_{n\in\mathbb{N}}q_n=\mathbf{1}$, then $y=\hat{\delta}(x)$. Thus, the mapping $\hat{\delta}\colon L^0(\mathcal{B})\to L^0(\mathcal{B})$ is correctly defined.

If $x,y\in L^0(\mathcal{B})$ and $\{e_n\}_{n\in\mathbb{N}},\{p_n\}_{n\in\mathbb{N}}$ are partitions of unity  such that $e_nx,p_ny\in C(Q)$ for all $n\in\mathbb{N}$, then
\begin{gather*}
\begin{split}
e_np_m\hat{\delta}(x+y)&=e_np_m\delta(e_np_m(x+y))=e_np_m\delta(e_np_mx)+e_np_m\delta(e_np_my)=
\\
&=e_np_m\bigl(\delta(e_nx)+\delta(p_my))=e_np_m(\hat{\delta}(x)+\hat{\delta}(y))
\end{split}
\end{gather*}
for all $n,m\in\mathbb{N}$. Consequently, $\hat{\delta}(x+y)=\hat{\delta}(x)+\hat{\delta}(y)$. Similarly, it is established that  $\hat{\delta}(\lambda x)=\lambda\hat{\delta}(x)$ for all $\lambda\in\mathbb{R}, x\in C(Q)$. Thus, $\hat{\delta}$ is a linear mapping. Further
\begin{gather*}
\begin{split}
e_np_m\hat{\delta}(xy)&=e_np_m\delta((e_nx)(p_my))=e_np_m\bigl(\delta(e_nx)p_my+e_nx\delta(p_my)\bigl)=
\\
&=e_np_m(\hat{\delta}(x)y+x\hat{\delta}(y))
\end{split}
\end{gather*}
for all $n,m\in\mathbb{N}$, that implies  the equality $\hat{\delta}(xy)=\hat{\delta}(x)y+x\hat{\delta}(y)$.

Consequently, $\hat{\delta}\colon L^0(\mathcal{B})\to L^0(\mathcal{B})$ is a derivation, in addition, for $x\in C(Q)$ and a partition of unity  $\{e_n\}_{n\in\mathbb{N}}$ the equalities $e_n\delta(x)=e_n\delta(e_nx)=e_n\hat{\delta}(x),\\n\in\mathbb{N}$ hold, i.e. $\delta(x)=\hat{\delta}(x)$.

Assume that $\delta_1\colon L^0(\mathcal{B})\to L^0(\mathcal{B})$ is another derivation, for which \\$\delta_1(x)=\delta(x)$ for all $x\in C(Q)$. Then, by  Proposition \ref{der}, for $x\in L^0(\mathcal{B})$ and a partition of unity $\{e_n\}_{n\in\mathbb{N}}$ such that $e_nx\in C(Q),n\in\mathbb{N}$ we have $$e_n\delta_1(x)=e_n\delta_1(e_nx)=e_n\hat{\delta}(e_nx)=e_n\hat{\delta}(x)$$ for all $n\in\mathbb{N}$. Since $\sup\limits_{n\in\mathbb{N}}e_n=\mathbf{1}$, it follows that $\delta_1(x)=\hat{\delta}(x)$ for all $x\in L^0(\mathcal{B})$, i.e. $\delta_1=\hat{\delta_1}$.

Now, let $X$ be an arbitrary ideal in $C(Q)$ such that $s(X)=\mathbf{1}$. Due to Proposition \ref{pr6}(ii), $X$ is order-dense solid space in $L^0(\mathcal{B})$. By Proposition \ref{fund}(ii) there exists a partition of unity $\{e_i\}_{i\in I}$ contained in  $X$. For all $i\in I,\lambda\in\mathbb{R}$ we have that $\lambda e_i\in X$, that implies the inclusion $e_iC(Q)\subset X$ for all $i\in I$.

Define the mapping $\overline{\delta}\colon C(Q)\to L^0(\mathcal{B})$ by setting
$$\overline{\delta}(x)=\underset{i \in I}{\mathrm{mix}}(e_i\delta(e_ix)),x\in C(Q).$$
As above it is established that  $\overline{\delta}$ is a derivation from $C(Q)$ into $L^0(\mathcal{B})$, in addition, $\overline{\delta}(x)=\delta(x)$ for all $x\in X$. Assume that $\delta_2\colon C(Q)\to L^0(\mathcal{B})$ is another derivation for which $\delta_2(x)=\delta(x),x\in X$. If $x\in C(Q)$, then $e_ix\in X$ for all $i\in I$ and $$e_i\delta_2(x)=e_i\delta_2(e_ix)=e_i\delta(e_ix)=e_i\overline{\delta}(e_ix)=e_i\overline{\delta}(x),$$
 i.e. $\delta_2=\overline{\delta}$. Thus, $\overline{\delta}$ is a unique derivation from  $C(Q)$ into $L^0(\mathcal{B})$ such that $\overline{\delta}(x)=\delta(x)$ for all $x\in X$.
From the first part of the proof it follows that there exists a unique derivation  $\hat{\delta}\colon L^0(\mathcal{B})\to L^0(\mathcal{B})$ such that $\hat{\delta}(x)=\overline{\delta}(x)=\delta(x)$ for all $x\in X$.

Now, consider an arbitrary ideal $X$ in $C(Q)$. Let $e=s(X)\neq\mathbf{1}$ and $\Phi$ is the algebraic and lattice isomorphism from $eL^0(\mathcal{B})$ onto $L^0(\mathcal{B}_e)$ (see Section \ref{sec_prel}). By Proposition \ref{eY} $\Phi(X)$ is a order-dense solid space in $L^0(\mathcal{B}_e)$. Furthermore, since $s(\delta)\leqslant s(X)$ (see Theorem \ref{band_preser_der}), we have $\delta(X)\subset eL^0(\mathcal{B})=\Phi^{-1}(L^0(\mathcal{B}_e))$.
Consequently, there is correctly defined a mapping $\delta_0\colon \Phi(X)\to L^0(\mathcal{B}_e)$ by the equality $\delta_0(\Phi(x))=\Phi(\delta(x))$. It is clear that $\delta_0$ is a derivation from the order-dense solid space $\Phi(X)$ with values in $L^0(\mathcal{B}_e)$. From the proven above, it follows that there exists a derivation $\hat{\delta}_0\colon L^0(\mathcal{B}_e)\to L^0(\mathcal{B}_e)$ such that  $\hat{\delta}_0(\Phi(x))=\delta_0(\Phi(x))$ for all $x\in X$. Consider the linear mapping $\hat{\delta}_1\colon eL^0(\mathcal{B})\to eL^0(\mathcal{B})$,
 defined by the equality $\hat{\delta}_1(ex)=\Phi^{-1}(\hat{\delta}_0(\Phi(ex))),x\in L^0(\mathcal{B})$. For all  $x,y\in L^0(\mathcal{B})$ we have
\begin{gather*}
\begin{split}
\hat{\delta}_1(exey)&=\Phi^{-1}(\hat{\delta}_0(\Phi(ex)\Phi(ey)))= \Phi^{-1}\bigl(\hat{\delta}_0(\Phi(ex))\Phi(ey)+\Phi(ex)\hat{\delta}_0(\Phi(ey))\bigl)=\\
&=\Phi^{-1}(\hat{\delta}_0(\Phi(ex)))ey+ex\Phi^{-1}(\hat{\delta}_0(\Phi(ey)))=\hat{\delta}_1(ex)ey+ex\hat{\delta}_1(ey).
\end{split}
\end{gather*}

Consequently, $\hat{\delta}_1$ is a derivation from $eL^0(\mathcal{B})$ into $eL^0(\mathcal{B})$, in addition, $$\hat{\delta}_1(x)=\hat{\delta}_1(ex)=\Phi^{-1}(\hat{\delta}_0(\Phi(x)))=\Phi^{-1}(\delta_0(\Phi(x)))=\delta(x)$$ for all $x\in X$.

Extend the derivation $\hat{\delta}_1\colon eL^0(\mathcal{B})\to eL^0(\mathcal{B})$ up to a derivation \\$\hat{\delta}\colon L^0(\mathcal{B})\to L^0(\mathcal{B})$ by setting  $\hat{\delta}(x)=\hat{\delta}_1(ex)$. It is clear that $\hat{\delta}$ is a derivation from $L^0(\mathcal{B})$ into $L^0(\mathcal{B})$ and $\hat{\delta}(x)=\hat{\delta}_1(x)=\delta(x)$ for all $x\in X$.
\end{proof}

Note that Corollary \ref{band_pres_der_FIP} and Theorem \ref{extension} immediately imply the following 

\begin{corollary}\label{extension_fund}
Let $\mathcal{B}$ be a complete Boolean algebra, let $Q$ be the Stone space of $\mathcal{B}$ and let $X$ be a order-dense solid space in $C(Q)$. Then for any \\derivation $\delta\colon X\to L^0(\mathcal{B})$ there exists a unique derivation $\hat{\delta}\colon L^0(\mathcal{B})\to L^0(\mathcal{B})$ such that $\hat{\delta}(x)=\delta(x)$ for all $x\in X$.
\end{corollary}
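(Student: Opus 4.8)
The plan is to obtain the corollary as an immediate consequence of Corollary \ref{band_pres_der_FIP} and Theorem \ref{extension}; the work reduces entirely to checking that the hypotheses of those two results are satisfied, since the substantive constructions have already been carried out there.

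First I would unpack what it means for $X$ to be an order-dense solid space in $C(Q)$: by the definitions in Section \ref{sec_prel} this says precisely that $X$ is a solid space with $X\subset C(Q)$ and $s(X)=\mathbf{1}$. Because $X$ is a solid subspace contained in $C(Q)$, Proposition \ref{pr6}(ii) upgrades this to the statement that $X$ is an ideal in $C(Q)$. Thus at the outset we know that $X$ is an ideal in $C(Q)$ carrying full support $s(X)=\mathbf{1}$, which is exactly the configuration the two cited results require.

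Next, given an arbitrary derivation $\delta\colon X\to L^0(\mathcal{B})$, I would invoke Corollary \ref{band_pres_der_FIP}: since $X$ is an ideal in $C(Q)$ with $s(X)=\mathbf{1}$, the derivation $\delta$ is automatically band preserving. This is the one point where the full-support hypothesis is genuinely used—without it band preservation can fail, as the forthcoming Example 5.2 is meant to show. With band preservation in hand, Theorem \ref{extension} applies directly: it produces a derivation $\hat{\delta}\colon L^0(\mathcal{B})\to L^0(\mathcal{B})$ with $\hat{\delta}(x)=\delta(x)$ for all $x\in X$, and because $s(X)=\mathbf{1}$ the same theorem guarantees that this extension is unique.

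I do not anticipate any real obstacle. The only verification with any content is the passage from ``order-dense solid space in $C(Q)$'' to ``ideal in $C(Q)$ with full support,'' handled by Proposition \ref{pr6}(ii); everything else is a citation. The deeper machinery—automatic band preservation under full support, and the mixing-based construction of the extension together with its uniqueness—lives in Corollary \ref{band_pres_der_FIP} and Theorem \ref{extension}, so the corollary itself is just their composition.
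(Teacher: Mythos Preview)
Your proposal is correct and matches the paper's approach exactly: the paper states the corollary with the single line ``Corollary \ref{band_pres_der_FIP} and Theorem \ref{extension} immediately imply the following,'' and your write-up simply unpacks this, including the small check via Proposition \ref{pr6}(ii) that an order-dense solid subspace of $C(Q)$ is an ideal with $s(X)=\mathbf{1}$.
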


Let $X$ be an arbitrary nonzero ideal in $C(Q)$ with the support $e=s(X)$.
Let us show that, when the Boolean algebra $\mathcal{B}_e$ is not $\sigma$-distributive, there exists a non-zero band preserving derivation $\delta$ from $X$ into $L^0(\mathcal{B})$. Indeed, in this case, by Corollary \ref{cor-Kus}, there exists a non-zero derivation $\delta\colon L^0(\mathcal{B}_e)\to L^0(\mathcal{B}_e)$. Let $\Phi$ be the algebraic and lattice isomorphism from $eL^0(\mathcal{B})$ onto $L^0(\mathcal{B}_e)$.
Consider the restriction $\delta_0$ of the derivation $\delta$ on $\Phi(X)$. Since $\Phi(X)$ is a order-dense solid space in  $L^0(\mathcal{B}_e)$ (see Proposition \ref{eY}), it follows by corollary \ref{extension_fund} that there exists a unique derivation $\hat{\delta}_0\colon L^0(\mathcal{B}_e)\to L^0(\mathcal{B}_e)$ such that $\hat{\delta}_0(\Phi(x))=\delta_0(\Phi(x))$ for all $x\in X$. Since $\delta(\Phi(x))=\delta_0(\Phi(x))$ for all $x\in X$, due to the uniqueness of the derivation  $\hat{\delta}_0$, we have $\delta=\hat{\delta}_0$. If $\delta_0$ is a trivial derivation, then, due to the construction of $\hat{\delta}_0$ (see the proof of Theorem \ref{extension}), we have that $\hat{\delta}_0$ is also trivial, and therefore $\delta=0$, which is a contradiction. Consequently, $\delta_0$ is a non-zero derivation from $\Phi(X)$ with values in  $L^0(\mathcal{B}_e)$. Construct the mapping $\overline{\delta}\colon X\to L^0(\mathcal{B})$ by setting $\overline{\delta}(x)=\Phi^{-1}(\delta_0(\Phi(x)))$. As in the proof of Theorem \ref{extension}, it is established that the mapping $\overline{\delta}$ is a derivation. In addition, it is clear that $\overline{\delta}$ is a non-zero derivation from $X$ with values in $L^0(\mathcal{B})$ and $s(\delta)\leqslant s(X)$. According to Theorem \ref{band_preser_der}, constructed derivation $\delta$ is a band preserving derivation.

Now, suppose that the Boolean algebra $\mathcal{B}_e$ is $\sigma$-distributive and $\delta$ is a band preserving derivation from the ideal $X$ with values in  $L^0(\mathcal{B})$. Let us show that, in this case, $\delta$ is trivial. Let, as before, $\Phi$ be the algebraic and lattice isomorphism from $eL^0(\mathcal{B})$ onto $L^0(\mathcal{B}_e)$. Since $s(\delta)\leqslant s(X)$ (see Theorem \ref{band_preser_der}), the derivation $\delta_1\colon \Phi(X)\to L^0(\mathcal{B}_e)$ is correctly defined by the equality $\delta_1(\Phi(x))=\Phi(\delta(x)), x\in X$. According to Corollary  \ref{extension_fund}, there exists a derivation $\hat{\delta}_1\colon L^0(\mathcal{B}_e)\to L^0(\mathcal{B}_e)$, such that $\hat{\delta}_1(\Phi(x))=\delta_1(\Phi(x))$ for all $x\in X$. Since the Boolean algebra $\mathcal{B}_e$ is $\sigma$-distributive, by Corollary \ref{cor-Kus}, we have $\hat{\delta}_1=0$, that implies the equality $\delta(x)=0$ for all $x\in X$. Thus, the following version of Theorem \ref{Kus} holds.

\begin{theorem}\label{version-Kus}
Let $\mathcal{B}$ be a complete Boolean algebra, let $Q$ be the Stone space of $\mathcal{B}$, let $X$ be an ideal in  $C(Q)$. The following conditions are equivalent:

(i). The Boolean algebra $\mathcal{B}_{s(X)}$ is $\sigma$-distributive;

(ii). There are no non-zero band preserving derivations from $X$ with values in  $L^0(\mathcal{B})$.
\end{theorem}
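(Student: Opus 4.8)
The plan is to prove the equivalence $(i)\Leftrightarrow(ii)$ by reducing the problem on the ideal $X$ to the already-established dichotomy on the whole algebra $L^0(\mathcal{B}_e)$, where $e=s(X)$. The essential observation is that the extension machinery of Theorem~\ref{extension} together with Corollary~\ref{extension_fund} lets me transport any band preserving derivation on $X$ to an honest derivation on $L^0(\mathcal{B}_e)$ and back, so that $X$ carries a non-zero band preserving derivation precisely when $L^0(\mathcal{B}_e)$ carries a non-zero derivation. The latter is controlled by Corollary~\ref{cor-Kus}, which says $L^0(\mathcal{B}_e)$ has no non-zero derivations if and only if $\mathcal{B}_e$ is $\sigma$-distributive.

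For the implication $(i)\Rightarrow(ii)$, I would start from a band preserving derivation $\delta\colon X\to L^0(\mathcal{B})$. By Theorem~\ref{band_preser_der} we have $s(\delta)\leqslant s(X)=e$, so $\delta(X)\subset eL^0(\mathcal{B})$, and via the isomorphism $\Phi\colon eL^0(\mathcal{B})\to L^0(\mathcal{B}_e)$ the formula $\delta_1(\Phi(x))=\Phi(\delta(x))$ defines a derivation $\delta_1\colon\Phi(X)\to L^0(\mathcal{B}_e)$. Since $\Phi(X)$ is order-dense in $L^0(\mathcal{B}_e)$ by Proposition~\ref{eY}, Corollary~\ref{extension_fund} yields a derivation $\hat{\delta}_1\colon L^0(\mathcal{B}_e)\to L^0(\mathcal{B}_e)$ extending $\delta_1$. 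Assuming $\mathcal{B}_e$ is $\sigma$-distributive, Corollary~\ref{cor-Kus} forces $\hat{\delta}_1=0$, hence $\delta_1=0$ on $\Phi(X)$ and $\delta=0$ on $X$.

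For the converse $(ii)\Rightarrow(i)$, I would prove the contrapositive: assuming $\mathcal{B}_e$ is \emph{not} $\sigma$-distributive, I construct a non-zero band preserving derivation on $X$. By Corollary~\ref{cor-Kus} there is a non-zero derivation $\delta\colon L^0(\mathcal{B}_e)\to L^0(\mathcal{B}_e)$; restricting it to the order-dense solid space $\Phi(X)$ gives a derivation $\delta_0$. The key point is to verify that this restriction is still non-zero: by the uniqueness clause of Corollary~\ref{extension_fund}, the unique extension of $\delta_0$ back to $L^0(\mathcal{B}_e)$ must coincide with $\delta$, so if $\delta_0$ vanished then its extension, and hence $\delta$, would vanish too—a contradiction. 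Pulling $\delta_0$ back through $\Phi^{-1}$ by $\overline{\delta}(x)=\Phi^{-1}(\delta_0(\Phi(x)))$ defines a non-zero derivation on $X$ whose support satisfies $s(\overline{\delta})\leqslant s(X)$, so it is band preserving by Theorem~\ref{band_preser_der}.

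The main obstacle is the non-vanishing of the restriction in the converse direction: a priori a non-zero derivation on $L^0(\mathcal{B}_e)$ might act trivially on the smaller space $\Phi(X)$. The uniqueness statement in Corollary~\ref{extension_fund} is exactly what rules this out, and recognizing that this uniqueness is the lever—rather than attempting a direct computation on generators of the ideal—is the conceptual heart of the argument. Everything else is routine transport of structure through the lattice isomorphism $\Phi$, whose compatibility with products, moduli, and supports (so that $\Phi(p)=p$ on idempotents) has already been recorded in Section~\ref{sec_prel}.
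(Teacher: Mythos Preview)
Your proof is correct and follows essentially the same route as the paper: both directions reduce to Corollary~\ref{cor-Kus} by transporting through the isomorphism $\Phi$ and invoking the extension/uniqueness statement of Corollary~\ref{extension_fund}, with the non-vanishing of the restriction $\delta_0$ in the converse direction handled via the uniqueness clause exactly as the paper does. Your phrasing of that step (if $\delta_0=0$ then both $0$ and $\delta$ extend it, so $\delta=0$ by uniqueness) is in fact slightly cleaner than the paper's appeal to the explicit construction of the extension, but the substance is identical.
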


Now, consider the complex case. Let $\mathbb{X}$ be an ideal in the algebra $\mathbb{C}(Q)$ and let $(\mathbb{Y},\|\cdot\|_\mathbb{Y})$ be a solid space in  $L^0_\mathbb{C}(\mathcal{B})$. Then $X=\mathbb{X}_h$ is an ideal in the algebra $C(Q)$, $Y=\mathbb{Y}_h$ is a solid space in $L^0(\mathcal{B})$ and $\mathbb{X}=X\varoplus iX, \mathbb{Y}=Y\varoplus iY$. As in case of  $\mathbb{X}=L^0_\mathbb{C}(\mathcal{B})$, a derivation $\delta\colon\mathbb{X}\to\mathbb{Y}$ is called a $*$-derivation, if
$\delta(\overline{x})=\overline{\delta(x)}$ for all $x\in \mathbb{X}$. In this case $\delta(\mathbb{X}_h)\subset \mathbb{Y}_h$ and the restriction of $\delta$ on $X$ is a (real) derivation from $X$ into $Y$.

For any derivation $\delta\colon \mathbb{X}\to \mathbb{Y}$ consider the mappings 
$\delta_\mathbbm{Re}$ and $\delta_\mathbbm{Im}$ from $\mathbb{X}$ into $\mathbb{Y}$, defined by the equalities (\ref{self_adjoint_der}).
Easy to see that $\delta_\mathbbm{Re},\delta_\mathbbm{Im}$ are $*$-derivations from $\mathbb{X}$ into $\mathbb{Y}$, in addition $\delta=\delta_\mathbbm{Re}+i\delta_\mathbbm{Im}$. As in the case of the field $\mathbb{R}$, a derivation $\delta\colon \mathbb{X}\to\mathbb{Y}$ is called  \textit{band preserving}, if $s(\delta(x))\leqslant s(x)$ for all $x\in\mathbb{X}$. It is clear that a derivation $\delta\colon \mathbb{X}\to\mathbb{Y}$ is band preserving if and only if the derivations $\delta_\mathbbm{Re}$ and $\delta_\mathbbm{Im}$ are band preserving, that, in its turn, holds if and only if the restrictions of $\delta_\mathbbm{Re}$ and $\delta_\mathbbm{Im}$ on $X$ are band preserving. Therefore, in particular, for (complex) derivations $\delta\colon \mathbb{X}\to L^0_\mathbb{C}(\mathcal{B})$ the assertions of Theorem \ref{band_preser_der} and Corollary \ref{band_pres_der_FIP} are preserved.

Repeating the proof of Corollary \ref{cor-Kus}, we obtain the following complex variant of Theorem \ref{version-Kus}.
\begin{theorem}\label{version-Kus_complex}
Let $\mathcal{B}$ be a complete Boolean algebra, let $Q$ be the Stone space of  $\mathcal{B}$ 
and let $\mathbb{X}$ be an ideal in $C(Q)$. The following conditions are equivalent:

(i). The Boolean algebra $s(\mathbb{X})\mathcal{B}$ is $\sigma$-distributive;

(ii). Every band preserving derivation from $\mathbb{X}$ into $L^0_\mathbb{C}(\mathcal{B})$ is trivial.
\end{theorem}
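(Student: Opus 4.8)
The plan is to reduce the complex statement to its real counterpart, Theorem \ref{version-Kus}, in exactly the way Corollary \ref{cor-Kus} was reduced to Theorem \ref{Kus}. First I would fix the dictionary between the two settings: writing $X=\mathbb{X}_h$, the set $X$ is an ideal in $C(Q)$, one has $\mathbb{X}=X\varoplus iX$, and since $s(\mathbb{X})=s(\mathbb{X}_h)=s(X)$ (recorded in Section \ref{sec_prel}), the Boolean algebra $s(\mathbb{X})\mathcal{B}$ is nothing but $\mathcal{B}_{s(X)}$. Thus condition $(i)$ here is literally the hypothesis of Theorem \ref{version-Kus} applied to the ideal $X$, and the whole argument consists in transporting (non)triviality of derivations across the splitting $\mathbb{X}=X\varoplus iX$.

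For $(i)\Rightarrow(ii)$, let $\delta\colon\mathbb{X}\to L^0_\mathbb{C}(\mathcal{B})$ be band preserving and decompose it as $\delta=\delta_\mathbbm{Re}+i\delta_\mathbbm{Im}$ via \eqref{self_adjoint_der}. As established just before the theorem, $\delta_\mathbbm{Re}$ and $\delta_\mathbbm{Im}$ are band preserving $*$-derivations, and their restrictions to $X$ are (real) band preserving derivations from $X$ into $L^0(\mathcal{B})$. Since $s(\mathbb{X})\mathcal{B}=\mathcal{B}_{s(X)}$ is $\sigma$-distributive, Theorem \ref{version-Kus} forces both restrictions to vanish; hence $\delta(x)=\delta_\mathbbm{Re}(x)+i\delta_\mathbbm{Im}(x)=0$ for every $x\in X$. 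I would then use that $\delta$ is complex linear together with $\mathbb{X}=X\varoplus iX$: for $z=x+iy$ with $x,y\in X$ one gets $\delta(z)=\delta(x)+i\delta(y)=0$, so $\delta=0$.

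For the converse I would argue by contraposition. If $\mathcal{B}_{s(X)}$ is not $\sigma$-distributive, Theorem \ref{version-Kus} provides a non-zero (real) band preserving derivation $\delta_0\colon X\to L^0(\mathcal{B})$. I would define its complexification $\hat{\delta}\colon\mathbb{X}\to L^0_\mathbb{C}(\mathcal{B})$ by $\hat{\delta}(x+iy)=\delta_0(x)+i\delta_0(y)$ for $x,y\in X$. Just as in the proof of Corollary \ref{cor-Kus}, a direct check shows that $\hat{\delta}$ is a (complex) derivation; moreover it is a $*$-derivation, since $\hat{\delta}(\overline{z})=\overline{\hat{\delta}(z)}$, so by the band preserving criterion preceding the theorem its band preserving property follows from that of its restriction $\hat{\delta}|_X=\delta_0$, which holds by hypothesis. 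As $\delta_0\neq 0$ gives $\hat{\delta}\neq 0$, we contradict $(ii)$, completing the equivalence.

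The only technical content lies in the two routine verifications already supplied by the paper's machinery: that taking real and imaginary parts of $\delta$ (equivalently, restricting $\delta_\mathbbm{Re},\delta_\mathbbm{Im}$) yields genuine real derivations on $X$ that inherit the band preserving property, and that the complexification $\hat{\delta}$ satisfies the Leibniz rule and is again band preserving. Both mirror the computations in Corollary \ref{cor-Kus} and in the decomposition $\delta=\delta_\mathbbm{Re}+i\delta_\mathbbm{Im}$; I expect no genuine obstacle beyond keeping the real/complex bookkeeping straight, since $\sigma$-distributivity enters only through the already-proven real Theorem \ref{version-Kus}.
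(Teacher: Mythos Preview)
Your proposal is correct and follows exactly the route the paper indicates: the paper's ``proof'' of Theorem \ref{version-Kus_complex} consists solely of the remark that one repeats the argument of Corollary \ref{cor-Kus}, i.e., splits a complex derivation into its $*$-derivation components $\delta_\mathbbm{Re},\delta_\mathbbm{Im}$ (restricting to real derivations on $X=\mathbb{X}_h$) and, conversely, complexifies a real derivation. You have carried out precisely this reduction to Theorem \ref{version-Kus}, including the band preserving bookkeeping via $s(\mathbb{X})=s(X)$, so nothing further is needed.
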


Theorems \ref{version-Kus} and \ref{version-Kus_complex} and Corollary \ref{band_pres_der_FIP} imply the following
\begin{corollary}\label{version-Kus-FIP}
For a complete Boolean algebra $\mathcal{B}$ with the Stone space $Q$ and a order-dense solid space $X\subset C(Q)$ (respectively, $\mathbb{X}\subset\mathbb{C}(Q)$) the following conditions are equivalent:

(i). The Boolean algebra $\mathcal{B}$ is $\sigma$-distributive;

(ii). Any derivation from $X$ into $L^0(\mathcal{B})$ (respectively, from
$\mathbb{X}$ into $L^0_\mathbb{C}(\mathcal{B})$) is trivial.
\end{corollary}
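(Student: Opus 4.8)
The plan is to recognize that the extra hypothesis ``order-dense'' is exactly the statement $s(X)=\mathbf{1}$ (respectively $s(\mathbb{X})=\mathbf{1}$), and that this single fact simultaneously (a) collapses the relative algebra $\mathcal{B}_{s(X)}$ of Theorem \ref{version-Kus} to all of $\mathcal{B}$ and (b) forces every derivation to be band preserving through Corollary \ref{band_pres_der_FIP}. Once both reductions are in place, the desired equivalence is just a special case of Theorem \ref{version-Kus} (and of Theorem \ref{version-Kus_complex} in the complex case), so I expect no genuine obstacle here.

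First I would recall that, by definition, an order-dense solid space $X\subset C(Q)$ is one with $s(X)=\mathbf{1}$; by Proposition \ref{pr6}(ii) such an $X$ is automatically an ideal in $C(Q)$, so Theorem \ref{version-Kus} applies to it. Since $s(X)=\mathbf{1}$, the relative algebra appearing in Theorem \ref{version-Kus}(i) is $\mathcal{B}_{s(X)}=\mathcal{B}_{\mathbf{1}}=\mathcal{B}$, so condition (i) of the present corollary (``$\mathcal{B}$ is $\sigma$-distributive'') is literally condition (i) of Theorem \ref{version-Kus}.

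Next I would invoke Corollary \ref{band_pres_der_FIP}: because $s(X)=\mathbf{1}$, every derivation $\delta\colon X\to L^0(\mathcal{B})$ is automatically band preserving. Hence the band preserving derivations from $X$ into $L^0(\mathcal{B})$ are exactly all the derivations, and the assertion ``there are no non-zero band preserving derivations'' in Theorem \ref{version-Kus}(ii) coincides with ``every derivation is trivial'', which is precisely condition (ii) here. Chaining these two identifications through Theorem \ref{version-Kus} yields the real-valued equivalence.

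For the complex case I would argue identically, substituting $\mathbb{C}(Q)$, $L^0_\mathbb{C}(\mathcal{B})$ and Theorem \ref{version-Kus_complex} for their real counterparts; here $s(\mathbb{X})=\mathbf{1}$ makes $s(\mathbb{X})\mathcal{B}=\mathcal{B}$, and one uses the remark stated just after Theorem \ref{version-Kus_complex} that the complex analogues of Theorem \ref{band_preser_der} and Corollary \ref{band_pres_der_FIP} remain valid, so that every derivation from an order-dense $\mathbb{X}$ into $L^0_\mathbb{C}(\mathcal{B})$ is again band preserving. The only point needing verification in the whole argument is this translation of ``order-dense'' into $s(X)=\mathbf{1}$ together with the two cited facts; there is no real difficulty, and the proof is a purely formal combination of the two theorems with the band-preserving criterion.
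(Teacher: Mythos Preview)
Your proposal is correct and follows essentially the same route as the paper, which simply records that the corollary is an immediate consequence of Theorems \ref{version-Kus}, \ref{version-Kus_complex} and Corollary \ref{band_pres_der_FIP}; you have just spelled out the details. One tiny inaccuracy: the remark that the complex analogues of Theorem \ref{band_preser_der} and Corollary \ref{band_pres_der_FIP} remain valid appears just \emph{before} Theorem \ref{version-Kus_complex}, not after it.
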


Note, that for a solid space $X\subset C(Q)$ with $s(X)\neq\mathbf{1}$ there may exist non-zero derivations $\delta\colon X\to L^0(\mathcal{B})$ even in case when the Boolean algebra $s(X)\mathcal{B}$ is $\sigma$-distributive (see Example 5.2 below). Such derivations $\delta$ are not band preserving, i.e. the inequality  $s(\delta)\leqslant s(X)$ does not hold (see Theorem \ref{version-Kus}).

\section{Main results}\label{sec_main}
In this section the necessary and sufficient conditions for existence of non-zero band preserving derivations from an ideal $X\subset C(Q)$ into a solid space $Y\subset L^0(\mathcal{B})$ are given.

Let $e$ be a non-zero element from a Boolean algebra $\mathcal{B}$. A solid space $Y$ in $L^0(\mathcal{B})$ is said to be \textit{$e$-extended}, if $eY=eL^0(\mathcal{B})$. In this case, according to Proposition \ref{pr6} (i), $eL^0(\mathcal{B})\subset Y$, in particular, $e\in Y$. If $Y$ is a solid space in  $L^0(\mathcal{B}), 0\neq e\in Y\cap\mathcal{B}$ and $\dim(eY)<\infty$, then, by the inclusion $eC(Q)\subset eY$,we have that $\dim C(Q_e)<\infty$, where $Q_e$ is the Stone space of the Boolean algebra $\mathcal{B}_e$. Consequently, the idempotent $e$ is a finite sum of atoms $\{p_i\}_{i=1}^n$, and therefore  $eY=\sum\limits_{i=1}^n p_iY=\sum\limits_{i=1}^n\mathbb{R}p_i=eL^0(\mathcal{B})$, i.e. $Y$ is $e$-extended. Note also, that in case when $\mathcal{B}$ is continuous, a solid space $Y=C(Q)$ is not $e$-extended for any non-zero $e\in\mathcal{B}$.

\begin{theorem}\label{nes_suf_con}
Let $\mathcal{B}$ be a complete Boolean algebra, let $Q$ be the Stone space of  $\mathcal{B}$, let $X$ be an ideal in $C(Q)$ and let $Y$ be a solid space in $L^0(\mathcal{B})$. The following conditions are equivalent:

(i). There exists a non-zero band preserving derivation from $X$ into $Y$;

(ii). There exists a non-zero idempotent $e\leqslant s(X)s(Y)$, such that $Y$ is  $e$-extended and the Boolean algebra $\mathcal{B}_e$ is not $\sigma$-distributive.
\end{theorem}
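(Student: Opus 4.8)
The plan is to prove the equivalence in two directions, the implication $(i)\Rightarrow(ii)$ being the substantial one. The recurring device in both directions is to transfer the problem to a principal component $eL^0(\mathcal{B})\cong L^0(\mathcal{B}_e)$ via the isomorphism $\Phi$ of Proposition~\ref{eY}, where the dichotomies already established for derivations defined on a \emph{whole} algebra $C(Q_e)$ (Theorems~\ref{ad_domain} and~\ref{version-Kus}) become applicable.

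For $(ii)\Rightarrow(i)$, I would start from a non-zero $e\leqslant s(X)s(Y)$ with $Y$ $e$-extended and $\mathcal{B}_e$ not $\sigma$-distributive. Since $e\leqslant s(X)$, the solid space $eX$ has support $s(eX)=e$, so by Proposition~\ref{eY} the image $\Phi(eX)$ is an order-dense ideal in $C(Q_e)$. As $(\mathcal{B}_e)_{s(\Phi(eX))}=\mathcal{B}_e$ is not $\sigma$-distributive, Theorem~\ref{version-Kus} supplies a non-zero band preserving derivation $\delta_0\colon\Phi(eX)\to L^0(\mathcal{B}_e)$. Pulling it back by $\Phi$ and precomposing with multiplication by $e$, I set $\delta(x)=\Phi^{-1}\bigl(\delta_0(\Phi(ex))\bigr)$ for $x\in X$; a direct check using $\Phi(exy)=\Phi(ex)\Phi(ey)$ and $\delta(x)=e\,\delta(x)$ shows $\delta$ is a derivation on $X$. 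It is band preserving because $s(\delta(x))\leqslant s(ex)\leqslant s(x)$, it is non-zero because $\delta_0$ is, and its values lie in $eL^0(\mathcal{B})$, which equals $eY\subset Y$ precisely because $Y$ is $e$-extended. This yields the required derivation into $Y$.

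For $(i)\Rightarrow(ii)$, let $\delta\colon X\to Y$ be a non-zero band preserving derivation and put $e_0=s(\delta)$, so $0\neq e_0\leqslant s(X)s(Y)$. The key step is to replace the ideal $X$ by a full algebra: by Proposition~\ref{fund}(i) choose $0\neq p\in X\cap\mathcal{B}$ with $p\leqslant e_0$; then $pC(Q)\subset X$ (as $p\in X$ and $X$ is an ideal), and since $p\leqslant e_0=\sup\{s(\delta(x))\}$ one cannot have $p\,s(\delta(x))=0$ for every $x$, so $\delta|_{pC(Q)}$ is non-zero. Band preservation gives $\delta(pC(Q))\subset pY$, hence after applying $\Phi$ I obtain a non-zero derivation $\widetilde\delta\colon C(Q_p)\to Y_p$, where $Y_p=\Phi(pY)$ is a solid space in $L^0(\mathcal{B}_p)$. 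Now Theorem~\ref{ad_domain} applies and yields $s(\widetilde\delta)\,Y_p=s(\widetilde\delta)\,L^0(\mathcal{B}_p)$; setting $e=s(\widetilde\delta)$ (a non-zero idempotent $\leqslant p$) and transporting back by $\Phi^{-1}$ gives $eY=eL^0(\mathcal{B})$, i.e. $Y$ is $e$-extended, with $e\leqslant p\leqslant e_0\leqslant s(X)s(Y)$.

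It remains to see that $\mathcal{B}_e$ is not $\sigma$-distributive. Since $e=s(\widetilde\delta)$, Proposition~\ref{der} gives $\widetilde\delta(x)=\widetilde\delta(ex)$, so $\widetilde\delta$ restricts to a non-zero derivation $C(Q_e)\to L^0(\mathcal{B}_e)$ (automatically band preserving) whose support is the full unit $e$; by Theorem~\ref{version-Kus}, applied to the ideal $C(Q_e)$ with $s(C(Q_e))=e$, the algebra $(\mathcal{B}_e)_{e}=\mathcal{B}_e$ cannot be $\sigma$-distributive, completing $(i)\Rightarrow(ii)$. I expect the main obstacle to be exactly this localization in $(i)\Rightarrow(ii)$: Theorem~\ref{ad_domain} is available only when the domain is the \emph{entire} algebra $C(Q_p)$, so one genuinely needs a projection $p\in X\cap\mathcal{B}$ below $s(\delta)$ to turn the proper ideal $X$ into a full algebra without destroying the nontriviality of $\delta$; the careful bookkeeping is in checking $\delta(pC(Q))\subset pY$ and that solidity of $Y_p$ survives the passage through $\Phi$.
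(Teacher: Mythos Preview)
Your proposal is correct and follows essentially the same route as the paper: in both directions you localize via $\Phi$ to a principal component $L^0(\mathcal{B}_e)$, and in $(i)\Rightarrow(ii)$ you pick $p\in X\cap\mathcal{B}$ with $p\leqslant s(\delta)$ via Proposition~\ref{fund}(i), pass to $C(Q_p)$, and invoke Theorems~\ref{ad_domain} and~\ref{version-Kus}. The only difference is a small economy: the paper observes directly that for the chosen projection $e\leqslant s(\delta)$ one has $s(\delta_e)=\sup\{s(e\delta(x)):x\in X\}=e\cdot s(\delta)=e$, so the very projection furnished by Proposition~\ref{fund}(i) already works and no further restriction to a smaller support is needed; your extra step of passing from $p$ to $e=s(\widetilde\delta)$ is harmless but unnecessary.
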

\begin{proof}
$(i)\Rightarrow(ii)$. Let $\delta\colon X\to Y$ be a non-zero band preserving derivation. By Theorem \ref{band_preser_der} we have that $0\neq s(\delta)\leqslant s(X)s(Y)$. According to Proposition \ref{fund} (i), there exists a non-zero $e\in X\cap\mathcal{B}$, such that $e\leqslant s(\delta)$. Since $eX=eC(Q)$, it follows that the restriction  $\delta_e$ of the derivation  $\delta$ on $eX$ is a derivation from $eC(Q)$ into $eY\subset eL^0(\mathcal{B})$, in addition,
\begin{gather*}
\begin{split}
s(\delta_e)&=\sup\{\delta(ex):x\in X\}=\sup\{e\delta(x):x\in X\}=\\
&=e\sup\{\delta(x):x\in X\}=es(\delta)=e.
\end{split}
\end{gather*}

Let $\Phi$ be the algebraic and lattice isomorphism from $eL^0(\mathcal{B})$ onto $L^0(\mathcal{B}_e),\overline{\delta}(a)=\Phi\bigl(\delta_e(\Phi^{-1}(a))\bigl), a\in \Phi(e C(Q))=C(Q_e)$, where $Q_e$ is the Stone space of the Boolean algebra  $\mathcal{B}_e$. It is clear that $\overline{\delta}$ is a non-zero derivation from $C(Q_e)$ into $L^0(\mathcal{B}_e), s(\overline{\delta})=e$ and $\overline{\delta}(C(Q_e))\subset\Phi(eY)$. Since $\Phi(eY)$ is a solid space in  $L^0(\mathcal{B}_e)$, by Theorem \ref{ad_domain} we have that $\Phi(eY)=L^0(\mathcal{B}_e)$. Hence, $eY=eL^0(\mathcal{B})$, i.e. $Y$ is a $e$-extended solid space in $L^0(\mathcal{B})$.

Moreover, by Theorem \ref{version-Kus}, the Boolean algebra $\mathcal{B}_e$ is not $\sigma$-distributive.

$(ii)\Rightarrow(i)$. According to Theorem \ref{version-Kus}, there exists a non-zero band preserving derivation $\delta_1$ from $\Phi(eX)$ with values in  $L^0(\mathcal{B}_e)=\Phi(eL^0(\mathcal{B}))=\Phi(eY)$, where $\Phi$ is the algebraic and lattice isomorphism from $eL^0(\mathcal{B})$ onto $L^0(\mathcal{B}_e)$. Define the linear mapping $\delta\colon X\to L^0(\mathcal{B})$ by setting $\delta(x)=\Phi^{-1}\bigl(\delta_1(\Phi(ex))\bigl), \\x\in X$. It is clear that $\delta$ is a non-zero derivation from $X$ into \\$\Phi^{-1}(L^0(\mathcal{B}_e))=eY\subset Y$ (the last inclusion follows from Proposition \ref{pr6} (i)). Since $s(\delta)\leqslant e\leqslant s(X)$, we have that $\delta$ is a band preserving derivation (see Theorem \ref{band_preser_der}).
\end{proof}

Now, let us show that any band preserving derivation from an ideal \\$X\subset C(Q)$ with values in a quasi-normed solid space $Y\subset L^0(\mathcal{B})$ is always trivial.

\begin{claim}\label{KIP_e-extended}
If $(Y,\|\cdot\|_Y)$ is a quasi-normed solid space in $L^0(\mathcal{B})$, \\$0\neq e\in\mathcal{B}$ and the Boolean algebra $\mathcal{B}_e$ is not finite, then $Y$ is not an $e$-extended solid space in $L^0(\mathcal{B})$.
\end{claim}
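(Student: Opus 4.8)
The plan is to argue by contradiction: assuming $Y$ is $e$-extended, I would manufacture a single positive element of $eL^0(\mathcal{B})$ whose membership in $Y$ is incompatible with the monotone quasi-norm. The whole argument rests on extracting an infinite disjoint family from the infinitude of $\mathcal{B}_e$, combined only with the homogeneity and monotonicity of $\|\cdot\|_Y$.

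First I would record the elementary Boolean-algebraic fact that, since $\mathcal{B}_e$ is not finite, there is a sequence $\{p_n\}_{n=1}^\infty$ of pairwise disjoint non-zero idempotents with $p_n\leqslant e$. This is obtained by a routine induction: starting from $e$, split off a non-zero piece $p_1$ while retaining a residual $r_1<e$ with $\mathcal{B}_{r_1}$ still infinite, using that a relative algebra decomposes as $\mathcal{B}_r\cong\mathcal{B}_a\times\mathcal{B}_{r-a}$ and that a product of two finite algebras is finite (so at least one factor stays infinite); iterating yields the disjoint family.

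Next, suppose for contradiction that $Y$ is $e$-extended, i.e. $eY=eL^0(\mathcal{B})$. As noted just before the statement, Proposition \ref{pr6}(i) then gives $eL^0(\mathcal{B})\subset Y$; in particular each $p_n=ep_n\in Y$, and the first quasi-norm axiom forces $c_n:=\|p_n\|_Y>0$. I would then set $g=\sup_{n}(n/c_n)\,p_n$. Since the summands are bounded and mutually disjoint, $g$ is a well-defined positive element of $L^0(\mathcal{B})$ (the corresponding mixing in the universally complete lattice $L^0(\mathcal{B})$), and $eg=g$, so $g\in eL^0(\mathcal{B})\subset Y$.

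Finally, the contradiction comes from the quasi-norm alone. For every $n$ one has $0\leqslant (n/c_n)p_n\leqslant g$, so monotonicity yields $\|(n/c_n)p_n\|_Y\leqslant\|g\|_Y$, while homogeneity gives $\|(n/c_n)p_n\|_Y=(n/c_n)\|p_n\|_Y=n$. Hence $n\leqslant\|g\|_Y$ for all $n\in\mathbb{N}$, which is absurd, and therefore $Y$ cannot be $e$-extended. The only real (and modest) obstacle is the opening step, namely extracting an \emph{infinite disjoint} family from the bare infinitude of $\mathcal{B}_e$; once that family is in hand the remainder is a direct computation that uses neither completeness of $Y$ nor the quasi-triangle constant $C$, so the same argument covers honest norms as a special case.
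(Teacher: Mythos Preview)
Your argument is correct and follows essentially the same route as the paper's proof: assume $e$-extendedness, extract an infinite disjoint family of non-zero idempotents below $e$, scale each by a coefficient chosen to make its quasi-norm exceed $n$, form the mixing (equivalently, the disjoint supremum) in the universally complete lattice $L^0(\mathcal{B})$, and derive a contradiction from monotonicity and homogeneity of $\|\cdot\|_Y$. The only cosmetic differences are that the paper takes a countable \emph{partition} of $e$ rather than merely a disjoint family, and uses integer scalars $m_n$ with $m_n\|e_n\|_Y>n$ in place of your $n/c_n$; neither change is material.
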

\begin{proof}
Suppose that the quasi-normed solid space $Y$ is an $e$-extended solid space in  $L_0(\mathcal{B})$, i.e. $eY=eL^0(\mathcal{B})$. Since the Boolean algebra $\mathcal{B}_e$ is not finite, there exists a countable partition $\{e_n\}_{n=1}^\infty$ of the idempotent $e$, such that $e_n\neq 0$ for all $n\in\mathbb{N}$. For every $n\in\mathbb{N}$ select $m_n\in\mathbb{N}$ such that $m_n\|e_n\|_Y>n$. After that, select $y\in eL^0(\mathcal{B})=eY\subset Y$, such that $e_ny=m_ne_n$. We have
$$ n<m_n\|e_n\|_Y=\|m_ne_n\|_Y=\|e_ny\|_Y\leqslant\|y\|_Y$$
for all $n\in\mathbb{N}$, that is impossible.

Consequently, $Y$ is not an $e$-extended solid space in $L^0(\mathcal{B})$.
\end{proof}

\begin{theorem} \label{main}Let $\mathcal{B}$ be a complete Boolean algebra, let $Q$ be the Stone space of $\mathcal{B}$, let $X$ be an ideal in $C(Q)$ and let $Y$ be a quasi-normed solid space in  $L^0(\mathcal{B})$. Then any band preserving derivation $\delta\colon  X\to Y$ is trivial.
\end{theorem}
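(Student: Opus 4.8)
The plan is to argue by contradiction, combining the existence criterion of Theorem \ref{nes_suf_con} with the structural obstruction of Proposition \ref{KIP_e-extended}. Suppose, contrary to the claim, that $\delta\colon X\to Y$ is a non-zero band preserving derivation. Then condition (i) of Theorem \ref{nes_suf_con} is satisfied, so by the equivalence established there, condition (ii) holds as well: there is a non-zero idempotent $e\leqslant s(X)s(Y)$ such that $Y$ is $e$-extended and the Boolean algebra $\mathcal{B}_e$ is not $\sigma$-distributive.

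The key observation I would make next is that a non-$\sigma$-distributive Boolean algebra cannot be finite. Indeed, every finite Boolean algebra is complete and atomic, and, as recorded in the Preliminaries, each complete atomic Boolean algebra is $\sigma$-distributive. Taking the contrapositive, since $\mathcal{B}_e$ fails to be $\sigma$-distributive it cannot be atomic-and-finite; hence $\mathcal{B}_e$ is not finite.

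Now $Y$ is by hypothesis a quasi-normed solid space and $\mathcal{B}_e$ is not finite, so Proposition \ref{KIP_e-extended} applies and yields that $Y$ is \emph{not} an $e$-extended solid space in $L^0(\mathcal{B})$. This directly contradicts the property of $e$ furnished by condition (ii) of Theorem \ref{nes_suf_con}. The contradiction shows that no non-zero band preserving derivation $\delta\colon X\to Y$ can exist, i.e. every such derivation is trivial.

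The main point I want to stress is that essentially all the real work has already been carried out in Theorem \ref{nes_suf_con} and Proposition \ref{KIP_e-extended}; the present argument is merely their juxtaposition. The only genuinely non-automatic step — and thus the one I would double-check carefully — is the passage from ``not $\sigma$-distributive'' to ``not finite,'' which rests on the fact that finiteness forces atomicity and hence $\sigma$-distributivity. No quasi-norm estimates need be redone here, since they are already packaged inside Proposition \ref{KIP_e-extended}.
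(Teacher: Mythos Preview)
Your proof is correct and follows essentially the same approach as the paper: argue by contradiction, invoke Theorem \ref{nes_suf_con} to obtain a non-zero $e$ with $Y$ $e$-extended and $\mathcal{B}_e$ not $\sigma$-distributive, note that the latter forces $\mathcal{B}_e$ to be infinite, and then apply Proposition \ref{KIP_e-extended} to reach a contradiction. Your explicit justification of the step ``not $\sigma$-distributive $\Rightarrow$ not finite'' via atomicity is exactly the reasoning the paper has in mind when it writes ``Hence, the Boolean algebra $\mathcal{B}_e$ is not finite.''
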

\begin{proof}
Suppose that there exists a non-zero band preserving derivation $\delta\colon X\to Y$. According to Theorem \ref{nes_suf_con}, there exists a non-zero idempotent $e\leqslant s(X)s(Y)$, such that $Y$ is an $e$-extended solid space in  $L^0(\mathcal{B})$ and the Boolean algebra $\mathcal{B}_e$ is not $\sigma$-distributive. Hence, the Boolean algebra $\mathcal{B}_e$ is not finite and, by Proposition \ref{KIP_e-extended}, $Y$ is not an $e$-extended solid space in $L^0(\mathcal{B})$. From the obtained contradiction it follows that any band preserving derivation $\delta\colon X\to Y$ is trivial.
\end{proof}

Let us give one useful corollary of Theorem  \ref{nes_suf_con}.
\begin{corollary}\label{der_ideals}
Let $\mathcal{B}$ be an arbitrary complete Boolean algebra, let $Q$ be the Stone space of $\mathcal{B}$ and let $X,Y$ be ideals in $C(Q)$. Then any band preserving derivation $\delta$ from $X$ into $Y$ is trivial.
\end{corollary}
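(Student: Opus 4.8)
The plan is to reduce the statement to Theorem \ref{main} by observing that an ideal $Y$ in $C(Q)$ is automatically a quasi-normed solid space. First I would recall that, by Proposition \ref{pr6}(ii), every ideal $Y$ in $C(Q)$ is a solid space in $L^0(\mathcal{B})$; since moreover $Y\subset C(Q)$, each $y\in Y$ is a continuous real-valued function on the compact space $Q$ and hence bounded, so the uniform norm $\|y\|_\infty=\sup_{t\in Q}|y(t)|$ is finite on $Y$. This norm is clearly monotone, because $|y|\leqslant|x|$ forces $\sup_{t\in Q}|y(t)|\leqslant\sup_{t\in Q}|x(t)|$. Thus $(Y,\|\cdot\|_\infty)$ is a quasi-normed solid space in $L^0(\mathcal{B})$, and Theorem \ref{main} applies verbatim: any band preserving derivation $\delta\colon X\to Y$ is trivial.

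Alternatively, I would argue directly through Theorem \ref{nes_suf_con}, which isolates the real content. Suppose, towards a contradiction, that a non-zero band preserving derivation $\delta\colon X\to Y$ exists. Then there is a non-zero idempotent $e\leqslant s(X)s(Y)$ such that $Y$ is $e$-extended, i.e. $eY=eL^0(\mathcal{B})$, and $\mathcal{B}_e$ is not $\sigma$-distributive. Since $Y\subset C(Q)$ we have $eY\subset eC(Q)$, whence $eL^0(\mathcal{B})=eY\subset eC(Q)\subset eL^0(\mathcal{B})$, giving $eC(Q)=eL^0(\mathcal{B})$, that is $C(Q_e)=L^0(\mathcal{B}_e)$ under the isomorphism $\Phi$ of Section \ref{sec_prel}.

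The only point needing a short argument, and the mild obstacle, is that $C(Q_e)=L^0(\mathcal{B}_e)$ forces $\mathcal{B}_e$ to be finite. If $\mathcal{B}_e$ were infinite, it would contain a countable disjoint family $\{e_n\}_{n=1}^\infty$ of non-zero idempotents below $e$, and then $x=\sup_{n}n e_n\in L^0(\mathcal{B}_e)$ would be an unbounded element not lying in $C(Q_e)$, contradicting $C(Q_e)=L^0(\mathcal{B}_e)$. Hence $\mathcal{B}_e$ is finite, therefore atomic, and so $\sigma$-distributive; this contradicts the non-$\sigma$-distributivity of $\mathcal{B}_e$ supplied by Theorem \ref{nes_suf_con}. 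Either route shows that every band preserving derivation $\delta\colon X\to Y$ is trivial, the entire content being carried by Theorems \ref{nes_suf_con} and \ref{main}.
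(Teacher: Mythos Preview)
Your proposal is correct, and your second route is essentially the paper's own proof: the paper argues by contradiction via Theorem \ref{nes_suf_con}, obtains $eL^0(\mathcal{B})=eY\subset eC(Q)\subset eL^0(\mathcal{B})$, and concludes that $\mathcal{B}_e$ is finite (hence $\sigma$-distributive), contradicting Theorem \ref{nes_suf_con}. Your first route, equipping $Y\subset C(Q)$ with the uniform norm and invoking Theorem \ref{main}, is a valid shortcut that the paper does not take explicitly, but it amounts to the same thing since Theorem \ref{main} itself rests on Theorem \ref{nes_suf_con} and Proposition \ref{KIP_e-extended}.
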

\begin{proof}Suppose that there exists a non-zero band preserving derivation $\delta\colon X\to Y$. By Theorem \ref{nes_suf_con} there exists a non-zero idempotent $e\leqslant s(X)s(Y)$, such that $eL^0(\mathcal{B})=eY\subset eC(Q)\subset eL^0(\mathcal{B})$ and the Boolean algebra $e\mathcal{B}$ is not  $\sigma$-distributive. Consequently, $eC(Q)=eL^0(\mathcal{B})$, that implies the finiteness of the Boolean algebra $e\mathcal{B}$, in particular, the Boolean algebra $e\mathcal{B}$ is $\sigma$-distributive, that is not true.
\end{proof}

\textbf{Example 5.1.}
Firstly, let us give an example of order-dense solid space $Y$ in $L^0(\mathcal{B})$, which does not coincide with $L^0(\mathcal{B})$, such that there exist a non-zero band preserving derivation from $C(Q)$ into $Y$.

Let $\mathcal{B}$ be a properly non $\sigma$-distributive Boolean algebra, let $Q$ be the Stone space of $\mathcal{B}$. Select an infinite partition  $\{e_i\}_{i\in I}$ of unity of the Boolean algebra $\mathcal{B}$ with $e_i\neq 0, i\in I$,  and denote by $Y$ the set of all $y\in L^0(\mathcal{B})$, such that $e_iy=0$ for all $i\in I$, possibly, except a finite subset $I(y)\subset I$.

It is clear that $Y$ is a order-dense solid space in $L^0(\mathcal{B})$, in addition, $\mathbf{1}
\notin Y$, i.e. $Y\neq L^0(\mathcal{B})$. Moreover, $e_iY=e_iL^0(\mathcal{B})$, i.e. $Y$ is an $e_i$-extended order-dense solid space for all $i\in I$. Since the Boolean algebra $\mathcal{B}$ is properly non $\sigma$-distributive, it follows that the Boolean algebra $e_i\mathcal{B}$ is not $\sigma$-distributive for all $i\in I$. From Theorem \ref{nes_suf_con} and the equalities $s(Y)=\mathbf{1}=s(C(Q))$, it follows that there exists a non-zero band preserving derivation from $C(Q)$ into $Y$. $\Box$

\textbf{Example 5.2.}
Now, give an example of a quasi-normed solid space \\$Y\subset C(Q)$ and an ideal $X\subset C(Q)$ with $s(X)\neq\mathbf{1}$, such that there exists a non-zero derivation $\delta\colon X\to Y$ with the property $s(\delta)s(X)=0$.

Let $\mathcal{B}=2^\Delta$ be a complete atomic Boolean algebra with the countable set of atoms $\Delta=\{q_n\}_{n=1}^\infty$, let $Q$ be the Stone of  $\mathcal{B}$.  In this case, the algebra $C(Q)$ is identified with the algebra $l_\infty$ of all bounded sequences of real numbers and the algebra $L^0(\mathcal{B})$ coincides with the algebra $l_0$ of all sequences from  $\mathbb{R}$.

Select an element $x_0=\{\alpha_n\}_{n=1}^\infty\in l_\infty$, such that $\alpha_n=\frac{1}{k}$ for $n=2k$ and $\alpha_k=0$ for $n=2k-1, k\in\mathbb{N}$. Consider the principal ideal $X=x_0l_\infty$ in $l_\infty$, such that $s(X)=e=\sup\limits_{k\geq 1} q_{2k}$. If $e\in X$, then $e=x_0y$ for some  $y=\{y_n\}\in l_\infty$, in particular, $1=\frac{1}{k}y_{2k}, k\in\mathbb{N}$, i.e. $y_{2k}=k\rightarrow\infty$, that contradicts to the inclusion  $y\in l_\infty$. Consequently, $e\notin X$. 

It is clear that $X^2=\{\sum\limits_{i=1}^n a_ib_i:a_i,b_i\in X,i=\overline{1,n}, n\in\mathbb{N}\}$ is an ideal in $l_\infty$ and $X^2\subset X$. If $x_0\in X^2$, then $x_0=x_0^2\sum\limits_{i=1}^nu_iv_i$ for some $u_i,v_i\in l_\infty, i=\overline{1,n},\\ n\in\mathbb{N}$, and therefore $e=x_0\sum\limits_{i=1}^n u_iv_i\in X$, that is not true. Hence, $\mathbb{R}x_0\cap X^2=\{0\}$. Using the Zorn lemma, construct the Hamel basis $\mathcal{E}$ in  $X$, containing $x_0$ and the Hamel basis in $X^2$. Then $X=\mathbb{R}x_0\varoplus E$, where  $E$ is a linear subspace in $X$, generated by the set $\mathcal{E}\setminus\{x_0\}$, in particular, $X^2\subset E$.

Consider a solid space $Y=(\mathbf{1}-e)l_\infty$ in $l_\infty$ with the monotone Banach norm $\|y\|_Y=\|y\|_\infty, y\in Y$. Define a linear mapping  $\delta\colon X\to Y$ by setting $\delta(\alpha x_0+z)=\alpha(\mathbf{1}-e)$ for all $\alpha x_0+z\in\mathbb{R}x_0\varoplus E=X, \alpha\in\mathbb{R},z\in E$. If $a=\alpha x_0+y, b=\beta x_0+z\in X, \alpha,\beta\in\mathbb{R}, y,z\in E$, then $ab\in X^2\subset E,\\ y,z\in X=eX$, and therefore $\delta(ab)=0, \delta(a)b=\alpha (\mathbf{1}-e)eb=0,\\ a\delta(b)=ae\beta(\mathbf{1}-e)=0$. Consequently, $\delta(ab)=\delta(a)b+a\delta(b)$, i.e. $\delta$ is a non-zero derivation from $X$ into the Banach solid space $Y\subset l_\infty$, for which $s(\delta)s(X)=0$. By Theorem \ref{band_preser_der} the constructed derivation  $\delta\colon X\to Y$ is not band preserving.

Now, consider the complex case. Let $\mathbb{X}$ be an ideal in the algebra $\mathbb{C}(Q)$ and let $(\mathbb{Y},\|\cdot\|_\mathbb{Y})$ be a solid space in  $L^0_\mathbb{C}(\mathcal{B})$. Since any derivation  $\delta\colon\mathbb{X}\to\mathbb{Y}$ may be represented as  $\delta=\delta_\mathbbm{Re}+i\delta_\mathbbm{Im}$, where $\delta_\mathbbm{Re},\delta_\mathbbm{Im}$ are $*$-derivations from the solid space $\mathbb{X}$ into $\mathbb{Y}$, Theorems \ref{nes_suf_con}, \ref{main} and Corollary \ref{der_ideals} imply the following

\begin{corollary}\label{main-cor}Let $\mathcal{B}$ be an arbitrary complete Boolean algebra, let $Q$ be the Stone space of  $\mathcal{B}$, let $\mathbb{X}$ be an ideal in $\mathbb{C}(Q)$, and let $\mathbb{Y}$ be a solid space in $L^0_\mathbb{C}(\mathcal{B})$. Then

(i). There exists a non-zero band preserving derivation $\delta\colon\mathbb{X}\to\mathbb{Y}$ if and only if there exists a non-zero idempotent $e\leqslant s(\mathbb{X})s(\mathbb{Y})$, such that $e\mathbb{Y}=eL^0_\mathbb{C}(\mathcal{B})$ and the Boolean algebra  $\mathcal{B}_e$ is not $\sigma$-distributive;

(ii). If $(\mathbb{Y},\|\cdot\|_Y)$ is a quasi-normed solid space, or $\mathbb{Y}\subset\mathbb{C}(Q)$, then any band preserving derivation  $\delta\colon \mathbb{X}\to \mathbb{Y}$ is trivial.
\end{corollary}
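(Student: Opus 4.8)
The plan is to reduce everything to the real case already settled in Theorems \ref{nes_suf_con}, \ref{main} and Corollary \ref{der_ideals}, exploiting the decomposition $\delta=\delta_\mathbbm{Re}+i\delta_\mathbbm{Im}$ into $*$-derivations recorded just above the statement. Write $X=\mathbb{X}_h$ and $Y=\mathbb{Y}_h$, so that $X$ is an ideal in $C(Q)$, $Y$ is a solid space in $L^0(\mathcal{B})$, $\mathbb{X}=X\varoplus iX$, $\mathbb{Y}=Y\varoplus iY$, and $s(\mathbb{X})=s(X)$, $s(\mathbb{Y})=s(Y)$ by the remarks in Section \ref{sec_prel}. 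The first thing I would record is the dictionary between a $*$-derivation $\varrho\colon\mathbb{X}\to\mathbb{Y}$ and its restriction $\varrho|_X\colon X\to Y$: since $\varrho$ is determined on $\mathbb{X}=X\varoplus iX$ by $\varrho(x+iy)=\varrho(x)+i\varrho(y)$, the map $\varrho$ is non-zero (respectively band preserving, respectively trivial) if and only if $\varrho|_X$ has the same property. Likewise, because $e\mathbb{Y}=eY\varoplus ieY$ and $eL^0_\mathbb{C}(\mathcal{B})=eL^0(\mathcal{B})\varoplus ieL^0(\mathcal{B})$, the condition $e\mathbb{Y}=eL^0_\mathbb{C}(\mathcal{B})$ is equivalent to $Y$ being $e$-extended, i.e. $eY=eL^0(\mathcal{B})$.

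For part (i), in the forward direction I would take a non-zero band preserving derivation $\delta\colon\mathbb{X}\to\mathbb{Y}$, split it as $\delta=\delta_\mathbbm{Re}+i\delta_\mathbbm{Im}$, and note that at least one summand, say $\delta_\mathbbm{Re}$, is non-zero and band preserving; by the dictionary its restriction $\delta_\mathbbm{Re}|_X\colon X\to Y$ is then a non-zero band preserving real derivation. Applying Theorem \ref{nes_suf_con} produces a non-zero idempotent $e\leqslant s(X)s(Y)=s(\mathbb{X})s(\mathbb{Y})$ with $eY=eL^0(\mathcal{B})$ and $\mathcal{B}_e$ not $\sigma$-distributive, and translating $eY=eL^0(\mathcal{B})$ back gives $e\mathbb{Y}=eL^0_\mathbb{C}(\mathcal{B})$. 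Conversely, given such an $e$, the same equivalence yields $eY=eL^0(\mathcal{B})$, so Theorem \ref{nes_suf_con} supplies a non-zero band preserving real derivation $\delta_0\colon X\to Y$; I would then set $\delta(x+iy)=\delta_0(x)+i\delta_0(y)$ and check that $\delta$ is $\mathbb{C}$-linear, satisfies the Leibniz rule, is non-zero and band preserving (it is precisely the $*$-derivation with $\delta_\mathbbm{Re}=\delta_0$ and $\delta_\mathbbm{Im}=0$). This verification is routine bilinearity bookkeeping.

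For part (ii), I would again split an arbitrary band preserving $\delta$ as $\delta_\mathbbm{Re}+i\delta_\mathbbm{Im}$ and show that both restrictions $\delta_\mathbbm{Re}|_X,\delta_\mathbbm{Im}|_X\colon X\to Y$ vanish, whence $\delta=0$ by the dictionary. If $\mathbb{Y}$ carries a monotone quasi-norm, then $Y=\mathbb{Y}_h$ with the induced quasi-norm is a quasi-normed solid space in $L^0(\mathcal{B})$ (Section \ref{sec_prel}), so Theorem \ref{main} forces each restriction to be trivial. If instead $\mathbb{Y}\subset\mathbb{C}(Q)$, then $Y=\mathbb{Y}_h\subset C(Q)$ is an ideal in $C(Q)$ by Proposition \ref{pr6}(ii), and Corollary \ref{der_ideals} forces triviality.

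I expect no genuinely hard obstacle here: the content lies entirely in the two bookkeeping equivalences above — that a $*$-derivation and its self-adjoint restriction share the properties ``non-zero / band preserving / trivial'', and that $e$-extendedness of $\mathbb{Y}$ matches $e$-extendedness of $Y=\mathbb{Y}_h$. The only point requiring slight care is confirming that the complexification built in the converse of (i) is genuinely a complex derivation, and that the self-adjoint part $\mathbb{Y}_h$ inherits the quasi-norm and solidity structure needed to invoke the real theorems, both of which are guaranteed by the correspondences established in Section \ref{sec_prel}.
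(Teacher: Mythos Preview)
Your proposal is correct and follows exactly the approach the paper takes: the paper's entire argument is the single remark that since any derivation $\delta\colon\mathbb{X}\to\mathbb{Y}$ decomposes as $\delta=\delta_\mathbbm{Re}+i\delta_\mathbbm{Im}$ with $*$-derivation summands, Theorems \ref{nes_suf_con}, \ref{main} and Corollary \ref{der_ideals} immediately yield the corollary. You have simply spelled out the bookkeeping (the dictionary between $*$-derivations and their self-adjoint restrictions, and the equivalence of $e$-extendedness for $\mathbb{Y}$ and $\mathbb{Y}_h$) that the paper leaves implicit.
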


Now, give a version of Corollary  \ref{main-cor} for commutative  $AW^*$-algebras. An $AW^*$-algebra is a $C^*$-algebra which is simultaneously a Baer $*$-algebra (\cite{book-Kusraev}, 7.5.2). If $\mathcal{A}$ is a commutative  $AW^*$-algebra, then the lattice \\$P(\mathcal{A})=\{p\in\mathcal{A}:p=p^*=p^2\}$ of all projections from $\mathcal{A}$ is a complete Boolean algebra \cite{Chilin}, in particular, if $\mathcal{A}$ is a commutative von Neumann algebra, then the Boolean algebra $P(\mathcal{A})$ is multinormed. A commutative $AW^*$-algebra $\mathcal{A}$ is $*$-isomorphic to the $C^*$-algebra $\mathbb{C}(Q(P(\mathcal{A})))$, where $Q(P(\mathcal{A}))$ is the Stone space of the Boolean algebra $P(\mathcal{A})$ (\cite{book-Kusraev}, 7.4.3, 7.5.2). Denote by $S(\mathcal{A})$ the $*$-algebra of all measurable operators affiliated with $AW^*$-algebra $\mathcal{A}$ (see e.g. \cite{Berberian}, \cite{Chilin}). It is known \cite{Berberian}, that for a commutative $AW^*$-algebra $\mathcal{A}$ the  $*$-algebra $S(\mathcal{A})$ is $*$-isomorphic to the $*$-algebra $L^0_\mathbb{C}(P(\mathcal{A}))$. Therefore, Theorem \ref{version-Kus}, Corollaries \ref{cor_atomic-distrib} and \ref{main-cor} imply the following

\begin{theorem} Let $\mathcal{A}$ be a commutative $AW^*$-algebra (respectively,  commutative von Neumann algebra), let $\mathcal{I}$ be an ideal in  $\mathcal{A}$, and let $\mathbb{Y}$ be a solid space $S(\mathcal{A})$. Then

(i). The Boolean algebra $P(s(\mathcal{I})\mathcal{A})$ of all projections from $s(\mathcal{I})\mathcal{A}$ is $\sigma$-distributive (respectively, atomic) if and only if there no non-zero band preserving derivations from $\mathcal{I}$ into $S(\mathcal{A})$;

(ii). There exist a non-zero band preserving derivation  $\delta\colon\mathcal{I}\to\mathbb{Y}$ if and only if there exist a non-zero projection $p\leqslant s(\mathcal{I})s(\mathbb{Y})$, such that $p\mathbb{Y}=pS(\mathcal{A})$ and the Boolean algebra $P(p\mathcal{A})$ is not $\sigma$-distributive;

(iii). If $(\mathbb{Y},\|\cdot\|_\mathbb{Y})$ is a quasi-normed solid space in $S(\mathcal{A})$, or $\mathbb{Y}\subset\mathcal{A}$, then any band preserving derivation $\delta$ from $\mathcal{I}$ into $\mathbb{Y}$ is trivial.
\end{theorem}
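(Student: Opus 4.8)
The plan is to reduce the statement to the already-established Boolean-algebraic results by transporting the problem along the canonical representation of $\mathcal{A}$ and $S(\mathcal{A})$. First I would set $\mathcal{B}=P(\mathcal{A})$, which is a complete Boolean algebra by \cite{Chilin}, and let $Q$ be its Stone space. By the cited structure theorems there is a $*$-isomorphism $\Psi$ of $S(\mathcal{A})$ onto $L^0_\mathbb{C}(\mathcal{B})$ which carries $\mathcal{A}$ onto $\mathbb{C}(Q)$ and sends each projection $p\in\mathcal{A}$ to the corresponding idempotent of $\mathcal{B}$. Since $\Psi$ preserves the multiplicative, involutive, order and lattice structure, it preserves moduli and hence supports; consequently it carries the ideal $\mathcal{I}$ to an ideal $\mathbb{X}=\Psi(\mathcal{I})\subset\mathbb{C}(Q)$ with $s(\mathbb{X})=\Psi(s(\mathcal{I}))$, the solid space $\mathbb{Y}$ to a solid space $\Psi(\mathbb{Y})$ in $L^0_\mathbb{C}(\mathcal{B})$ (solidity being expressed purely through $|\cdot|$ and $\leqslant$), and a monotone quasi-norm on $\mathbb{Y}$ to a monotone quasi-norm on $\Psi(\mathbb{Y})$. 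Because conjugation by $\Psi$ turns a derivation $\delta\colon\mathcal{I}\to\mathbb{Y}$ into a derivation $\Psi\circ\delta\circ\Psi^{-1}$ of the transported data and preserves the inequality $s(\delta(x))\leqslant s(x)$, it sets up a bijection between the band preserving derivations on the two sides.

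Next I would record the two identifications of projection lattices that are needed. A projection of $s(\mathcal{I})\mathcal{A}$ corresponds exactly to an idempotent $e\in\mathcal{B}$ with $e\leqslant s(\mathbb{X})$, so $P(s(\mathcal{I})\mathcal{A})\cong\mathcal{B}_{s(\mathbb{X})}$; likewise, for $p\in\mathcal{A}$ with image $e=\Psi(p)$ one has $P(p\mathcal{A})\cong\mathcal{B}_e$. These are Boolean-algebra isomorphisms, hence they transport $\sigma$-distributivity exactly.

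With these identifications in hand, part (i) is Theorem \ref{version-Kus_complex} applied to $\mathbb{X}$: the algebra $\mathcal{B}_{s(\mathbb{X})}\cong P(s(\mathcal{I})\mathcal{A})$ is $\sigma$-distributive if and only if every band preserving derivation from $\mathbb{X}$ into $L^0_\mathbb{C}(\mathcal{B})$, equivalently from $\mathcal{I}$ into $S(\mathcal{A})$, is trivial. When $\mathcal{A}$ is a commutative von Neumann algebra, $P(\mathcal{A})$ is multinormed, hence so is $\mathcal{B}_{s(\mathbb{X})}$, and Corollary \ref{cor_atomic-distrib} upgrades ``$\sigma$-distributive'' to ``atomic'', giving the parenthetical version. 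Parts (ii) and (iii) follow in the same way from Corollary \ref{main-cor}: translating criterion (i) of that corollary back through $\Psi^{-1}$ replaces the idempotent $e$ by the projection $p$, the condition $e\Psi(\mathbb{Y})=eL^0_\mathbb{C}(\mathcal{B})$ by $p\mathbb{Y}=pS(\mathcal{A})$, and $\mathcal{B}_e$ by $P(p\mathcal{A})$, which yields (ii); while criterion (ii) of that corollary, applied after noting that ``$\mathbb{Y}$ quasi-normed'' and ``$\mathbb{Y}\subset\mathcal{A}$'' transport respectively to ``$\Psi(\mathbb{Y})$ quasi-normed'' and ``$\Psi(\mathbb{Y})\subset\mathbb{C}(Q)$'', yields (iii).

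The proof carries no genuine analytic difficulty; the only point demanding care is the bookkeeping — verifying that every structural notion appearing in the statement (ideal, solid space, support, $e$-extendedness, monotone quasi-norm, band preservation) is faithfully carried by the $*$-isomorphism $\Psi\colon S(\mathcal{A})\to L^0_\mathbb{C}(\mathcal{B})$, and in particular that the two projection-lattice identifications above are correct, so that $\sigma$-distributivity is transported precisely. The only ingredient beyond pure transport is the multinormality of $P(\mathcal{A})$ in the von Neumann case, needed to invoke Corollary \ref{cor_atomic-distrib} for the ``atomic'' reformulation in (i).
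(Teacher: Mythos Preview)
Your proposal is correct and follows essentially the same route as the paper: the paper derives the theorem directly from Theorem \ref{version-Kus} (together with its complex variant), Corollary \ref{cor_atomic-distrib}, and Corollary \ref{main-cor} via the $*$-isomorphisms $\mathcal{A}\cong\mathbb{C}(Q(P(\mathcal{A})))$ and $S(\mathcal{A})\cong L^0_\mathbb{C}(P(\mathcal{A}))$. You have simply made explicit the bookkeeping of transporting ideals, solid spaces, supports, quasi-norms, and band preservation through $\Psi$, which the paper leaves implicit.
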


Give one more illustration of Theorem \ref{main}. Let $(\Omega,\Sigma,\mu)$ be a measurable space with a complete  $\sigma$-finite measure $\mu$, let $L^0(\Omega)$ be the algebra of all measurable real-valued functions defined on $(\Omega,\Sigma,\mu)$, and let $L^\infty(\Omega)$ be the subalgebra of all essentially bounded functions from  $L^0(\Omega)$ (functions that are equal almost everywhere are identified). Denote by $t_\mu$ the topology of convergence locally in measure $\mu$ in $L^0(\Omega)$. Convergence $f_n\xrightarrow{t_\mu} f,f_n,f\in L^0(\Omega)$ means that $f_n\chi_A\rightarrow f\chi_A$ in measure $\mu$ for any $A\in\Sigma$ with $\mu(A)<\infty$. Proposition \ref{der} and density of the subalgebra of step functions in the algebra $L^0(\Omega)$ with respect to the topology $t_\mu$ imply that any  $t_\mu$-continuous derivation \\$\delta\colon L^0(\Omega)\to L^0(\Omega)$ is trivial.

Consider an arbitrary non-zero ideal $X$ in the algebra $L^\infty(\Omega)$ and normed solid space $(Y,\|\cdot\|_Y)$ of measurable functions on $(\Omega,\Sigma,\mu)$ (see e.g. \cite{K-A}, ch.\setcounter{gl}{4}\Roman{gl}, \S 3). The examples of such solid spaces are $L_p$-spaces, $p\geqslant 1$, the Orlicz, Marcinkiewicz spaces, symmetric spaces of measurable functions on  $(\Omega,\Sigma,\mu)$ \cite{K-P-S}. Theorems \ref{version-Kus} and \ref{main} imply the following

\begin{theorem}[compare \cite{B-S_d}]\label{main_measurable_space}
(i). Any band preserving derivation $\delta$ from $X$ into $Y$ is trivial, in particular, if $X$ is a order-dense solid space, then any derivation  $\delta\colon X\to Y$ is trivial;

(ii). If $(\Omega,\Sigma,\mu)$ is continuous measure space, then there exist a non-zero band preserving derivation from $X$ into $L^0(\Omega)$, which is not continuous with respect to the topology $t_\mu$;

(iii). If $(\Omega,\Sigma,\mu)$  is an atomic measure space, then any band preserving derivation from $\delta$ from $X$ into $L^0(\Omega)$ is trivial.
\end{theorem}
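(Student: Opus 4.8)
The plan is to reduce everything to the Boolean framework of Sections~\ref{sec_der}--\ref{sec_main}. First I would pass to the measure algebra $\mathcal{B}$ of $(\Omega,\Sigma,\mu)$ (measurable sets modulo $\mu$-null sets). Since $\mu$ is $\sigma$-finite, $\mathcal{B}$ is a complete Boolean algebra and, replacing $\mu$ by an equivalent finite measure, $\mathcal{B}$ carries a finite strictly positive countably additive measure; hence $\mathcal{B}$ is multinormed. Under the canonical identifications $L^0(\Omega)=L^0(\mathcal{B})$ and $L^\infty(\Omega)=C(Q)$, where $Q$ is the Stone space of $\mathcal{B}$, the ideal $X\subset L^\infty(\Omega)$ becomes an ideal in $C(Q)$, and the normed solid space $(Y,\|\cdot\|_Y)$ becomes a solid space in $L^0(\mathcal{B})$ whose (monotone) norm is a monotone quasi-norm, a norm being a quasi-norm with constant $C=1$. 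Thus $Y$ is a quasi-normed solid space in $L^0(\mathcal{B})$.

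For assertion (i), since $Y$ is a quasi-normed solid space, Theorem~\ref{main} applies verbatim and shows that every band preserving derivation $\delta\colon X\to Y$ is trivial. For the ``in particular'' clause, if $X$ is order-dense then $s(X)=\mathbf{1}$, so by Corollary~\ref{band_pres_der_FIP} every derivation $\delta\colon X\to L^0(\mathcal{B})$ -- in particular every derivation with values in $Y\subset L^0(\mathcal{B})$ -- is automatically band preserving, and hence trivial by the first assertion.

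For (iii), an atomic measure space yields an atomic measure algebra $\mathcal{B}$; the restriction $\mathcal{B}_{s(X)}$ is again atomic, hence $\sigma$-distributive, so Theorem~\ref{version-Kus} gives triviality of every band preserving derivation from $X$ into $L^0(\Omega)$. For (ii), a continuous (atomless) measure space gives a continuous $\mathcal{B}$; being also multinormed, $\mathcal{B}$ is properly non $\sigma$-distributive by Corollary~\ref{cor_atomic-distrib} and the remark following it, so $\mathcal{B}_{s(X)}$ is not $\sigma$-distributive. By Theorem~\ref{version-Kus} there exists a non-zero band preserving derivation $\delta\colon X\to L^0(\Omega)$.

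It remains to check that such a $\delta$ cannot be $t_\mu$-continuous; this is the one step that leaves the purely Boolean framework and is where I expect the main obstacle to lie. By Proposition~\ref{der}, $\delta$ vanishes on every idempotent lying in $X$, hence on every simple function contained in $X$. Given $f\in X\subset L^\infty(\Omega)$, I would approximate $f$ uniformly by simple functions $s_n$ with $|s_n|\leqslant|f|$; solidity of $X$ forces $s_n\in X$, uniform convergence gives $s_n\xrightarrow{t_\mu}f$, and $\delta(s_n)=0$. Were $\delta$ $t_\mu$-continuous, uniqueness of $t_\mu$-limits would yield $\delta(f)=0$ for all $f\in X$, contradicting $\delta\neq 0$. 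Thus the constructed derivation is non-zero, band preserving, and not $t_\mu$-continuous, completing (ii). The delicate point is precisely this approximation argument -- producing dominated simple functions in $X$ that converge to $f$ in $t_\mu$ and exploiting that derivations annihilate idempotents -- the remainder being a direct transcription of the general theorems.
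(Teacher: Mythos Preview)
Your proposal is correct and follows essentially the same route as the paper, which simply states that the theorem is implied by Theorems~\ref{version-Kus} and~\ref{main} (together with the earlier remark that any $t_\mu$-continuous derivation annihilating step functions must be trivial). Your explicit adaptation of that density argument from $L^0(\Omega)$ to the ideal $X$---using solidity of $X$ to keep the dominated simple approximants inside $X$---is exactly the missing detail needed to justify (ii), and it works.
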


Note again, the condition that a derivation $\delta$ is band preserving is necessary for validity of Theorem \ref{main_measurable_space} (i), (iii). Recall that in Example 5.2 it is constructed a non-zero derivation $\delta$ from an ideal $X\subset L^\infty(\Omega)$ with values in a Banach solid space $Y\subset L^\infty(\Omega)$ for an atomic measure space $(\Omega,\Sigma,\mu)$, such that $s(\delta)s(X)=0$ (the last equality means that the derivation $\delta$  is not band preserving (see Theorem \ref{band_preser_der})).


\end{document}